\documentclass{amsart}

\usepackage{amsmath,amssymb,amsfonts,amsthm,mathtools,mathrsfs}
\usepackage[mathscr]{eucal}
\usepackage{cases,empheq,relsize}
\usepackage{color,graphicx,epstopdf,subfigure,tikz}
\usetikzlibrary{arrows,automata,backgrounds,patterns,calc}
\usepackage{pgfplots}
\usepackage[draft]{todonotes}
\usepackage{array,tabularx,varwidth,enumerate,verbatim,xparse}
\usepackage[normalem]{ulem}
\usepackage[linkbordercolor=blue]{hyperref}
\hypersetup{
  colorlinks=false,
  linkcolor=blue,
  citecolor=red,
  filecolor=magenta,
  urlcolor=cyan
}

\numberwithin{equation}{section}
\theoremstyle{plain}
\newtheorem{thm}{Theorem}[section]
\newtheorem{cor}[thm]{Corollary}
\newtheorem{lem}[thm]{Lemma}
\newtheorem{prop}[thm]{Proposition}
\theoremstyle{remark}
\newtheorem{rmk}{Remark}

\newcommand{\supp}{\text{supp\! }}
\newcommand{\dist}{\text{dist\! }}
\newcommand{\R}{\mathbb R}
\newcommand{\Z}{\mathbb Z}
\newcommand{\C}{\mathbb C}
\newcommand{\N}{\mathbb N}
\newcommand{\CK}{\mathcal K}
\newcommand{\fa}{\mathfrak a}

\author[Jeong]{Eunhee Jeong}
\address[Eunhee Jeong]{
  Department of Mathematics Education, and Institute of Pure and Applied Mathematics, Jeonbuk National University, Jeonju 54896, Republic of Korea}
\email{eunhee@jbnu.ac.kr}

\author[Ko]{Hyerim Ko}
\address[Hyerim Ko]{
  Department of Mathematics, and Institute of Pure and Applied Mathematics, Jeonbuk National University, Jeonju, 54896, Republic of Korea}
\email{kohr@jbnu.ac.kr}

\author[Shiraki]{Shobu Shiraki}
\address[Shobu Shiraki]{
  Department of Mathematics, Faculty of Science, University of Zagreb, Bijeni\v{c}ka cesta 30, 100000 Zagreb, Croatia}
\email{shobu.shiraki@math.hr}

\begin{document}
\title[Orthonormal maximal estimates with quadratic potentials]
{
Maximal estimates of the Schr\"odinger operators with quadratic potential for orthonormal systems
}

\keywords{Schr\"odinger equation, Hermite operator, special Hermite operator, orthonormal system, pointwise convergence}
\subjclass[2010]{35B45 (primary); 35P10, 35Q41(secondary).}
\begin{abstract}
We establish an equivalence principle for local mixed-norm estimates for generalized Schr\"odinger propagators acting on orthonormal systems. Under suitable mixed-norm conditions, estimates for generalized propagators follow from the corresponding free Schr\"odinger estimates for every summability exponent $1\le\beta\le2$.
Applications include maximal estimates for Schr\"odinger equations with time-dependent quadratic potentials and for the special Hermite operator, as well as almost everywhere convergence of the associated densities. For the special Hermite operator, we also obtain a single-function maximal estimate that is sharp up to the endpoint.
We construct counterexamples for the Schr\"odinger operator, by combining single particle and many particle constructions in complementary dimensions, yielding new necessary conditions in an intermediate range of  summability exponents.
\end{abstract}

\maketitle

\section{Introduction}

\subsection{Schr\"odinger operators with quadratic potentials}
Let $d\ge1$ and $s\in \R$. For a self-adjoint operator $\mathcal A$ on $L^2(\R^d)$,
we write $u(x,t)=e^{-it\mathcal A}f(x)$ for the solution to the evolution equation
\begin{align}\label{dispersive eq}
i\partial_tu-\mathcal A u=0, \quad u(x,0)=f(x).
\end{align}
Throughout, $H^s(\R^d)$ and $\dot{H}^s(\R^d)$ denote the standard $L^2$-based inhomogeneous and homogeneous Sobolev spaces, respectively.

Motivated by Carleson's pointwise convergence problem for the free Schr\"odinger equation, we are interested in determining the smallest regularity $s\in \R$ for which
\begin{align}\label{pt-single}
\lim_{t\rightarrow0+} e^{-it\mathcal A}f(x) = f(x), \quad \text{a.e.}\,\,x\in\R^d
\end{align}
holds for all $f\in H^s(\R^d)$.
A standard argument shows that such pointwise convergence follows from the maximal estimate
\begin{align}\label{max1}
\big\| \sup_{t\in I_{\mathcal A}} |e^{-it\mathcal A}f| \big\|_{L^2(B_1^d)} \lesssim \|f\|_{H^s(\R^d)}
\end{align}
for the corresponding $s$. Here, $B^d_\rho(x)$ denotes the ball in $\R^d$ of radius $\rho>0$ centered at $x\in \R^d$. When the ball is centered at the origin, we simply write $B_\rho^d$. 

For the Schr\"odinger operator $\mathcal A=-\Delta$, we set $I=I_{-\Delta}=[0,1]$. 
For the other operators considered below, we take $I_{\mathcal A}=[0,T_{\mathcal A}]$, where $T_{\mathcal A}>0$ is chosen according to the operators.

When $\mathcal A=-\Delta$, we recover the free Schr\"odinger operator. In $d=1$,
Carleson \cite{Carleson} proved convergence for $s\geq\frac14$, and Dahlberg--Kenig
\cite{DK82} showed that this is sharp.
In higher dimensions, the problem is considerably more difficult (we refer the reader to \cite{Lee06, Bourgain12, LR17, DGLZ} for the history of the problem and related results).
Bourgain's breakthrough construction \cite{Bourgain16} established the necessary condition $s\ge \frac{d}{2(d+1)}$. The corresponding estimate for every $s>\frac{d}{2(d+1)}$ was subsequently proved by Du--Guth--Li \cite{DGL17} in dimension $d=2$ and by Du--Zhang \cite{DZ19} in higher dimensions.
\begin{thm}[{\cite{Bourgain16, DGL17, DZ19}}]\label{t:max schrodinger single}
Let $d\ge2$. Then
\begin{equation*}
\Big\| \sup_{t \in I} \big| e^{it\Delta} f \big|\Big\|_{L^2(B_1^d)}
\lesssim
\|f\|_{H^s(\R^d)}
\end{equation*}
holds for all $f\in H^s(\R^d)$ whenever $s>\frac{d}{2(d+1)}$. Conversely, if this estimate holds, then $s\geq\frac{d}{2(d+1)}$.
\end{thm}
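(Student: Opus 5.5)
This is a cited theorem, so I would not reprove the deep sufficiency direction from scratch. Instead I would outline how the two directions are obtained in \cite{Bourgain16, DGL17, DZ19}.

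\emph{Sufficiency.} I would first reduce to frequency-localized pieces by a Littlewood--Paley decomposition. Write $f=\sum_{R}f_R$ with $\widehat{f_R}$ supported in $\{|\xi|\sim R\}$, $R$ dyadic. The claim for $s>\frac d{2(d+1)}$ then follows by summing a geometric series from the localized bound
\begin{equation*}
\Big\|\sup_{t\in I}|e^{it\Delta}f_R|\Big\|_{L^2(B_1^d)}\lesssim_\epsilon R^{\frac d{2(d+1)}+\epsilon}\|f_R\|_{L^2}.
\end{equation*}
Next I would rescale parabolically, $x\mapsto Rx$ and $t\mapsto R^2t$. This turns the localized bound into an estimate on $B_R^d\times[0,R]$ for functions with Fourier support in the unit ball. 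The $R^2$ time range can be cut to $R$ by locality: wave packets at unit frequency travel at unit speed.

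The core is a fractal $L^2$ restriction estimate. In Du--Zhang form it says: if $X\subset B_R^{d+1}$ is a union of lattice unit cubes, with at most one cube per vertical column and with $\gamma$-dimensional density bound $\gamma$, then
\begin{equation*}
\|e^{it\Delta}f\|_{L^2(X)}\lesssim \gamma^{\frac1{d+1}}R^{\frac d{2(d+1)}+\epsilon}\|f\|_2 .
\end{equation*}
This is proved by broad--narrow analysis. The broad part uses multilinear refined Strichartz estimates, built from $\ell^2$ decoupling plus the Bennett--Carbery--Tao multilinear restriction theorem. The narrow part is handled by induction on scales via Lorentz rescaling. Applying the estimate with $X$ the set of points where the supremum is attained, which has one cube per column, gives the localized maximal bound. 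I expect this step, specifically closing the induction with only $\gamma^{1/(d+1)}$ loss, to be the main obstacle.

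\emph{Necessity.} I would follow Bourgain's construction. Take $\widehat f$ to be a sum of bumps on a lattice $\{R\,e_1+ R^{\frac{d}{d+1}\cdot\cdot}\Z^{d-1}\text{-type}\}$: frequencies near $Re_1$, spaced in the transverse directions at scale $D\approx R^{\frac{d}{d+1}}$, restricted to a box of size $R$. Gauss-sum/Talbot-effect considerations then show that for every $x$ in a set of measure $\gtrsim1$ in $B_1^d$ there is a time $t(x)\in[0,1]$ at which all phases nearly align. At that time, $|e^{it\Delta}f(x)|$ is comparable to the number of bumps times the normalization. Comparing $\|\sup_t|e^{it\Delta}f|\|_{L^2(B_1)}$ with $R^s\|f\|_2$ and optimizing $D$ yields $s\ge\frac d{2(d+1)}$. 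The delicate points are the number-theoretic choice ensuring that a positive proportion of $x$ is hit, which uses the first coordinate as the time parameter via Galilean transformation, and the bookkeeping of exponents.
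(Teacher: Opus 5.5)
The paper gives no proof of this theorem. It quotes it from \cite{Bourgain16, DGL17, DZ19}, and later uses only Bourgain's example, in the normalized form of Lemma~\ref{lem:Bourgain}. So deferring to the literature matches the paper. Your sufficiency outline is a fair summary of Du--Zhang, whose argument also covers $d=2$; Du--Guth--Li instead prove an $L^3$ maximal bound via polynomial partitioning and decoupling. Two small corrections there. First, the reduction from $[0,R^2]$ to $[0,R]$ is Lee's time-localization lemma, the single-function analogue of Section~\ref{sec:5}. It needs the annular support $|\xi|\sim1$ from the Littlewood--Paley pieces, not just $|\xi|\lesssim1$, since slow wave packets stay in $B_R$ for times much longer than $R$. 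Second, the rescaling for the paraboloid is parabolic, not Lorentz.

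The necessity sketch, however, misstates the key mechanism, and as written it proves too much. It is not true that for $x$ in a set of measure $\gtrsim1$ some time makes all phases nearly align, so that $|e^{it\Delta}f(x)|$ equals the number of bumps times the common amplitude.
\begin{itemize}
\item \textbf{Why alignment fails.} The transverse factor is essentially the Weyl sum $\sum_{|m|\le N}e^{i(Dx'\cdot m-tD^2|m|^2)}$ with $N=R/D$. It has size $\gtrsim N^{d-1}$ only when $tD^2/2\pi$ and $Dx'/2\pi$ lie within $O(N^{-2})$ and $O(N^{-1})$ of rationals with bounded denominator. That happens only for $x'$ in a set of relative measure $O(N^{-(d-1)})$, and those points alone give just $s\ge\frac14$.
\item \textbf{Why it would prove too much.} Take unit transverse bumps and longitudinal width $L\approx R^{1/2}$, the largest allowed by $tL^2\lesssim1$ at $t\sim R^{-1}$. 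Full alignment on a set of measure $\gtrsim1$ would give $R^s\gtrsim(N^{d-1}L)^{1/2}$. For $d=2$ with your $D=R^{2/3}$ this is $s\ge\frac5{12}>\frac13$, contradicting the first half of the theorem.
\end{itemize}

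What Bourgain exploits is \emph{partial} coherence.
\begin{itemize}
\item For $x_1\in(0,1)$ the admissible times form a window $|x_1-2Rt|\lesssim L^{-1}$; this is how $x_1$ acts as the time parameter.
\item A Dirichlet-type counting argument shows that a positive proportion of $x'$ have $Dx'/2\pi$ within $O(N^{-1})$ of $q^{-1}\Z^{d-1}$ for some $q\sim Q:=N^{(d-1)/d}$.
\item One then chooses $t$ in the window with $tD^2/2\pi$ within $O(N^{-2})$ of some $a/q$, which requires $D^2/(RL)\gtrsim Q^{-1}$.
\item The transverse sum is then a quadratic Gauss sum of size $\sim N^{d-1}q^{-(d-1)/2}$. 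So the gain over $\|f\|_2$ is $L^{1/2}N^{\frac{d-1}{2}}Q^{-\frac{d-1}{2}}=L^{1/2}N^{\frac{d-1}{2d}}$, not $(N^{d-1}L)^{1/2}$.
\item Optimizing under the two constraints gives $L\approx R^{1/2}$, $N\approx R^{\frac{d}{2(d+1)}}$ and $D\approx R^{\frac{d+2}{2(d+1)}}$. This $D$ agrees with your $R^{\frac d{d+1}}$ only when $d=2$. The resulting lower bound is $R^{\frac d{2(d+1)}}$, hence $s\ge\frac d{2(d+1)}$.
\end{itemize}
The $q^{-(d-1)/2}$ loss and the choice $Q\approx N^{(d-1)/d}$ are exactly what your sketch is missing.
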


The free Schr\"odinger operator serves as the model for the more general operators considered in this paper.
Following \cite{CLV}, we also consider Schr\"odinger equations with time dependent quadratic potentials.
Let $\omega = (\omega_1,\dots, \omega_d) \in C^1(\R)$ be real-valued function. Define the time-dependent Hamiltonian
\begin{equation*}
\mathcal H_{\omega} = -\frac12\Big(\Delta -\sum_{j=1}^d \omega_j(t)x_j^2\Big).
\end{equation*}
When $\omega_j\equiv1$ for all $j=1,\dots,d$, the potential is time independent and $2\mathcal H_{\omega}$ becomes the Hermite operator
\[
\mathcal{H}
=-\Delta+|x|^2
\]
where $x=(x_1,\dots, x_d)$. 
The Hermite operator $\mathcal H$ has discrete spectrum $2\mathbb N_0+d$, where $\mathbb N_0=\mathbb N\cup\{0\}$.
The pointwise convergence problem for $e^{-it\mathcal H}f$ was studied by Cowling \cite{Cowling}, Yajima \cite{Yajima}, and Sj\"ogren--Torrea \cite{ST}. In particular, Sj\"ogren--Torrea established a pointwise identity\footnotemark relating $e^{-it \mathcal H}f$ to the Schr\"odinger propagator $e^{it\Delta}f$. Using this identity, Lee--Rogers \cite{LR} established an equivalence between the corresponding local maximal estimates \eqref{max1} for the Hermite operator $\mathcal H$ and Schr\"odinger operator.  

\footnotetext{More precisely, Sj\"ogren--Torrea \cite{ST} proved that $e^{-i\frac{\arctan t}{2}\mathcal H}f(x)=e^{-\frac{it}2|x|^2}(1+t^2)^{d/4}e^{i\frac t2\Delta}f(\sqrt{1+t^2}x)$ for $t\in(0,\infty)$.}

Cho--Lee--Vargas \cite{CLV} extended this equivalence to time-dependent quadratic potentials. On a sufficiently small time interval $[0,T_{\mathcal H_\omega}]$, the local maximal estimates for $e^{-it\mathcal H_\omega}$ hold for $s>s_0$ if and only if the corresponding Schr\"odinger estimates hold for the range $s>s_0$.

\subsection{Maximal estimates for the special Hermite operator}
We also consider the special Hermite operator, or twisted Laplacian, on $\C^d\cong\R^{2d}$, defined by
\[
\mathcal L = -\sum_{j=1}^d\Big[ \Big(\frac{\partial}{\partial x_j} -\frac12 iy_j \Big)^2 +\Big(\frac{\partial}{\partial y_j}+\frac12 ix_j\Big)^2\Big]
\]
where $x,y \in \R^d$, and we write $z=(x,y)\in\mathbb R^{2d}$.
The operator $\mathcal L$ is self-adjoint and has the same discrete spectrum $2\mathbb N_0+d$ as the Hermite operator $\mathcal H$.
In what follows, we consider the operator $e^{-it\mathcal L}f$, which is the solution to \eqref{dispersive eq} associated with $\mathcal A=\mathcal L$.

We first present a single-function maximal estimate for the special Hermite operator, which appears to be new.
\begin{thm}\label{thm:twisted-single}
Let $d\ge1$ and $n=2d$. Fix $0<T_{\mathcal L}<\pi/2$ and set $I_{\mathcal L}=[0,T_{\mathcal L}]$.
Then
\begin{equation*}
    \big\| \sup_{t\in I_{\mathcal L}} \big| e^{-it\mathcal L} f \big| \big\|_{L_z^2(B_1^n)}
    \lesssim
    \|f\|_{H^s(\R^n)}
\end{equation*}
holds for all $f\in H^s(\R^n)$ with $s>\frac{n}{2(n+1)}$.
\end{thm}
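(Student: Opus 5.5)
Our plan is to reduce Theorem~\ref{thm:twisted-single} to the free Schr\"odinger estimate of Theorem~\ref{t:max schrodinger single} in dimension $n=2d$. We will use an explicit transformation of Sj\"ogren--Torrea type that expresses $e^{-it\mathcal L}$ through $e^{i\tau\Delta}$ on $\R^{n}$.

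The starting point is the Mehler-type kernel of $e^{-it\mathcal L}$. Since $\mathcal L$ is the twisted convolution analogue of $\mathcal H$, for $0<t<\pi/2$ one has
\[
e^{-it\mathcal L}f(z)=c_d\,(\sin t)^{-d}\int_{\R^{2d}} e^{\frac{i}{4}\cot t\,|z-w|^2}\,e^{\frac{i}{2}\,\omega(z,w)}\,f(w)\,dw,
\]
where $\omega(z,w)=x\cdot v-y\cdot u$ for $z=(x,y)$, $w=(u,v)$ is the symplectic form. Here $c_d$ is a unimodular constant times a power of $\pi$. We will verify this formula either from the Laguerre expansion or directly: the quadratic phase solves the Hamilton--Jacobi equation, and the symplectic phase accounts for the magnetic term.

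Next we compare this with the free kernel $(4\pi i\tau)^{-d}e^{i|z-w|^2/(4\tau)}$ on $\R^{2d}$, setting $\tau=\tan t$. Expanding the phase, $\frac14\cot t\,|z-w|^2=\frac{1}{4\tan t}|z-w|^2+\frac14(\cot t-\frac{1}{\tan t})|z-w|^2$, and the correction vanishes identically. So the only discrepancies are the prefactor $(\sin t)^{-d}$ versus $(\tan t)^{-d}$ and the twist. The prefactor ratio is $(\cos t)^{-d}$, which is bounded on $I_{\mathcal L}$. The twist factors as $e^{\frac i2\omega(z,w)}=e^{\frac i2 \omega(z,w-z)}$, because $\omega(z,z)=0$.

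The key step is to remove the twist. Write $w=z+h$. The twist $e^{\frac i2\omega(z,h)}$ is a modulation in $h$ with frequency $\frac12 Jz$, where $J$ is the standard symplectic matrix. Completing the square in $h$, we get
\[
\tfrac{\cot t}{4}|h|^2+\tfrac12 Jz\cdot h=\tfrac{\cot t}{4}\,\big|h+\tan t\,Jz\big|^2-\tfrac{\tan t}{4}|z|^2,
\]
using $|Jz|=|z|$. Substituting back, this yields the identity
\[
e^{-it\mathcal L}f(z)=(\cos t)^{-d}\,e^{-\frac i4\tan t\,|z|^2}\,e^{i\tan t\,\Delta}f\big(z-\tan t\,Jz\big),
\]
which is the analogue of the identity of \cite{ST}. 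The main obstacle is that the evaluation point $R_t z=(I-\tan t\,J)z$ depends on $t$. This is a linear map, but it is not fixed. We handle it by noting that $I-\tan t\,J=\sqrt{1+\tan^2t}\,Q_t$ with $Q_t$ a rotation in each $(x_j,y_j)$ plane. Since $e^{i\tau\Delta}$ commutes with rotations, $e^{i\tan t\Delta}f(R_tz)=e^{i\tan t\Delta}(f\circ Q_t)(\sec t\, z)$.

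The remaining difficulty is that the rotation $Q_t$ still depends on $t$, acting on $f$. This is handled more cleanly by conjugating instead with the symplectic rotation $e^{-itN}$, where $N=i(x\cdot\nabla_y-y\cdot\nabla_x)$ is the rotation generator. This generator commutes with $\mathcal L$, and the twisted Laplacian satisfies $\mathcal L=-\Delta+\frac14|z|^2+N$. Hence $e^{-it\mathcal L}=e^{-itN}e^{-it(-\Delta+|z|^2/4)}$. Now $e^{-itN}$ is just a rotation $z\mapsto Q_t z$ acting on the output. Therefore
\[
\sup_{t}|e^{-it\mathcal L}f(z)|\le\sup_t\big|e^{-it\mathcal H'}f(Q_tz)\big|\le \sup_{t}\sup_{|z'|=|z|}|e^{-it\mathcal H'}f(z')|,
\]
where $\mathcal H'$ is a rescaled Hermite operator on $\R^{2d}$. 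Rather than taking the crude supremum over the sphere, we keep the exact composition. Applying the Sj\"ogren--Torrea identity to $e^{-it\mathcal H'}$, the combined evaluation point is $\sec t\,Q_tz$, which stays in a ball $B_C^n$ for $z\in B_1^n$. The maximal function over the family of points $\{(\sec t\,Q_t z,\tan t)\}$ is then controlled by the maximal estimate along a curve. Such an estimate follows from the free estimate of \cite{DGL17,DZ19} provided we apply its local-in-time, Lipschitz-curve version, where convergence along $(\gamma_z(\tau),\tau)$ with $|\gamma_z(\tau)-\gamma_z(\tau')|\lesssim|\tau-\tau'|$ holds for the same $s>\frac{n}{2(n+1)}$. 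This step, justifying that the Lipschitz rotation does not worsen the threshold via a change of variables and a frequency-localized Lipschitz-curve argument at scale $\lambda^{-1}$, is where I expect the real work to lie.
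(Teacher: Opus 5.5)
Your derivation of the pointwise identity is correct and coincides with the paper's \eqref{id-special} (note $(\cos t)^{-d}=(1+\tan^2t)^{d/2}$). The gap is the last step, which you defer as ``where the real work lies''. You need a maximal bound along the moving evaluation points $(I_{2d}-\tan t\,J)z$ at the same threshold $s>\frac{n}{2(n+1)}$, and this is not contained in \cite{DGL17,DZ19} as stated. The required transference is exactly the Cho--Lee--Vargas equivalence theorem (Theorem~\ref{thm:CLV}). Its proof is the single-function analogue of the superlocal time localization and Fourier-series expansion of the phase error in Sections~\ref{sec:4}--\ref{sec:5}, and the paper simply invokes it.

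Concretely, set $\fa(t)=(\tan t,\dots,\tan t)$ and $\zeta_t(z)=z-(\tan t)Jz$. Then $e^{i\tan t\Delta}f(\zeta_t(z))=S_\fa^\zeta f(z,t)$, so $|e^{-it\mathcal L}f(z)|\le(\cos T_{\mathcal L})^{-d}|S_\fa^\zeta f(z,t)|$ on $I_{\mathcal L}$. Next check the three hypotheses:
\begin{itemize}
\item \eqref{deriv-a} follows from $\fa_j'(t)=\sec^2t\ge1$;
\item the bi-Lipschitz condition \eqref{lip-const} follows from $|\zeta_t(z)-\zeta_t(w)|=\sec t\,|z-w|$, since $(I_{2d}-\tau J)^{\top}(I_{2d}-\tau J)=(1+\tau^2)I_{2d}$;
\item \eqref{zeta-holder} follows from $|\zeta_t(z)-\zeta_{t'}(z)|\le\sec^2(T_{\mathcal L})|t-t'|$ for $|z|\le1$.
\end{itemize}
Finally, rescale time as in Remark~\ref{rmk:normalize time} and apply Theorem~\ref{thm:CLV} with $(q,r)=(2,\infty)$ and $s_0=\frac{n}{2(n+1)}$, combined with Theorem~\ref{t:max schrodinger single}. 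The bi-Lipschitz condition in $z$, not only Lipschitz dependence on $t$, is a hypothesis you must verify, because the argument changes variables in space on each superlocal interval.

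You should drop your detour through the rotation generator $N$, because it gains nothing. Composing the rotation with the Sj\"ogren--Torrea identity for $-\Delta+\frac14|z|^2$ returns the same point $\sec t\,Q_tz=(I_{2d}-\tan t\,J)z$. The Hermite identity itself already evaluates at a $t$-dependent point, so this $t$-dependence is intrinsic and is handled directly by Theorem~\ref{thm:CLV}. As written, the detour also has a sign slip: expanding the definition of $\mathcal L$ gives $\mathcal L=-\Delta+\frac14|z|^2-N$ for your $N=i(x\cdot\nabla_y-y\cdot\nabla_x)$. Lastly, you do not need the reparametrization $\tau=\tan t$, since $S_\fa^\zeta$ allows a general $\fa$ with $\fa'\ge A_0>0$.
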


By the standard density argument described above, Theorem \ref{thm:twisted-single} implies \eqref{pt-single} with $\mathcal A=\mathcal L$ and $d$ replaced by $n$, for every $f\in H^s(\R^n)$ with $s>\frac{n}{2(n+1)}$.

To prove Theorem \ref{thm:twisted-single}, we derive the pointwise identity \eqref{id-special} between the special Hermite and free Schr\"odinger propagators, analogous to the relation of
Sj\"ogren--Torrea \cite{ST} for the Hermite operator.
By the equivalence theorem of \cite{CLV}, this identity gives an equivalence between the local mixed-norm estimates for the two propagators. The sufficient bound in Theorem \ref{thm:twisted-single} then follows from Theorem \ref{t:max schrodinger single}. Conversely, this equivalence and Bourgain's counterexample \cite{Bourgain16} give the necessary condition $s\ge\frac{n}{2(n+1)}$. Thus the regularity is sharp up to the endpoint.

The same pointwise identity also yields an exact equality between the corresponding Strichartz norms on the Schr\"odinger-admissible line. See Proposition \ref{prop:sp-stri}.

\subsection{Orthonormal generalization}
The maximal estimates discussed so far concern a single initial datum $f$ and may be viewed as single-particle estimates. The dynamics of infinitely many fermions naturally lead to estimates for orthonormal systems of initial data \cite{LS14, LS15, CHP17, CHP18}. We therefore consider orthonormal analogues of the preceding maximal estimates. 
For the free Schr\"odinger operator, we are interested in estimates of the form 
\begin{align}\label{sup-schr}
\Big\| \sup_{t\in I} \Big| \sum_j \lambda_j |e^{it\Delta}f_j|^2 \Big| \Big\|_{L_x^{\frac q2}(B_1^d)}
\lesssim \|\lambda\|_{\ell^\beta}
\end{align}
for $q\in[2,\infty]$ and $\beta\in[1,\infty]$. Here, $(f_j)$ forms an orthonormal system in $H^s(\R^d)$ and $\lambda=(\lambda_j)$ is a sequence of complex numbers.
Thus $(\langle D\rangle^s f_j)$ is orthonormal in $L^2(\R^d)$. For $\beta=1$, the triangle inequality reduces \eqref{sup-schr} to the corresponding single function maximal estimate in $L_x^q(B_1^d)$, which is \eqref{max1} with $\mathcal A=-\Delta$ when $q=2$.
Thus the estimate \eqref{sup-schr} is of particular interest when $\beta>1$,
and one aims to determine the largest value of $\beta$ for each $s$.

Motivated by the pointwise convergence problem for density functions associated
with infinitely many fermions, which will be discussed in the next subsection,
Bez--Lee--Nakamura \cite{BLN_Selecta} established the following sharp estimate at the critical regularity
$s=\frac14$ in one dimension: 
for every orthonormal system $(f_j)\subset \dot{H}^{\frac14}(\R)$ and every sequence $\lambda=(\lambda_j)\in \ell^\beta$, the estimate
\begin{align*}
\Big\| \sup_{t\in\R} \Big| \sum_j \lambda_j |e^{it\partial_x^2}f_j|^2 \Big| \Big\|_{L_x^{2,\infty}(\R)}
\lesssim \|\lambda\|_{\ell^\beta}
\end{align*}
holds if and only if $\beta<2$.
Bez, Kinoshita and the third author \cite{BKS_TLMS} subsequently established corresponding
strong-type local maximal estimates for $s\in[\frac d4,\frac d2)$ in higher dimensions.

For the applications below, we restrict attention to the diagonal case $q=2\beta$.
For $1\le \beta \le \infty$, set
\begin{align}\label{s0nbeta}
s(d,\beta)=\max\Big\{ \frac d2\Big(1-\frac1\beta\Big), \,\frac{d}{2(d+1)}\Big(d-\frac{d-1}\beta\Big)\Big\}.
\end{align}

\begin{thm}[{\cite{BKS_TLMS, Carleson, DGL17, DZ19}}]\label{thm:max}
Let $d\geq1$, $\beta\in [1,\infty]$, and $s>s(d,\beta)$, and set $q=2\beta$. Then \eqref{sup-schr}
holds for all orthonormal systems $(f_j)\subset H^s(\R^d)$ and all sequences $\lambda =(\lambda_j)\in \ell^\beta$.
\end{thm}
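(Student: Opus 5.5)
Theorem \ref{thm:max} is a composite of known results, and I would prove it by interpolating between a few endpoint estimates for the density operator
\[
\rho_\lambda(x,t)=\sum_j\lambda_j|e^{it\Delta}f_j|^2 .
\]
The two expressions in $s(d,\beta)$ correspond to two interpolation lines meeting at a corner. The three natural endpoints are the following.

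\begin{itemize}
\item[(i)] $\beta=1$, $q=2$. Here the triangle inequality reduces \eqref{sup-schr} to the single-function maximal estimate for $s>\frac{d}{2(d+1)}$. This is Theorem \ref{t:max schrodinger single} for $d\ge2$ and Carleson \cite{Carleson} for $d=1$ (with $s>\frac14$). Note that $s(d,1)=\frac{d}{2(d+1)}$.
\item[(ii)] $\beta=\infty$, $q=\infty$. For $s>\frac d2$ the trivial bound
\[
\sum_j|\lambda_j||e^{it\Delta}f_j(x)|^2\le\|\lambda\|_\infty\sum_j|\langle e^{it\Delta}\langle D\rangle^{-s}\delta_x,\langle D\rangle^{s}f_j\rangle|^2\le\|\lambda\|_\infty\|\langle D\rangle^{-s}\delta_x\|_{L^2}^2
\]
holds by Bessel's inequality. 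The right-hand side is finite and uniform in $x,t$, so (ii) holds, and $s(d,\infty)=\frac d2$ is consistent.
\item[(iii)] An intermediate endpoint, from the Bez--Kinoshita--Shiraki result \cite{BKS_TLMS}: the orthonormal maximal estimate in $L^{\beta}_x$ (i.e.\ $q=2\beta$) for $s>\frac d2(1-\frac1\beta)$ in the range $s\in[\frac d4,\frac d2)$. The first expression in $s(d,\beta)$ is exactly this.
\end{itemize}

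Both expressions equal $\frac d2(1-\frac1\beta)$ when $\beta=\frac{d+1}{2}$. On $[1,\frac{d+1}2]$ the second (affine in $1/\beta$) expression dominates, and beyond it the first dominates. So for $\beta\ge\frac{d+1}2$ I cite \cite{BKS_TLMS} directly, after checking that its range covers all $\beta\ge\frac{d+1}{2}$. If it covers only $s\ge\frac d4$, i.e.\ $\beta\ge2$, and $\frac{d+1}2<2$ (so $d\le2$), then I fill the gap by the interpolation below.

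For $1\le\beta\le\frac{d+1}2$ I interpolate between (i) and the estimate at the corner $\beta_*=\frac{d+1}2$, $s>\frac{d(d-1)}{2(d+1)}$. This requires complex interpolation in the Schatten/weighted framework, as in Frank--Sabin type arguments. I would reformulate \eqref{sup-schr} as
\[
\|W\,T_sT_s^*\,\overline W\|_{\mathfrak S^{\beta'}}\lesssim\|W\|^2_{L^{2(q/2)'}_xL^2_t\text{-type}},
\]
with $T_s=e^{it\Delta}\langle D\rangle^{-s}$ and the supremum in $t$ linearized by a measurable $t(x)$. Then I interpolate the analytic family
\[
z\mapsto W^{z}\,e^{it(x)\Delta}\langle D\rangle^{-2s(z)}e^{-it(y)\Delta}\,\overline{W}^{z},
\]
where $s(z)$ is affine in $z$, between $\mathfrak S^\infty$ (from $\beta=1$) and $\mathfrak S^{\beta_*'}$. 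Since $s$ and $1/\beta$ vary affinely, the interpolated condition is exactly $s>$ the affine interpolant, which is the second expression.

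The main obstacle is the $\mathfrak S^{\infty}$ endpoint along imaginary lines. One needs the single-function maximal bound for $\langle D\rangle^{-s-i\tau}$ with at most polynomial growth in $\tau$. This follows because $\langle D\rangle^{i\tau}$ is unitary on $L^2$ and the maximal estimate is applied to $\langle D\rangle^{i\tau}f$. The linearization $t(x)$ is fixed throughout, so the family stays analytic. Dealing with the $\epsilon$-losses (the strict inequalities) is routine: interpolate from slightly larger $s$ at each endpoint.
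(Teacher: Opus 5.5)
\textbf{The corner is misplaced.} Your interpolation scheme breaks down at the choice of corner. Both expressions in \eqref{s0nbeta} are affine in $u=1/\beta$, and
\[
\frac d2\,(1-u)=\frac{d}{2(d+1)}\bigl(d-(d-1)u\bigr)\iff u=\frac12 .
\]
So for every $d$ they cross at $\beta=2$, $s=\frac d4$. The second expression governs $[1,2]$ and the first governs $[2,\infty]$. At your $\beta_*=\frac{d+1}{2}$ they agree only when $d=3$. For $d\neq3$ the step ``interpolate (i) with the corner'' fails.
\begin{itemize}
\item For $d\le2$ your corner estimate is false. For $d=1$ it reads $\beta=1$, $s>0$, which contradicts \cite{DK82}. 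For $d=2$ it reads $\beta=\frac32$, $s>\frac13$, but Theorem~\ref{thm:ex} forces $s\ge\sigma(2,\frac32)=\frac5{12}$.
\item For $d\ge4$ one has $\beta_*>2$, so the corner bound is true. But $s(d,\cdot)$ is convex in $1/\beta$, so the chord from $\beta=1$ to $\beta_*$ lies strictly above $s(d,\beta)$ on $(1,\beta_*)$. For example, for $d=4$, $\beta=2$ it gives only $s>\frac{16}{15}$ instead of $s>1$.
\end{itemize}
The remedy is to take $\beta=2$, $s>\frac d4$ as the corner. Interpolate (i) with it on $[1,2]$, and interpolate it with your (ii) on $[2,\infty]$. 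This is exactly what the paper does.

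\textbf{The corner input cannot simply be cited.} \cite{BKS_TLMS} state their theorem with the $L^1_xL^\infty_t$ norm, i.e.\ $q=2$. On $B_1^d$ an $L^\beta_x$ bound is stronger than an $L^1_x$ bound, so item (iii) cannot be quoted in the $q=2\beta$ form. What you actually need is only the $\beta=2$, $q=4$ estimate. The paper extracts it from the proof in \cite{BKS_TLMS}: the pieces $\widetilde T_{k,l}$ of $\widetilde T_k$ satisfy $\mathcal C^2$ bounds with $L^4_xL^2_t$ weights, and these sum over $l$ to $k\,2^{(d/2-2s)k}$. That is a frequency-localized $\beta=2$ bound with a logarithmic loss. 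The paper then interpolates the frequency-localized bounds at $\beta=1,2,\infty$ via Lemma~\ref{lem:dual}. It sums over $k$ using Lemma~\ref{lem:TB-T1} and Minkowski's inequality, and the strict inequality $s>s(d,\beta)$ absorbs the loss.

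Your variant interpolates unlocalized estimates in $s$ through an analytic family containing $\langle D\rangle^{-s(z)}$, with $\varepsilon$ room at each endpoint. It is equally valid once the corner is moved to $\beta=2$; the paper uses the same mechanism in Section~\ref{sec:proof:on-gen} for $\beta\in[2,\infty]$. The range $\beta>2$ of (iii) then follows from the $\beta=2$ endpoint and (ii), with no further citation needed.
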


For $d\ge2$, we can rewrite $s>s(d,\beta)$ as
$\beta \in \big[1, \frac{d(d-1)}{d^2-2(d+1)s}\big)$ when $\frac{d}{2(d+1)}<s<\frac d4$, while
$\beta \in \big[1,\frac{d}{d-2s}\big)$ when $\frac d4\le s<\frac d2$. If $\frac d4<s<\frac d2$, the range of $\beta$ is sharp up to the endpoint by Theorem \ref{thm:ex}.\footnotemark
\footnotetext{Although the upper bound $\beta \le \frac{d}{d-2s}$ in \cite{BKS_TLMS} is stated only for $q=2$, the same argument applies to all $q\ge2$.}
The estimates of \cite[Theorem 2.2]{BKS_TLMS} are stated with the $L_x^1L_t^\infty$ norm. Their proof, in particular \cite[(6.13) and (6.14)]{BKS_TLMS}, also gives the frequency-localized bounds with the $L_x^\beta L_t^\infty$ norm needed here. Interpolation with the single-function maximal estimates and summation over frequencies yield Theorem \ref{thm:max}; see Appendix \ref{sec:known} for details.

To complement the sufficient condition in Theorem \ref{thm:max}, for $1\le l \le d-1$ we combine a single-particle example in $l$ variables with the many-particle example of \cite{BKS_TLMS} in the remaining $d-l$ variables. The single-particle example is given by Bourgain for $l\ge2$ and by Dahlberg--Kenig for $l=1$. For $1\le \beta \le \infty$, set
\begin{align}\label{nec range}
\sigma(d,\beta):=\max_{l\in \{0,1,\dots,d\}}\Big\{ \frac{l}{2(l+1)}+\frac{d-l}2\big(1-\frac 1\beta\big)\Big\}
\end{align}
\begin{thm}\label{thm:ex}
Let $d\ge1$, $1\le \beta \le \infty$, and $s\in \R$. Suppose there exists a constant $C>0$ such that
\begin{align}\label{beta-beta}
\Big\| \sup_{t\in I} \Big|\sum_j \lambda_j \big|e^{it\Delta}f_j\big|^2\Big| \Big\|_{L_x^\beta(B_1^d)}
\leq C\|\lambda\|_{\ell^\beta}
\end{align}
holds for all orthonormal families $(f_j) \subset H^s(\R^d)$ and all sequences $(\lambda_j) \in \ell^\beta(\C)$. Then $s\ge \sigma(d,\beta)$.
\end{thm}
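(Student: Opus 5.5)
We fix $l\in\{0,1,\dots,d\}$ and show $s\ge \frac{l}{2(l+1)}+\frac{d-l}{2}(1-\frac1\beta)$; maximizing over $l$ then gives $s\ge\sigma(d,\beta)$. Write $x=(x',x'')\in\R^l\times\R^{d-l}$. The construction is a tensor product: the orthonormal family will be $f_j(x)=g(x')h_j(x'')$, where $g$ is a single frequency-$N$ datum in $\R^l$ witnessing the sharpness of the single-function maximal estimate, and $(h_j)$ is the many-particle orthonormal family of \cite{BKS_TLMS} in $\R^{d-l}$. Since $e^{it\Delta}$ factorizes, $e^{it\Delta}f_j=e^{it\Delta'}g\cdot e^{it\Delta''}h_j$. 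Orthonormality of $(f_j)$ in $L^2$ follows from that of $(h_j)$ once $\|g\|_2=1$. Because all frequencies are comparable to $N$, the $H^s$ normalization costs a factor $N^{-2s}$ in the density.

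For the first factor I take, for $l\ge2$, Bourgain's example \cite{Bourgain16} and, for $l=1$, the Dahlberg--Kenig example \cite{DK82}. Both are $L^2$-normalized functions $g_N$ with frequency support in $\{|\xi'|\sim N\}$, together with a set $E_N\subset B^l_1$ of measure $\gtrsim1$ and times $t(x')\in[0,N^{-1}]$ or so, such that $|e^{it(x')\Delta'}g_N(x')|^2\gtrsim N^{\frac{l}{l+1}}$ on $E_N$. This is exactly the content of the necessity part of Theorem \ref{t:max schrodinger single} (and of the case $d=1$). For the second factor I take the \cite{BKS_TLMS} family, which I recall as follows. We tile the frequency ball $\{|\xi''|\lesssim N\}$, or an annulus at scale $N$, into about $N^{d-l}$ unit cubes, let the $h_j$ be modulated bumps $\widehat{h_j}=\chi(\cdot-k_j)$, and set $\lambda_j=1$ for $j\le M\sim N^{d-l}$. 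Then $\sum_j|e^{it\Delta''}h_j|^2\gtrsim N^{d-l}$ on a neighborhood of $x''=0$ of size $\sim1$ for all $|t|\lesssim N^{-1}$. This uses that each $|e^{it\Delta''}h_j|$ stays $\sim1$ on the unit ball for $|t|\lesssim N^{-1}$, since the group velocities $\sim N$ move the bump a distance $O(1)$. Also $\|\lambda\|_{\ell^\beta}=N^{(d-l)/\beta}$.

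Combining, on $E_N\times B^{d-l}_{c}$ we choose $t=t(x')$, which is allowed because the second factor is uniformly large for $t\lesssim N^{-1}$. This gives $\sup_t\sum_j|e^{it\Delta}f_j|^2\gtrsim N^{\frac l{l+1}}N^{d-l}$. After normalizing to $H^s$ we obtain $N^{-2s}N^{\frac l{l+1}+d-l}\lesssim N^{(d-l)/\beta}$ from \eqref{beta-beta}, since the $L^\beta(B_1^d)$ norm of the left side is comparable to its value on a set of measure $\sim1$. Letting $N\to\infty$ gives $2s\ge \frac{l}{l+1}+(d-l)(1-\frac1\beta)$, which is the claim. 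The cases $l=0$ (pure \cite{BKS_TLMS}) and $l=d$ (pure single-particle, using the triangle inequality in reverse with a single $j$) are included.

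The main obstacle is the compatibility of time scales. The single-particle examples attain their maximum at times $t(x')$ that must lie in the window where the many-particle density stays large. In Bourgain's example the relevant times are of size up to $\sim1$ rather than $N^{-1}$, if I recall the rescaled form correctly. If so, I would rescale the second factor: take the $h_j$ with frequencies in a box of side $N^{1/2}$ or so (or Galilean-translate) so that the density remains $\gtrsim$(number of particles) over the whole time interval $I$. I would then recount the exponents, possibly using Galilean invariance to recentre the wave packets along the time-dependent path. I would verify in the \cite{BKS_TLMS} construction that the exponent $\frac{d-l}2(1-\frac1\beta)$ survives this adaptation. I expect this works because that exponent is itself derived for the time interval $I=[0,1]$ from a time-independent mechanism: the density is large already at $t=0$, and the supremum only needs one good time.
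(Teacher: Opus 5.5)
Your proposal is correct and is essentially the paper's proof. The paper uses the same tensor product of the Bourgain/Dahlberg--Kenig datum in $l$ variables with the modulated bumps $e^{ij\cdot x''}\psi(x'')$, $j\in\Z^{d-l}$ in an annulus of radius $\sim N$, takes $\lambda_j=1$, and gets the same exponent count. The time-scale worry in your last paragraph does not arise: in the frequency-$N$, unit-ball normalization of Bourgain's example the good times satisfy $0<t(x')\lesssim N^{-1}$, which is how the paper states it in Lemma~\ref{lem:Bourgain}. That is exactly the window in which every bump stays $\gtrsim 1$ on $B_{1/2}^{d-l}$, so your main argument goes through as written and no rescaling of the many-particle family is needed.
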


The extreme choices $l=d$ and $l=0$ in \eqref{nec range} recover, respectively, the single-particle lower bound $s\ge \frac{d}{2(d+1)}$ of Bourgain \cite{Bourgain16} (\cite{DK82} when $d=1$) and the many-particle lower bound $s\ge \frac d2(1-\frac1\beta)$ of Bez, Kinoshita and the third author \cite{BKS_TLMS}.

For $2\le\beta\le\infty$, one has $s(d,\beta)=\sigma(d,\beta)=\frac d2(1-\frac1\beta)$. Therefore, Theorems~\ref{thm:max} and \ref{thm:ex} give matching sufficient and necessary regularity bounds up to the endpoint. \eqref{beta-beta} holds for $s>\frac d2(1-\frac1\beta)$ and can hold only if $s\ge\frac d2(1-\frac1\beta)$.
For $d\ge2$, the intermediate choices $l\in\{1,\dots,d-1\}$ give a strictly stronger necessary condition than the two extreme choices precisely when
\[
\frac{d(d+1)}{d(d+1)-1}<\beta<2.
\]
For example, when $d=2$, we have
\begin{equation*}
\sigma(2,\beta)
=
\begin{cases}
\dfrac13,
& 1\leq\beta\leq\dfrac65,\\[2mm]
\dfrac34-\dfrac1{2\beta},
& \dfrac65\leq\beta\leq2,\\[2mm]
1-\dfrac1\beta,
& 2\leq\beta\leq\infty,
\end{cases}
\qquad
s(2,\beta)
=
\begin{cases}
\dfrac23-\dfrac1{3\beta},
&1\leq\beta\leq2,\\[2mm]
1-\dfrac1\beta,
&2\leq\beta\leq\infty.
\end{cases}
\end{equation*}
For comparison, Theorem \ref{thm:max} gives \eqref{beta-beta} whenever $s>s(2,\beta)$.
Figure \ref{fig:ex} compares the necessary and the  sufficient regularity conditions.
In dimension two, the improvement over the two previous necessary
bounds is strict for $6/5<\beta<2$. 
\begin{figure}[t]
\centering
\begin{tikzpicture}[x=6cm,y=4cm, scale=1.7]
\draw[->] (0,0) -- (1.08,0)
node[right] {$\frac 1\beta$};
\draw[->] (0,0) -- (0,1.08)
node[above] {$s$};
\draw[red!70!black,densely dashed]
(0,1)
-- ({1/2},{1/2})
-- (1,{1/3});
\draw[blue!70!black,]
(0,1)
-- ({1/2},{1/2})
-- ({5/6},{1/3})
-- (1,{1/3});
\fill ({1/2},{1/2}) circle (1.2pt);
\fill ({5/6},{1/3}) circle (1.2pt);
\node[below left] at (0,0) {$0$};
\draw ({1/2},0.012) -- ({1/2},-0.012)
node[below] {$\frac12$};
\draw ({5/6},0.012) -- ({5/6},-0.012)
node[below] {$\frac56$};
\draw (1,0.012) -- (1,-0.012)
node[below] {$1$};
\draw (0.012,{1/3}) -- (-0.012,{1/3})
node[left] {$\frac13$};
\draw (0.012,{1/2}) -- (-0.012,{1/2})
node[left] {$\frac12$};
\draw (0.012,1) -- (-0.012,1)
node[left] {$1$};
\node[blue!70!black,below left]
at (0.28,0.72)
{$\sigma(2,\beta)$};
\node[red!70!black,above right]
at (0.61,0.47)
{$s(2,\beta)$};
\end{tikzpicture}

\caption{
Known necessary and sufficient regularity bounds in dimension $d=2$. The solid curve represents the necessary condition $\sigma(2,\beta)$ from Theorem \ref{thm:ex}, while the dashed curve represents the known sufficient condition $s(2,\beta)$ from Theorem \ref{thm:max}. For $\beta\in[2,\infty]$, the two curves coincide. The sufficient condition is $s>s(2,\beta)$ and the necessary condition is $s\ge\sigma(2,\beta)$.
}
\label{fig:ex}
\end{figure}

We now extend \eqref{sup-schr} to orthonormal systems for the Schr\"odinger operators with quadratic potentials and the special Hermite operator.

\begin{thm}\label{thm:on-max-gen}
Let $\mathcal A$ be either $\mathcal H_\omega$ on $\R^d$ or $\mathcal L$ on $\R^{2d}$, and set $n=d$ in the former case and $n=2d$ in the latter. Let $\beta \in [1,\infty]$ and $s>s(n,\beta)$.
Fix $0<T_{\mathcal L}<\pi/2$. There exists $T_{\mathcal H_\omega}\in (0,1]$, depending on $\omega$, such that the following holds. Set
$I_{\mathcal A}=[0,T_{\mathcal A}]$.
Then the estimate
\begin{align}\label{S-op}
\Big\| \sup_{t\in I_{\mathcal A}} \Big| \sum_j \lambda_j |e^{-it\mathcal A} f_j|^2 \Big| \Big\|_{L_x^{\beta}(B_1^n)}
\lesssim \|\lambda\|_{\ell^\beta}
\end{align}
holds for all orthonormal systems $(f_j)\subset H^s(\R^n)$ and all sequences $\lambda =(\lambda_j)\in \ell^\beta(\C)$.
\end{thm}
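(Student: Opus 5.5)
The plan is to reduce Theorem \ref{thm:on-max-gen} to Theorem \ref{thm:max} by a pointwise identity and a change of variables. For $\mathcal H_\omega$ I would use the Cho--Lee--Vargas representation: on a short interval $[0,T_{\mathcal H_\omega}]$ there are $C^1$ functions such that
\begin{equation*}
e^{-it\mathcal H_\omega}f(x)=m(t)\,e^{i\phi(x,t)}\,e^{i\tau(t)\Delta}f\big(a(t)x\big),
\end{equation*}
up to anisotropic coordinate dilations. Here $|m(t)|$ is bounded above and below, $\phi$ is real, $\tau(0)=0$, $\tau'>0$, and $a(t)$ is a diagonal matrix close to the identity. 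For $\mathcal L$ I would use the analogous identity \eqref{id-special}. I expect it to take the form
\begin{equation*}
e^{-it\mathcal L}f(z)=c(t)\,e^{i\psi(z,t)}\,e^{i\tau(t)\Delta}f\big(R(t)z\big),
\end{equation*}
with $\tau(t)\simeq\tan t$, $|c(t)|\simeq 1$, and $R(t)$ a linear map with $\|R(t)\|,\|R(t)^{-1}\|\simeq1$ for $t\in[0,T_{\mathcal L}]$ with $T_{\mathcal L}<\pi/2$. The restriction $T_{\mathcal L}<\pi/2$ keeps these quantities nondegenerate.

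The key observation is that the modulus kills the phase. We get
\begin{equation*}
\sum_j\lambda_j|e^{-it\mathcal A}f_j(x)|^2=|m(t)|^2\sum_j\lambda_j\,|e^{i\tau(t)\Delta}f_j(a(t)x)|^2,
\end{equation*}
with the same orthonormal system $(f_j)\subset H^s$ and no modification of the data. Hence
\begin{equation*}
\sup_{t\in I_{\mathcal A}}\Big|\sum_j\lambda_j|e^{-it\mathcal A}f_j(x)|^2\Big|\lesssim\sup_{t}\Big|\sum_j\lambda_j\,|e^{i\tau(t)\Delta}f_j(a(t)x)|^2\Big|.
\end{equation*}
The remaining difficulty is that the spatial dilation $a(t)x$ depends on $t$, so we cannot simply change variables in $x$ before taking the supremum.

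This step is the main obstacle. I would handle it as in Lee--Rogers and Cho--Lee--Vargas. Write the function $e^{i\tau\Delta}f_j(a(t)x)$ by rescaling: for a diagonal $a$, one has $e^{i\tau\Delta}f(ax)=e^{i\tau'\Delta_a}(f(a\,\cdot))(x)$ type relations. Alternatively, bound the supremum over $t$ by the supremum over the two-parameter family $(\tau,a)$ in a compact set near $(0,I)$. Then reduce to the sup over $\tau$ for frozen $a$ plus a control of the $a$-variation. A cleaner route is to use the equivalence theorem of \cite{CLV}, which applies to a general "mixed-norm" quantity $\|\sup_t|F(x,t)|\|_{L^p_x}$. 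It shows that local $L^p_xL^\infty_t$ bounds for $F(a(t)x,\tau(t))$ follow from those for $F(x,t)$ on slightly larger balls and time intervals, provided $a,\tau$ are $C^1$ and $\tau'\neq0$. Their argument only uses $F$ as a function, so it should apply verbatim to
\begin{equation*}
F(x,t)=\sum_j\lambda_j|e^{it\Delta}f_j(x)|^2.
\end{equation*}
Failing that, I would localize $f_j$ to frequency $\sim N$ after a Littlewood--Paley decomposition, which keeps orthonormality up to summation losses absorbed by $s>s(n,\beta)$. At frequency $N$ the solution is essentially constant on $N^{-1}$-balls in $x$ and $N^{-2}$-intervals in $t$. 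Perturbing $a(t)x$ by $O(|t-t_0|)$ is then harmless after partitioning time into intervals of length $N^{-1}$. The resulting pieces are Galilean/scaled copies handled by Theorem \ref{thm:max}, using $\epsilon$ of room in $s$.

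Finally, I would apply Theorem \ref{thm:max} in dimension $n$, with $q=2\beta$ so that $L^{q/2}=L^\beta$. This is done on a ball of radius $C$ (by translation and finitely many unit balls) and time interval $[0,\tau(T_{\mathcal A})]$, which is covered by finitely many unit intervals using time translation $e^{it_0\Delta}$. That translation preserves orthonormality in $H^s$. The change of variables $x\mapsto a x$ costs only a constant, which yields \eqref{S-op} for all $s>s(n,\beta)$.
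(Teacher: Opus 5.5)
You correctly isolate the real obstacle, that the evaluation point $a(t)x$ moves with $t$. However, neither of your ways around it works as stated.

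Your ``cleaner route'' rests on the claim that the Cho--Lee--Vargas argument uses $F$ only as a function. It does not, and for general $F$ the comparison is false. Take $a(t)=(1+t)I$ and $F(y,t)=\chi_{\{|y-e_1/2|<\epsilon\}}(y)$. Then $\sup_{t\in[0,T]}|F(a(t)x,\tau(t))|=1$ on a tube of measure $\sim T\epsilon^{d-1}$, while $\sup_t|F(y,t)|$ is supported on a set of measure $\sim\epsilon^{d}$. Their argument exploits two features of the linear Fourier representation of $e^{it\Delta}f$. First, the error phase is expanded in a Fourier series, which replaces the datum $f$ by modified data $Q_mf$. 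Second, the passage from short to long intervals uses $TT^*$ kernel decay. In the orthonormal setting both must be redone: the first with isometries $Q_m$ preserving orthonormality together with Lemma~\ref{lem:TB-T1} (Lemma~\ref{lem:ap}, Proposition~\ref{lem:local equiv}), the second in Schatten classes.

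Your fallback does find the first step. On intervals of length $N^{-1}$ the motion of $a(t)x$ is $O(N^{-1})$ and the nonlinear part of the time change is $O(N^{-2})$, which is exactly the superlocal equivalence. But then $\sup_{t\in I}$ is a maximum over $\sim N$ such intervals. Recombining them by the triangle inequality or an $\ell^\beta$ sum costs a factor $N^{1/\beta}$, a power loss that no $\epsilon$ of room in $s$ absorbs.

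The missing idea is the globalization of Proposition~\ref{lem:glob2}. Write $WTT^*\overline W=\sum_{J,J'}U_JU_{J'}^*$ with $U_J=\chi_JWT$. The diagonal and near-diagonal parts are controlled by the superlocal bound through mutual orthogonality (Lemma~\ref{lem:car}) and the summation step $\tilde q,\tilde r\le 2\beta'$. This is where $2\beta\le q$ and $2\beta\le r$ enter, here with $(q,r)=(2\beta,\infty)$. The far part is handled by the nonstationary-phase kernel bound of Lemma~\ref{lem:ker}. That bound yields only a Hilbert--Schmidt estimate, so it controls $\mathcal C^{\beta'}$ only when $\beta\le 2$.

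Consequently, your uniform treatment of $\beta\in[1,\infty]$ also has a hole. The paper obtains $2<\beta\le\infty$ separately: Bessel's inequality gives $\beta=\infty$ for $s>n/2$, and complex interpolation of $W\mapsto WS_\fa^\zeta\langle D\rangle^{-s(z)}$ between $\mathcal C^4$ and $\mathcal C^2$ fills in the rest.

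Finally, your representation of $e^{-it\mathcal H_\omega}$ through a single $e^{i\tau(t)\Delta}$ is not available for general $\omega$. Lemma 4.1 of Cho--Lee--Vargas gives the anisotropic $S_\fa^\zeta$ with genuinely different $\fa_j(t)$, and no fixed dilation makes it isotropic. The paper makes it isotropic only locally, by linearizing $\fa$ at the centre of each short interval and rescaling by $M_\fa$. This is one more reason the short-interval/globalization scheme behind Theorem~\ref{thm:equiv} cannot be bypassed.
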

By Theorem \ref{thm:equiv} below, the necessary condition $s\ge\sigma(n,\beta)$ from Theorem \ref{thm:ex} also applies to $\mathcal H_\omega$ and $\mathcal L$ for $\beta\in[1,2]$ on the respective intervals $I_{\mathcal A}$.
The proof of Theorem \ref{thm:on-max-gen} builds on the work of Cho--Lee--Vargas \cite{CLV}. A key ingredient is an equivalence principle relating the free Schr\"odinger evolution to a class of generalized Schr\"odinger propagators. To describe this class, we introduce the generalized Fourier integral operator
\[
S_{\mathfrak a}^\zeta f(x,t)
=(2\pi)^{-d}\int e^{i\zeta(x,t)\cdot \xi-i\sum_{j=1}^d \mathfrak a_j(t)\xi_j^2}\,\widehat f(\xi)\,d\xi,
\]
where $\mathfrak a=(\mathfrak a_1,\cdots, \mathfrak a_d)\in C^2([0,1])$ satisfies 
\begin{align}\label{deriv-a}
A_0:=\min_j\inf_{t\in [0,1]}\fa_j'(t) >0,
\end{align}
and where a map $\zeta_t:=\zeta(\cdot,t): \R^d \rightarrow \R^d$, $t\in [0,1]$, satisfies the bi-Lipschitz condition in $x$,
\begin{align}\label{lip-const}
D_0^{-1} |x-y|\le |\zeta_t(x)-\zeta_t(y)|\le D_0|x-y|
\end{align}
and the Lipschitz condition in $t$,
\begin{align}\label{zeta-holder}
|\zeta_t(x)-\zeta_{t'}(x)|\le D_0|t-t'|
\end{align}
for all $x,y\in B_1^d$ and $t,t'\in[0,1]$. Here $D_0\ge1$ is a constant. On the respective time interval $I_{\mathcal A}$,
both $e^{-it\mathcal H_\omega}$ and $e^{-it\mathcal L}$ can be represented as $S_\fa^\zeta f$, multiplied by a bounded function. See \cite[Lemma 4.1]{CLV} for $e^{-it\mathcal H_\omega}$ and Section \ref{single function} for $e^{-it\mathcal L}$.

\begin{rmk}\label{rmk:normalize time}
If $\mathfrak a$ and $\zeta$ satisfy the analogues of
\eqref{deriv-a}, \eqref{lip-const}, and \eqref{zeta-holder}
on $[0,T_{\mathcal A}]$, then the rescaled functions
$\widetilde{\mathfrak a}_j(t):=\mathfrak a_j(T_{\mathcal A}t)$,
$\widetilde{\zeta}(x,t):=\zeta(x,T_{\mathcal A}t)$
satisfy these conditions on $[0,1]$, possibly with different
constants $A_0$ and $D_0$.
\end{rmk}

In \cite{CLV}, the authors established an equivalence between mixed-norm estimates for $S_{\mathfrak a}^{\zeta}f$ and those for the free Schr\"odinger propagator $e^{it\Delta}f$ in the single-function setting. See Theorem~\ref{thm:CLV}, stated  in Section \ref{sub:twisted}. 
We prove an orthonormal analogue of the equivalence theorem of \cite{CLV}. Under the conditions below, the estimates for $S_{\fa}^\zeta$ hold for every $s>s_0$ if and only if the corresponding estimates for the free Schr\"odinger propagator hold.
Thus, the question of optimal regularity for orthonormal estimates for this class of generalized propagators reduces
to the corresponding question for the free Schr\"odinger operator. The maximal estimates for the Hermite and special Hermite operators follow as applications.

Our main result in this direction is the following.

\begin{thm}\label{thm:equiv}
Let $q,r\in[2,\infty]$, $\beta\in[1,2]$, and $s_0\in\R$, and assume that $2\beta\le r$. Set 
$Y(I)=L_x^{\frac q2}(B_1^d;L_t^{\frac r2}(I))$ or
$Y(I)=L_t^{\frac r2}(I;L_x^{\frac q2}(B_1^d))$.
In the case
$Y(I)=L_x^{q/2}(B_1^d;L_t^{r/2}(I))$, assume additionally that $2\beta\le q$.
Assume that $\fa$ and $\zeta$ satisfy \eqref{deriv-a}, \eqref{lip-const}, and \eqref{zeta-holder}.
Then the following are equivalent.
\begin{enumerate}[$(i)$]
\item 
For $s>s_0$, there exists a constant $C=C(s)>0$ such that
\begin{align}\label{on-sch-2}
\Big\| \sum_j \lambda_j |e^{it\Delta}f_j|^2 \Big\|_{Y(I) 
} \le C\|\lambda\|_{\ell^\beta}
\end{align}
holds for all orthonormal initial data $(f_j) \in H^s(\R^d)$ and all sequences
$\lambda=(\lambda_j)\in \ell^\beta$.
\item 
For $s>s_0$,  there exists a constant $C=C(s)>0$ such that
\begin{align}\label{on-hermite-2}
\Big\| \sum_j \lambda_j |S_{\fa}^\zeta  f_j|^2 \Big\|_{Y(I)
} \le C \|\lambda\|_{\ell^\beta}
\end{align}
holds for all orthonormal initial data $(f_j) \in H^{s}(\R^d)$
and all sequences
$\lambda=(\lambda_j)\in \ell^\beta$. 
\end{enumerate}
\end{thm}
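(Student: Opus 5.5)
The plan is to follow the single-function argument of Cho--Lee--Vargas (Theorem~\ref{thm:CLV}) and make each step compatible with orthonormality. Both directions have the same structure, because $e^{it\Delta}$ is itself of the form $S^\zeta_\fa$ with $\fa_j(t)=t$ and $\zeta_t(x)=x$, and the relation between the two propagators is symmetric after inverting the changes of variables below. So I describe only $(i)\Rightarrow(ii)$.

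\emph{Step 1: reduction to dyadic frequency pieces.} Since $s>s_0$ is open, it suffices to prove a frequency-localized bound with an $N^{\epsilon}$ loss. Precisely, for $\widehat f_j$ supported in $\{|\xi|\sim N\}$ and $(f_j)$ orthonormal in $L^2$, I aim to show
\[
\Big\|\sum_j\lambda_j|S^\zeta_\fa f_j|^2\Big\|_{Y(I)}\lesssim N^{2s_0+\epsilon}\|\lambda\|_{\ell^\beta}.
\]
Then I sum over $N$. For a general orthonormal system in $H^s$, I apply the operator-valued form: $\sum_j\lambda_j|Tf_j|^2$ is the density of $T\gamma T^*$ with $\gamma=\sum_j\lambda_j|f_j\rangle\langle f_j|$. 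Splitting $f=\sum_N P_Nf$ and using Cauchy--Schwarz in $N$ with weights $N^{-\epsilon}$ gives $|Tf|^2\lesssim\sum_N N^{\epsilon}|TP_Nf|^2$. Since $(N^{-s}P_N\langle D\rangle^s f_j)_j$ has Schatten norm $\lesssim 1$ on the relevant class, the pieces sum. This uses only $q/2,r/2\ge1$, which is implied by $q,r\ge2$.

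\emph{Step 2: short-time linearization.} I partition $I$ into intervals $J_k=[t_k,t_k+\delta]$. On each $J_k$ I write
\[
\fa_j(t)=\fa_j(t_k)+\fa_j'(t_k)(t-t_k)+O(\delta^2)
\quad\text{and}\quad
\zeta_t(x)=\zeta_{t_k}(x)+O(\delta),
\]
using \eqref{zeta-holder}. The leftover phase is $e^{i\psi(x,t,\xi)}$, where $\psi$ is smooth in $\xi$ on the relevant frequency region and has bounded derivatives, provided $\delta\lesssim N^{-1}$ or one first localizes to frequency cubes of a suitable side. I expand this multiplier in a Fourier series on a cube containing the support,
\[
e^{i\psi}=\sum_{m}c_m(x,t)e^{i\ell^{-1}m\cdot\xi},
\]
with $\sum_m\sup|c_m|\lesssim1$. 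The key observation is the Cauchy--Schwarz bound
\[
\Big|\sum_m c_mT_mf_j\Big|^2\le\Big(\sum_m|c_m|\Big)\sum_m|c_m||T_mf_j|^2 .
\]
Each $T_m$ is a translate of a fixed operator, and translates preserve orthonormality. This handles the error terms at the level of densities without any loss.

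\emph{Step 3: change of variables.} The main term on $J_k$ is
\[
e^{it\Delta'}g_k\big(\Phi_k(x)\big),\qquad \widehat{g_k}(\xi)=e^{-i\sum_j\fa_j(t_k)\xi_j^2}\,\widehat f(\xi)
\]
(in anisotropically rescaled variables), with the new time $t-t_k$ and spatial map $\Phi_k$. Here $\Phi_k$ combines $\zeta_{t_k}$ with the dilation $\xi_j\mapsto(\fa_j'(t_k))^{1/2}\xi_j$. The map $f\mapsto g_k$ is unitary up to a Jacobian constant, so $(g_{k,j})_j$ is again orthonormal. By \eqref{lip-const} and \eqref{deriv-a}, the composition with $\Phi_k$ costs only constants in $L^{q/2}_x(B_1^d)$ norms, with the ball enlarged by a factor $D_0A_0^{-1/2}$, which is handled by translation and covering. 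Hypothesis $(i)$, applied with time interval $J_k-t_k\subset I$, then bounds each piece by $N^{2s_0+\epsilon}\|\lambda\|_{\ell^\beta}$.

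\emph{Step 4: recombining the time pieces (main obstacle).} The pieces must be reassembled in $L^{r/2}_t$. There are about $\delta^{-1}$ of them, so a naive triangle inequality loses a power of $N$. I would try to avoid this in two ways. First, take $\delta$ as large as possible: fix $\delta$ small but independent of $N$, and treat the quadratic error $(t-t_k)^2\xi^2$ by absorbing it into a slightly modified $\fa$. This amounts to an iterative freezing of the coefficients, exploiting that $e^{it\Delta}$ with a $C^2$ time reparametrization in each coordinate is what we actually need. Second, for the genuinely $N$-dependent part, use interpolation. At $\beta=1$ the estimate is the single-function CLV equivalence (Theorem~\ref{thm:CLV}), with the $L^{q}$, $L^{r}$ exponents. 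At the other end I would use a $\beta=2$-type bound that follows from $(i)$ by the Schatten-class duality $\|W_1TT^*W_2\|_{\mathfrak S^{\beta'}}$. The hypotheses $2\beta\le r$ (and $2\beta\le q$ in the $L_x L_t$ case) are what make the $\ell^{r/2}$ sum over $k$ dominate an $\ell^{\beta}$ sum in $\lambda$, via Minkowski with $r/2\ge\beta$. This is the step I expect to require the most care, and where the restriction $\beta\le2$ enters.
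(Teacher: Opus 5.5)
Your Steps 1--3 are sound. They essentially reproduce the paper's dyadic reduction (Proposition~\ref{prop:reduction}) and its superlocal equivalence (Lemma~\ref{lem:ap}, Proposition~\ref{lem:local equiv}). On intervals of length $\delta\sim N^{-1}$ the residual phases are $O(N\delta)+O(N^2\delta^2)=O(1)$, so the Fourier coefficients decay uniformly. Cauchy--Schwarz together with the orthonormality-preserving isometries then gives the comparison with $e^{it\Delta}$ without loss.

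The genuine gap is Step~4, and neither of your two suggestions closes it.
\begin{itemize}
\item With $\delta$ independent of $N$, the residual phases have size $N\delta$ and $N^2\delta^2$, so the $c_m$ no longer decay uniformly in $N$. Absorbing the quadratic Taylor remainder into a ``modified $\fa$'' just returns you to the original problem with a nonlinear $\fa$.
\item The interpolation route cannot work. Hypothesis $(i)$ is available only at the given $\beta$, and it implies no $\beta=2$ bound, since the $\ell^2$ estimate is the stronger one. Even with endpoint bounds at $\beta=1$ and $\beta=2$ at their own thresholds, interpolation only yields a convex combination of those thresholds, not the same $s_0$. So it cannot produce an equivalence for arbitrary $s_0$.
\item In the primal formulation the hypothesis $2\beta\le r$ buys nothing. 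Summing $\|\cdot\|_{L^{r/2}_t(J_k)}^{r/2}$ over the $\sim\delta^{-1}$ pieces reproduces exactly the $\delta^{-2/r}$ loss you identified, because the same $f_j$ enter every piece.
\end{itemize}

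The missing idea is to globalize in the dual Schatten formulation of Lemma~\ref{lem:dual}, using off-diagonal decay in time of the $TT^*$ kernel; this is Proposition~\ref{lem:glob2}. Take $T=S^\zeta_{\fa}P_k$, a partition $\{J\}$ of $I$ into intervals of length $\sim 2^{-k}$, and $U_J=\chi_JWT$. Then $WTT^*\overline W=\sum_{J,J'}U_JU_{J'}^*$, and the terms are handled as follows.
\begin{itemize}
\item The diagonal blocks $U_JU_J^*$ are mutually orthogonal because the $\chi_J$ have disjoint supports; so, for each fixed shift $m$, are the near-diagonal blocks $U_JU_{J_m}^*$. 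By Lemma~\ref{lem:car}, their $\mathcal C^{\beta'}$ norm is the $\ell^{\beta'}$ sum of the block norms.
\item The superlocal bound controls each block by $C2^{2ks}\|W\chi_J\|^2$. The estimate $\bigl(\sum_J\|W\chi_J\|^{2\beta'}\bigr)^{1/\beta'}\le\|W\|^2$ holds precisely because $2\beta'\ge\tilde r$, i.e.\ $2\beta\le r$. In the $L_x^{q/2}L_t^{r/2}$ case one also needs $2\beta'\ge\tilde q$ to pass the sum inside the outer $L_x^{\tilde q}$ norm by Minkowski.
\item For $|t-t'|\ge C_02^{-k}$, the $\xi$-gradient of the $\fa$-part of the phase of $\CK_k$ is $\gtrsim A_02^k|t-t'|$ on $|\xi|\sim2^k$ by \eqref{deriv-a}. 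This dominates the bounded $\zeta$-part, so non-stationary phase (Lemma~\ref{lem:ker}) gives $|\CK_k|\lesssim_N2^{-kN}$ there.
\item The far-off-diagonal part is therefore $O(2^{-kN})$ in Hilbert--Schmidt norm. The restriction $\beta\le2$ enters exactly through $\mathcal C^2\subset\mathcal C^{\beta'}$, not through interpolation.
\end{itemize}
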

To prove Theorem \ref{thm:equiv}, we adapt the time
localization argument of \cite{CLV} to the orthonormal setting.
At frequency $2^k$, we first compare the propagators on intervals
of length comparable to $2^{-k}$ and then combine these estimates
in the dual Schatten spaces to obtain estimates on a fixed interval.

Combined with the known free Schr\"odinger estimates, Theorem \ref{thm:equiv} yields Theorem \ref{thm:on-max-gen} for $1\le\beta\le2$. Interpolation with a direct bound at $\beta=\infty$ gives the remaining range. Thus the estimates for quadratic potentials and the special Hermite operator follow from the orthonormal time localization argument and the known free estimates.
To the best of our knowledge, these are the first maximal estimates in this setting.

Related orthonormal Strichartz estimates for the Hermite and special Hermite operators have been studied by Bez et al. \cite{BHLNS}, Mondal--Swain \cite{MS}, and Ghosh--Mondal--Swain \cite{GMS}. The estimates for the Hermite operator in \cite{BHLNS} use the Sj\"ogren--Torrea relation \cite{ST}, while the estimates in \cite{MS,GMS} were obtained through
bounds for the associated operators.
Further orthonormal Strichartz estimates for general flows associated with the special Hermite operator, including an endpoint obstruction, were obtained by Ghosh--Swain \cite{GS_25}.

As a further application, the identity in Proposition \ref{prop:sp-stri}, together with the Strichartz estimates of \cite{FLLS, FS_AJM, BHLNS}, gives a short proof of \cite[Theorem 1.6]{GMS}. See Corollary~\ref{cor:GMS}. Theorem \ref{thm:equiv} also gives local Strichartz estimates with Sobolev regularity, as discussed in Section~\ref{subsec:loc sch}.

For $d\ge2$ and $1<\beta<2$, a gap remains between the necessary and sufficient regularity bounds above. Theorem~\ref{thm:equiv} transfers any improvement of the free sufficient bound to $S_{\fa}^\zeta$, and hence to the Hermite and special Hermite operators.

\subsection{Pointwise convergence for orthonormal systems}
Let $\gamma_0$ be a self-adjoint operator on $L^2(\R^n)$. We set
\[
\gamma(t)=e^{-it\mathcal A} \gamma_0 e^{it\mathcal A}.
\]
Then $\gamma(t)$ solves the operator-valued equation
\begin{align*}
i\partial_t \gamma = [\mathcal A, \gamma], \quad \gamma(0)=\gamma_0.
\end{align*}
For $\mathcal A=(-\Delta)^{a/2}$, this is the fractional von Neumann--Schr\"odinger equation with free interaction. We now apply the maximal estimates to density functions of $\gamma(t)$ in a certain class.

For $1\le \beta\le \infty$, let $\mathcal C^\beta(L^2(\R^n))$ be the Schatten class of compact operators $T$ on $L^2(\R^n)$  with the norm $\|T\|_{\mathcal C^\beta}=\|\nu\|_{\ell^\beta}$, where $\nu=(\nu_j)$ are the eigenvalues of $|T|=\sqrt{TT^*}$. Here, $\mathcal C^\infty(L^2)=\mathcal B(L^2)$ denotes the bounded operators equipped with the operator norm. We also define Sobolev type Schatten spaces
\[
\mathcal C^{\beta,s}
=\big\{ \gamma \in \text{Com}(H^{-s}(\R^n),H^s(\R^n)):
\big\| \langle D\rangle^{s} \gamma \langle D \rangle^{s} \big\|_{\mathcal C^\beta(L^2(\R^n))}<\infty \big\}.
\]
Here, $\operatorname{Com}(H^{-s},H^s)$ denotes the space of compact operators from $H^{-s}(\R^n)$ to $H^s(\R^n)$, and $\langle D \rangle = (1-\Delta)^{1/2}$ is the Fourier multiplier with symbol $\langle \xi \rangle = (1+|\xi|^2)^{1/2}$.

Let $\gamma_0\in\mathcal C^{\beta,s}$ be self-adjoint, and set
$\Gamma_0=\langle D\rangle^s\gamma_0\langle D\rangle^s$.
Let $(\mu_j,g_j)$ be the nonzero eigenvalues and eigenvectors of $\Gamma_0$,
with $(g_j)$ orthonormal in $L^2(\R^n)$.
Then $h_j=\langle D\rangle^{-s}g_j$ form an orthonormal
system in $H^s(\R^n)$, and
$
\gamma_0g=\sum_j\mu_j\langle g,h_j\rangle h_j,
$
and $\|(\mu_j)\|_{\ell^\beta}
=\|\gamma_0\|_{\mathcal C^{\beta,s}}.
$
For finite-rank $\gamma_0$, restricting the kernels to the
diagonal gives
$
\rho_{\gamma_0}(x)=\sum_j\mu_j|h_j(x)|^2
$
and
\begin{align}\label{density sum}
\rho_{\gamma(t)}(x)
=\sum_j\mu_j|e^{-it\mathcal A}h_j(x)|^2.
\end{align}
For general $\gamma_0$, finite-rank approximation defines
$\rho_{\gamma_0}$ in $L^1_{\mathrm{loc}}(\R^n)$ whenever
$s>\frac n2(1-\frac1\beta)$; see
\cite{BLN_Forum,BLN_Selecta,BKS_TLMS,KKS}.
In the range $s>s(n,\beta)$ considered below, the maximal
estimates also define the evolved density.

We study the pointwise convergence
\begin{equation}\label{e:pw density}
\lim_{t\to0+}\rho_{\gamma(t)}(x)
=
\rho_{\gamma_0}(x),\quad \text{a.e.} \,\,x\in \R^n
\end{equation}
for all self-adjoint $\gamma_0\in\mathcal C^{\beta,s}$. This is an analogue of Carleson's problem for infinitely many particles.
Theorem \ref{thm:on-max-gen} gives the following result.

\begin{cor}
Let $1\le \beta \le \infty$, and let $\mathcal A$ be either $\mathcal H_\omega$ on $\R^d$ or $\mathcal L$ on $\R^{2d}$.
Set $n=d$ in the former case and $n=2d$ in the latter. Then
\eqref{e:pw density} holds for all self-adjoint $\gamma_0 \in \mathcal C^{\beta,s}$ provided $s>s(n,\beta)$.
\end{cor}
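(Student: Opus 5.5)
The corollary follows from Theorem \ref{thm:on-max-gen} by the standard density argument, adapted to operators. Fix $1\le\beta\le\infty$, $s>s(n,\beta)$, a self-adjoint $\gamma_0\in\mathcal C^{\beta,s}$, and the interval $I_{\mathcal A}$ from Theorem \ref{thm:on-max-gen}. Since the statement is local in $x$, it suffices to prove convergence a.e. on $B_1^n$. The same argument applies on any ball $B_R^n$, because a translation and dilation of the estimates (or a covering argument) gives \eqref{S-op} on $B_R^n$ with a constant depending on $R$.

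\emph{Step 1: operator form of the maximal estimate.} I would first rewrite \eqref{S-op} as a bound for density maximal functions. By the spectral decomposition described before \eqref{density sum}, every finite-rank self-adjoint $\gamma\in\mathcal C^{\beta,s}$ has density $\rho_{e^{-it\mathcal A}\gamma e^{it\mathcal A}}=\sum_j\mu_j|e^{-it\mathcal A}h_j|^2$ with $(h_j)$ orthonormal in $H^s$. Hence \eqref{S-op} gives
\[
\Big\|\sup_{t\in I_{\mathcal A}}|\rho_{\gamma(t)}|\Big\|_{L^\beta(B_1^n)}\lesssim\|\gamma\|_{\mathcal C^{\beta,s}}.
\]
Non-self-adjoint differences cause no difficulty, since we only apply the bound to $\gamma_0-\gamma_0^{N}$, which is self-adjoint. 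This bound also extends $\rho_{\gamma(t)}$ to all $\gamma_0$ by finite-rank approximation, with the limit taken in $L^\beta(B_1^n;L^\infty_t)$.

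\emph{Step 2: approximation.} Let $\gamma_0^N=\sum_{j\le N}\mu_j\langle\cdot,h_j\rangle h_j$. When $\beta<\infty$, we have $\|\gamma_0-\gamma_0^N\|_{\mathcal C^{\beta,s}}=\|(\mu_j)_{j>N}\|_{\ell^\beta}\to0$. For $\beta=\infty$ the space $\mathcal C^{\infty,s}$ consists of compact operators, so $\mu_j\to0$ and the tail norm $\sup_{j>N}|\mu_j|$ still tends to $0$.

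I would further approximate each $h_j$ in $H^s$ by Schwartz functions, or more simply by functions in $H^{s'}$ with $s'$ large. For such nice data, $e^{-it\mathcal A}h\to h$ uniformly on $B_1^n$ as $t\to0^+$; this holds for $\mathcal H_\omega$ and $\mathcal L$ by the representation as a bounded multiple of $S^\zeta_{\mathfrak a}$ and Sobolev embedding. Consequently the finite sum $\rho_{\gamma^N(t)}$ converges pointwise to $\rho_{\gamma^N_0}$.

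There is a technical point here: perturbing the $h_j$ destroys orthonormality. I would avoid this by not perturbing at all. Instead, I would note that for a single fixed $h\in H^s$ the pointwise convergence $e^{-it\mathcal A}h\to h$ a.e. already holds. This follows from the $\beta=1$ case of \eqref{S-op}, which is the single-function maximal estimate in $L^2$, since $s>s(n,\beta)\ge s(n,1)=\frac{n}{2(n+1)}$. Hence $\rho_{\gamma^N(t)}\to\rho_{\gamma_0^N}$ a.e. for each $N$.

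\emph{Step 3: the standard $\limsup$ argument.} For $\alpha>0$, set
\[
E_\alpha=\{x\in B_1^n:\limsup_{t\to0+}|\rho_{\gamma(t)}(x)-\rho_{\gamma_0}(x)|>\alpha\}.
\]
By Step 2,
\[
|\rho_{\gamma(t)}-\rho_{\gamma_0}|\le\sup_{t\in I_{\mathcal A}}|\rho_{(\gamma-\gamma^N)(t)}|+|\rho_{\gamma_0-\gamma_0^N}|+o(1)\quad\text{a.e.}
\]
The first two terms are dominated by $\sup_{t\in I_{\mathcal A}}|\rho_{(\gamma-\gamma^N)(t)}|$, since $t=0\in I_{\mathcal A}$. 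Chebyshev's inequality and Step 1 then give $|E_\alpha|\lesssim\alpha^{-\beta}\|\gamma_0-\gamma_0^N\|^\beta_{\mathcal C^{\beta,s}}\to0$; for $\beta=\infty$ one gets $E_\alpha=\emptyset$ once the tail norm is below $\alpha/C$. Hence $|E_\alpha|=0$ for every $\alpha>0$, which proves \eqref{e:pw density}.

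The only delicate step is Step 1 together with the limiting definition of $\rho_{\gamma(t)}$ for infinite-rank $\gamma_0$. One must check that the $L^\beta L^\infty_t$ limit agrees, for a.e. $x$ and all $t$, with the quantity appearing in \eqref{e:pw density}. I would handle this by defining $\rho_{\gamma(t)}$ as that limit, which is consistent with the convention stated before the corollary.
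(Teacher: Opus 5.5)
Your argument is the standard one the paper has in mind. The paper omits the details and refers to \cite{BKS_TLMS, BLN_Selecta}. You control the tail $\gamma_0-\gamma_0^N$ by the maximal estimate of Theorem~\ref{thm:on-max-gen} and Chebyshev. You handle the finite-rank part by single-function a.e.\ convergence from the $\beta=1$ case, using $s(n,\beta)\ge s(n,1)=\frac{n}{2(n+1)}$. This is correct, including your treatment of $\beta=\infty$ via compactness of $\langle D\rangle^s\gamma_0\langle D\rangle^s$.

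The one step that does not work as written is the extension from $B_1^n$ to arbitrary balls. Neither $\mathcal H_\omega$ nor $\mathcal L$ commutes with translations or dilations, so \eqref{S-op} cannot be moved to $B_R^n$ that way. A covering argument also needs exactly such translated estimates. Instead, rerun the proof of Theorem~\ref{thm:on-max-gen} with $B_1^n$ replaced by $B_R^n$:
\begin{itemize}
\item the functions $\zeta,\fa$ representing $e^{-it\mathcal A}$ satisfy \eqref{deriv-a}, \eqref{lip-const}, and \eqref{zeta-holder} on $B_R^n$ with $R$-dependent constants (e.g.\ $|\zeta_t(z)-\zeta_{t'}(z)|\le R\sec^2(T_{\mathcal L})|t-t'|$ for $\mathcal L$);
\item the proof of Theorem~\ref{thm:equiv} only uses boundedness of the spatial set, so it goes through with constants depending on $R$;
\item the free estimates on $B_R^n$ follow from translation invariance of $e^{it\Delta}$ and a covering.
\end{itemize}
For $\mathcal L$ alone there is a second option. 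The twisted translations $f(x,y)\mapsto e^{\frac i2(b\cdot x-a\cdot y)}f(x-a,y-b)$ commute with $\mathcal L$ and are bounded on $H^s$, so you can combine them with Lemma~\ref{lem:TB-T1}.
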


Theorems \ref{thm:max} and \ref{thm:equiv} also give convergence along curves for the Schr\"odinger operator.

\begin{cor}
Let $1\le \beta \le \infty$ and $s>s(d,\beta)$. Suppose $\zeta(x,0)=x$ for all $x\in \R^d$, and $\zeta$ satisfies \eqref{lip-const} and \eqref{zeta-holder} for $x,y\in B_1^d$ and $t,t'\in [0,1]$. 
Let $\gamma_0\in \mathcal C^{\beta,s}$ be a self-adjoint operator
and $\gamma(t)=e^{it\Delta}\gamma_0 e^{-it\Delta}$. Then
\[
\lim_{t\rightarrow 0+}\rho_{\gamma(t)}(\zeta(x,t))= \rho_{\gamma_0}(x), \quad \text{a.e.} \quad~ x\in \R^d.
\]
\end{cor}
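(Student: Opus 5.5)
The statement immediately above is the corollary on convergence along curves. I would prove it by the standard maximal-function route, with Theorem~\ref{thm:equiv} supplying the maximal estimate.

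\emph{Step 1: maximal estimate.} Take $\mathfrak a_j(t)=-t$, so that $S_{\mathfrak a}^\zeta f(x,t)=e^{it\Delta}f(\zeta(x,t))$. This sign convention matches the free propagator $e^{it\Delta}$ in item $(i)$ of Theorem~\ref{thm:equiv}, so I would check it against that convention, or equivalently pass to the complex conjugate. Condition \eqref{deriv-a} holds, and \eqref{lip-const}, \eqref{zeta-holder} are assumed. For $1\le\beta\le2$, I apply Theorem~\ref{thm:equiv} with $Y(I)=L_x^{\beta}(B_1^d;L_t^\infty(I))$, that is, $q=2\beta$ and $r=\infty$. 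The conditions $2\beta\le q$ and $2\beta\le r$ hold, and $(i)$ is exactly Theorem~\ref{thm:max} for $s>s(d,\beta)$. This gives the curve maximal estimate
\[
\Big\|\sup_{t\in I}\Big|\sum_j\lambda_j|e^{it\Delta}f_j(\zeta(x,t))|^2\Big|\Big\|_{L^\beta_x(B_1^d)}\lesssim\|\lambda\|_{\ell^\beta}.
\]
For $2<\beta\le\infty$, I interpolate, as in the proof of Theorem~\ref{thm:on-max-gen}, between $\beta=2$ and the trivial $\beta=\infty$ bound. The $\beta=\infty$ bound is $\sup_{x,t}\sum_j|e^{it\Delta}f_j(\zeta(x,t))|^2\lesssim1$ for $s>d/2$, by Bessel's inequality applied to the kernel $\langle D\rangle^{-s}$, which lies in $L^2$. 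The interpolation is done on the operator level $\gamma_0\mapsto\rho$, with complex interpolation in $s$ and $\beta$. Since $s(d,\beta)=\frac d2(1-\frac1\beta)$ is linear in $1/\beta$ on $[2,\infty]$, this gives the full range. Translating $B_1^d$ to any ball, using translation invariance of the free flow and a shifted $\zeta$, localizes the estimate to all of $\R^d$.

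\emph{Step 2: dense class.} For $\gamma_0$ of finite rank with eigenvectors $h_j$ Schwartz, the density $\rho_{\gamma(t)}(\zeta(x,t))=\sum_j\mu_j|e^{it\Delta}h_j(\zeta(x,t))|^2$ is continuous in $(x,t)$ because $e^{it\Delta}h_j$ is smooth. Since $\zeta(x,0)=x$, it converges to $\rho_{\gamma_0}(x)$ for every $x$.

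\emph{Step 3: density argument.} Given self-adjoint $\gamma_0\in\mathcal C^{\beta,s}$, I approximate $\Gamma_0$ in $\mathcal C^\beta$ by finite-rank self-adjoint operators with Schwartz eigenvectors; for $\beta=\infty$ I read the class as compact operators. The difference $\gamma_0-\gamma_0^N$ is self-adjoint with $\mathcal C^{\beta,s}$-norm at most $\varepsilon$, and the maximal bound applies to it. Write $\rho_{\gamma(t)}-\rho_{\gamma^N(t)}$ for the density of the evolved difference. Then the quantity
\[
\limsup_{t\to0}\big|\rho_{\gamma(t)}(\zeta(x,t))-\rho_{\gamma_0}(x)\big|
\]
is controlled by $\sup_t|\rho_{\gamma(t)}-\rho_{\gamma^N(t)}|(\zeta(x,t))+|\rho_{\gamma_0}-\rho_{\gamma^N_0}|(x)$. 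Chebyshev's inequality in $L^\beta(B_1^d)$ then shows the exceptional set has measure $\lesssim(\varepsilon/\delta)^\beta$. The well-definedness of the densities is the finite-rank limit already cited in the paper.

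The main obstacle is Step 1 for $\beta>2$. Theorem~\ref{thm:equiv} only covers $\beta\le2$, so the interpolation must be carried out carefully on the operator level, uniformly along curves. The remaining steps are routine.
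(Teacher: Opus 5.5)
Your argument is essentially the paper's: Theorem~\ref{thm:max} together with Theorem~\ref{thm:equiv} at $(q,r)=(2\beta,\infty)$ for $1\le\beta\le2$, then interpolation with the Bessel bound at $\beta=\infty$ exactly as in Subsection~\ref{sec:proof:on-gen} for $\beta>2$, followed by the standard approximation argument, which you spell out and the paper omits. The one slip is the sign: $e^{it\Delta}$ has multiplier $e^{-it|\xi|^2}$ and $S_{\fa}^\zeta$ carries the phase $-\sum_j\fa_j(t)\xi_j^2$, so the correct choice is $\fa_j(t)=t$ (as in the paper), giving $S_{\fa}^\zeta f(x,t)=e^{it\Delta}f(\zeta(x,t))$ with $\fa_j'=1>0$, whereas your $\fa_j(t)=-t$ produces $e^{-it\Delta}$ and violates \eqref{deriv-a}.
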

Using the decomposition above with $n=d$ and $\fa(t)=(t,\dots,t)$, we have
\[
\rho_{\gamma(t)}(\zeta(x,t))
=\sum_j\mu_j
  |e^{it\Delta}h_j(\zeta(x,t))|^2
=\sum_j\mu_j
  |S^\zeta_{\fa}h_j(x,t)|^2.
\]
Theorem \ref{thm:max} and \ref{thm:equiv}, with $(q,r)=(2\beta,\infty)$ give the required maximal estimate for each fixed ball. (See Subsection \ref{sec:proof:on-gen} below). 
Both corollaries follow from the standard argument of deducing pointwise convergence from the maximal estimate, we omit the details and refer interested readers to \cite{BKS_TLMS, BLN_Selecta} for further reading.

Pointwise convergence along curves has been extensively studied in the single-particle setting. 
Beyond the works of \cite{LR,CLV}  there has been substantial further development in dimension one. We refer interested readers to \cite{ChoShi} for a more comprehensive overview.
In higher dimensions, the works \cite{CM, Ming, MSV} establish tangential convergence along $\alpha$-H\"older curves.

The tangential case for orthonormal systems, such as $\zeta(x,t)=x-t^\kappa$, $0<\kappa<1$, in $\R$ would require an orthonormal analogue of the tangential maximal estimates of \cite{CLV}, and is not a direct consequence of the equivalence principle proved here. This case is treated in \cite{JK26} in one dimension, for fractional Schr\"odinger operators along H\"older curves.

\subsection*{Organization of the paper}
The paper is organized as follows. In Section \ref{sub:twisted}, we establish the pointwise identity for the special Hermite propagator and prove Theorems \ref{thm:twisted-single} and \ref{thm:on-max-gen}. Section \ref{sec:4} proves the equivalence on superlocal time intervals and reduces Theorem \ref{thm:equiv} to the time localization result proved in Section \ref{sec:5}. In Section~\ref{sec:example}, we combine the single-particle and many-particle examples to prove the necessary condition $s\ge\sigma(d,\beta)$ in Theorem \ref{thm:ex}. Section \ref{sec:7} derives Strichartz estimates from the pointwise identity and the equivalence theorem. Finally, Appendix \ref{sec:known} verifies the $L_x^\beta$ form of the known free estimate stated in Theorem \ref{thm:max}.

\subsection*{Notation}  
Throughout the paper $A\lesssim B$ denotes $A\le CB$ for some constant independent of the relevant parameters.

\section{Maximal estimates: Proofs of Theorem~\ref{thm:twisted-single} and Theorem ~\ref{thm:on-max-gen}}\label{sub:twisted}

We first prove the single-function maximal estimates for $e^{-it\mathcal L}$ in Theorem \ref{thm:twisted-single}. 
We then prove the orthonormal maximal estimates in Theorem \ref{thm:on-max-gen} assuming the equivalence principle in Theorem \ref{thm:equiv}.

\subsection{Proof of Theorem \ref{thm:twisted-single}}\label{single function}
We use the following equivalence principle of Cho--Lee--Vargas \cite{CLV}.
\begin{thm}[{\cite[Theorem 1.6]{CLV}}]\label{thm:CLV} 
Let $n\ge1$, $q,r\in [2,\infty]$ and $s_0\in \R$. Suppose that $\zeta$ and $\fa$ satisfy  \eqref{deriv-a}, \eqref{lip-const}, and \eqref{zeta-holder}, with $d$ replaced by $n$. Then 
\[ \|e^{it \Delta} f\|_{L^q_z(B_1^n; L^r_t(I))}\le C \|f\|_{H^s(\R^n)}\]
    holds for all $s> s_0$ if and only if
\[ \|S^\zeta_{\fa} f\|_{L^q_z(B_1^n;L^r_t(I))}\le C \|f\|_{H^s(\R^n)}\]
holds for all $s>s_0$.    
\end{thm}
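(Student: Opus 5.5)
The plan is to reduce both directions to the same statement on \emph{superlocal} time intervals, following the time localization scheme of Cho--Lee--Vargas. By Littlewood--Paley decomposition and the $\epsilon$-room in ``for all $s>s_0$'', it suffices to prove, for each $k\ge0$ and $f$ with $\supp\widehat f\subset\{|\xi|\sim 2^k\}$, that the frequency-localized bound $\|e^{it\Delta}f\|_{L^q_z(B_1^n;L^r_t(I))}\lesssim 2^{k(s_0+\epsilon)}\|f\|_2$ holds if and only if the same bound holds for $S^\zeta_{\fa}f$. The low frequencies are trivial, and summing over $k$ costs only the $\epsilon$.

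\textbf{Step 1 (comparison on intervals of length $2^{-k}$).} Fix $J=[t_0,t_0+c2^{-k}]\subset I$. Taylor expansion gives $\fa_j(t)=\fa_j(t_0)+\fa_j'(t_0)(t-t_0)+O(2^{-2k})$, and \eqref{zeta-holder} gives $\zeta_t(z)=\zeta_{t_0}(z)+O(2^{-k})$. Hence on $J$, at frequency $2^k$, both error phases $\sum_j O(2^{-2k})\xi_j^2$ and $(\zeta_t(z)-\zeta_{t_0}(z))\cdot\xi$ are $O(1)$.
\begin{itemize}
\item I expand $e^{i(\text{error})}$ as a rapidly convergent series of products of bounded functions of $(z,t)$ and smooth Fourier multipliers in $\xi/2^k$. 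This leaves $e^{i\zeta_{t_0}(z)\cdot\xi-i\sum_j\fa_j'(t_0)(t-t_0)\xi_j^2}$ applied to a modified datum $\widetilde f$ with $\|\widetilde f\|_2\lesssim\|f\|_2$; the constant phase $e^{-i\sum_j\fa_j(t_0)\xi_j^2}$ is absorbed as a unitary multiplier.
\item The anisotropic rescaling $\xi_j\mapsto \fa_j'(t_0)^{-1/2}\xi_j$ turns this into $e^{i(t-t_0)\Delta}$ evaluated at $\Phi(\zeta_{t_0}(z))$, with $\Phi$ a fixed linear map. The scaling costs only constants depending on $A_0$ and $\sup|\fa'|$, and keeps the frequency $\sim2^k$.
\item The bi-Lipschitz change of variables $z\mapsto \Phi(\zeta_{t_0}(z))$ from \eqref{lip-const} has Jacobian bounded above and below. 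It maps $B_1^n$ into a ball of radius $\lesssim D_0$, which is covered by $O(1)$ unit balls; translation invariance of $e^{it\Delta}$ handles each ball.
\end{itemize}
The conclusion is that, uniformly in $t_0$, the $L^q_z(B_1^n;L^r_t(J))$ norms of the two propagators at frequency $2^k$ are mutually dominated up to constants. The reverse direction uses the same expansion, inverting the changes of variables.

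\textbf{Step 2 (time localization lemma).} For each of the two operators at frequency $2^k$, I want to show that the estimate on $I$ with bound $2^{k(s_0+\epsilon)}$ is equivalent, up to $2^{\epsilon k}$ losses, to the estimate on every interval $J$ of length $2^{-k}$ with the same bound. One direction is trivial by restriction: for the free flow, apply it to $e^{it_0\Delta}f$, and for $S^\zeta_{\fa}$ the analogous shifted datum. For the other direction I partition $I=\bigcup_l J_l$ into $\sim2^k$ intervals. Simply summing the local bounds in $L^r_t$ would lose $2^{k/r}$, so this is where the real work lies.

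I would imitate Lee's argument for the free flow and adapt it through Step 1. Decompose the datum $f=\sum_Q f_Q$ spatially and in frequency into wave packets at scale $2^{-k/2}$. Stationary phase then shows that, up to $O(2^{-Nk})$ tails, the piece contributing to $B_1^n$ during $J_l$ comes from data localized to a region of size $\sim1$ determined by $t_l$ and the frequency direction. Pieces for different $l$ and nearly orthogonal frequency directions are then recombined by $\ell^2$-orthogonality of the data, using $q,r\ge2$ and Minkowski's inequality to place the $\ell^2$ sum outside. The alternative I would try first is a parabolic rescaling for the free flow: the interval $[0,1]$ at frequency $2^k$ is mapped to an interval of length $2^{-k}$ at frequency $2^{k}$ on a ball of radius $2^{k/2}\cdot$, which is covered by unit balls. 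This is Lee's observation, and Step 1 would transfer it to $S^\zeta_{\fa}$.

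\textbf{Step 3 (conclusion).} Combining the two steps, the free estimate on $I$ gives the free estimate on all $J$ (Step 2, trivial direction), hence the estimate for $S^\zeta_{\fa}$ on all $J$ (Step 1), hence the estimate for $S^\zeta_{\fa}$ on $I$ (Step 2, hard direction). The converse direction is symmetric. I expect the main obstacle to be the hard direction of Step 2, and specifically making the gluing work for $S^\zeta_{\fa}$, where there is no exact translation or Galilean invariance. I would handle this by proving the gluing only for the free flow and transferring it through the Step 1 comparison interval by interval.
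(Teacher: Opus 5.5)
Your Step~1 and the logic of Step~3 follow the paper's route. Compare Lemma~\ref{lem:ap} and Proposition~\ref{lem:local equiv}: Fourier-series expansion of the error phase, freezing $\zeta$ at $t_0$, the rescaling $M_\fa$, a bi-Lipschitz change of variables and a finite covering. The gap is the hard direction of Step~2, and it matters most for $S^\zeta_{\fa}$.

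In the direction ``free on $I$ $\Rightarrow$ $S^\zeta_\fa$ on $I$'', your chain never uses gluing for the free flow; it needs gluing for $S^\zeta_\fa$. Transferring a free gluing lemma ``interval by interval'' through Step~1 does not supply this. On $J_l$, Step~1 expresses $S^\zeta_\fa P_kf$ through $e^{i(t-t_l)\Delta}$ applied to data $Q_m^{(l)}f$ that depend on $l$, via the multiplier $e^{-i\sum_j(\fa_j(t_l)-t_l\fa_j'(t_l))\xi_j^2}$ and the Fourier coefficients. These are evaluated at the $l$-dependent points $M^{(l)}_\fa\zeta_{t_l}(z)$. A gluing lemma for $e^{it\Delta}$ concerns a single datum observed at a fixed point over all of $I$, so it says nothing about this family. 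Making it apply would require almost-orthogonality across $l$ for $S^\zeta_\fa$ itself, which is what you are trying to prove.

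The two arguments you sketch for the free gluing are also not sound as written. Parabolic rescaling $(x,t,\xi)\mapsto(\lambda x,\lambda^2t,\lambda^{-1}\xi)$ multiplies (frequency)$\times$(time length) by $\lambda$. Shrinking $[0,1]$ to length $2^{-k}$ therefore forces $\lambda=2^{-k/2}$, giving frequency $2^{3k/2}$ on a ball of radius $2^{-k/2}$, not frequency $2^k$ on a ball of radius $2^{k/2}$. For the wave-packet sketch, take packets of spatial width $2^{-k/2}$ (frequency width $2^{k/2}$). The data feeding $B_1^n$ at a time $t_l\sim1$ then fill a set of diameter $\sim2^{k/2}$, not $\sim1$, so the claimed phase-space separation between different $l$ is not obtained.

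The missing idea is that gluing needs no translation or Galilean invariance and can be done directly for $S^\zeta_\fa$ by a $TT^*$ argument. For $z,z'\in B_1^n$ and $|t-t'|\ge C_02^{-k}$, the kernel of $(S^\zeta_\fa P_k)(S^\zeta_\fa P_k)^*$ is $O_N(2^{-Nk})$. After $\xi\to2^k\xi$, the $\xi$-gradient of the quadratic part of the phase has size $\gtrsim A_02^{2k}|t-t'|$ by \eqref{deriv-a} and the mean value theorem. This dominates the gradient of the linear part, of size $2^k|\zeta_t(z)-\zeta_{t'}(z')|\le 3D_02^k$. This is Lemma~\ref{lem:ker}, following \cite[Lemma 2.1]{CLV}.

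Now write $TT^*=\sum_{l,l'}\chi_{J_l}TT^*\chi_{J_{l'}}$ with $T=S^\zeta_\fa P_k$.
The terms with $|l-l'|>C$ are negligible on the bounded set $B_1^n\times I$, since the superlocal constant is $\gtrsim 2^{-N_0k}$ for some $N_0$. Each term with $|l-l'|\le C$ is bounded by $\|\chi_{J_l}T\|\,\|\chi_{J_{l'}}T\|\,\|\chi_{J_{l'}}F\|_{L^{q'}_zL^{r'}_t}$, using the superlocal bounds that Step~1 gives on every $J$, uniformly in $t_0$. Since the pieces have disjoint time supports, they sum using $\ell^{\max(q',r')}\subset\ell^{\min(q,r)}$, which is exactly where $q,r\ge2$ enters.

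The same argument with $\zeta_t(z)=z$ and $\fa(t)=(t,\dots,t)$ handles $e^{it\Delta}$, so both directions close without any transfer. Proposition~\ref{lem:glob2} is the orthonormal version of this decomposition.
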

\begin{proof}[Proof of Theorem~\ref{thm:twisted-single}]
Let $n=2d$. Fix $0<T_{\mathcal L}<\pi/2$ and set $I_{\mathcal L}=[0, T_{\mathcal L}]$. For $f\in\mathcal{S}(\R^{n})$ and $t\in[0,\frac\pi2)$, we claim that
    \begin{equation}\label{id-special}
    e^{-it\mathcal L}f(z)
=
(1+\tan^2 t)^{\frac d2} e^{-i(\tan t)\frac{|z|^2}4}e^{i(\tan t)\Delta}f\big(z-(\tan t)Jz\big),
    \end{equation}
    where 
    \[
J=\begin{pmatrix} 0 & -I_d \\ I_d & 0\end{pmatrix}.
\]
Here $I_d$ is the identity matrix on $\R^d$.
Set $\fa(t)=(\tan t,\cdots,\tan t)$ and $\zeta_t(z)= z - (\tan t)Jz$. 
The identity \eqref{id-special} gives
\begin{align*}
\big\| e^{-it\mathcal L}f \big\|_{L_z^q(B_1^n;L_t^r(I_{\mathcal L}))}
\lesssim
\big\| S_{\fa}^\zeta f \big\|_{L_z^q(B^n_1; L_t^r(I_{\mathcal L}))}.
\end{align*}

Assuming \eqref{id-special} for the moment, we verify \eqref{deriv-a}, \eqref{lip-const}, and \eqref{zeta-holder}. 
The condition \eqref{deriv-a} holds since $\fa \in C^2(I_{\mathcal L})$ satisfies $\fa_j'(t)=\sec^2t \geq 1$ on $I_{\mathcal L}$. 
Next, since $J^{\top} J=I_{2d}$, we have $|\zeta_t(z)-\zeta_t(w)|=(1+\tan^2t)^{\frac12} |z-w|$, which proves the bi-Lipschitz condition \eqref{lip-const}. 
Finally, for $t,t'\in I_{\mathcal L}$ and $z\in B_1^n$,
\[
|\zeta_t(z)-\zeta_{t'}(z)|=|\tan t-\tan t'||Jz|
\leq 
\sec^2(T_{\mathcal L})|t-t'|
\]
by the mean value theorem. 
By Remark \ref{rmk:normalize time}, we may rescale $[0,T_\mathcal L]$ to $I=[0,1]$ and apply Theorem \ref{thm:CLV}. 
Taking $(q,r)=(2,\infty)$ and $s_0=\frac{n}{2(n+1)}$, Theorems \ref{thm:CLV} and \ref{t:max schrodinger single} give the desired bound.

It remains to verify \eqref{id-special}. Let $0<t<\pi/2$.
We recall the explicit formulas for the free Schr\"odinger propagator
$e^{it\Delta}$ and the special Hermite propagator $e^{-it\mathcal L}$ on $\R^{2d}$.
For the Schr\"odinger propagator, we have (see \cite[p. 1370]{LR})
\begin{equation}\label{classic2}
e^{it\Delta}f(z) = e^{-i \frac {\pi d}2} (4\pi t)^{-d} \int_{\R^{2d}} e^{i \frac{|z-z'|^2}{4t}} f(z') dz'.
\end{equation}
On the other hand, Mehler's formula gives
\begin{equation}\label{tw}
e^{-it\mathcal L}f(z)
=
e^{-i\frac{\pi d}2}(4\pi\sin t)^{-d}
\int_{\mathbb{R}^{2d}}
e^{\,i(\frac{|z-z'|^2}{4}\cot t-\frac12 z^\top Jz')}
f(z')\,dz'.
\end{equation}
See \cite[p. 21]{Thangavelu2} and also \cite{Thangavelu, ST, JLR}.
Set $\tau=\tan t$. Since $\sin^2t=\tau^2/(1+\tau^2)$, \eqref{tw} becomes
\begin{equation}\label{e:sht}
e^{-i  t \mathcal L} f(z) 
=e^{-i \frac{\pi d}2} (1+\tau^2)^{\frac d2}(4\pi \tau)^{-d} \int_{\R^{2d}} e^{i (\frac{|z-z'|^2}{4\tau} -\frac12 z^{\top} J z' )} f(z') dz'.
\end{equation}
We now rewrite the phase function.
Since $\langle z,J z\rangle=0$ and $J^\top J=I_{2d}$, we have
\[\frac{|z-z'|^2}{4\tau}-\frac12z^\top J z' =\frac{|z-\tau J z-z'|^2}{4\tau} -\frac{\tau|z|^2}4.\]
Substituting this identity into \eqref{e:sht} and using \eqref{classic2}, we obtain
\begin{align*}
e^{-i t \mathcal L} f(z) 
&=  (1+\tau^2)^{\frac d2} e^{-i\frac{\tau|z|^2}4} e^{-i \frac{\pi d}2} (4\pi \tau)^{-d}\int_{\R^{2d}} e^{i\frac{|z-\tau J z-z'|^2}{4\tau}} f(z') dz' \\
&=  (1+\tau^2)^{\frac d2} e^{-i\frac{\tau|z|^2}4}  e^{i\tau \Delta} f(z- \tau J z).
\end{align*}
Since $\tau=\tan t$, this is precisely \eqref{id-special}.
This completes the proof.
\end{proof}

\subsection{Duality}

We record the duality principle of Frank--Sabin \cite{FS_AJM}, in the mixed norm form used below. See also \cite{BHLNS}. It relates orthonormal estimates to Schatten bounds.

\begin{lem}[Duality]\label{lem:dual}
Suppose $T$ is a bounded operator from $L^2$ to $L_x^qL_t^r$ for some $q,r\ge2$. For $\beta\ge1$, the following are equivalent.
\begin{enumerate}[(i)]
\item
For all orthonormal initial data $(f_j)\subset L^2$ and $\lambda=(\lambda_j)\in \ell^\beta$,
\[
\Big\| \sum_j \lambda_j |T f_j|^2 \Big\|_{L_x^{\frac q2}L_t^{\frac r2}} \le C \|\lambda\|_{\ell^\beta}.
\]
\item
For any $W\in L_x^{\tilde q}L_t^{\tilde r}$,
\begin{align*}
\big\| WTT^*\overline W\|_{\mathcal C^{\beta'}}
\le C' \|W\|^2_{L_x^{\tilde q}L_t^{\tilde r}}.
\end{align*}
Here, $\beta'$ denotes the H\"older conjugate of $\beta$, and $\tilde q$ and $\tilde r$ satisfy
\begin{align}\label{tilde-qr}
\frac 1{\tilde q}+\frac 1q=\frac 12\quad\text{and}\quad \frac 1{\tilde r}+\frac 1r=\frac 12.
\end{align}
\end{enumerate}
Moreover, the optimal constants $C$ and $C'$ coincide. In addition, the same remains valid if the mixed norm spaces $L^{\frac q2}_xL^{\frac r2}_t$ and $L^{\tilde q}_x L^{\tilde r}_t$ are replaced by $L^{\frac r2}_tL^{\frac q2}_x$ and $L^{\tilde r}_tL^{\tilde q}_x$, respectively.
\end{lem}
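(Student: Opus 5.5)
The plan follows the Frank--Sabin argument, now in mixed norms. Write $\|\cdot\|_{X}$ for $L^{\frac q2}_xL^{\frac r2}_t$ and $\|\cdot\|_{\widetilde X}$ for $L^{\tilde q}_xL^{\tilde r}_t$, and set $p=\frac q2$, $\rho=\frac r2$, so that $p,\rho\ge1$. The first observation is that \eqref{tilde-qr} makes $|W|^2$ range over the dual space of $X$: $\||W|^2\|_{L^{p'}_xL^{\rho'}_t}=\|W\|^2_{\widetilde X}$, since $\frac1{p'}=1-\frac2q=\frac2{\tilde q}$ and similarly in $t$. Hence, by duality of mixed Lebesgue spaces (Benedek--Panzone; for the boundary cases $p=1$ or $\rho=1$ the dual of $L^1$ is $L^\infty$, and we only need the norming direction),
\[
\Big\|\sum_j\lambda_j|Tf_j|^2\Big\|_{X}=\sup_{\|W\|_{\widetilde X}\le1}\Big|\int |W|^2\sum_j\lambda_j|Tf_j|^2\Big|.
\]
Here we may take $W\ge0$, using positivity of $|Tf_j|^2$ after splitting $\lambda$ into real and imaginary parts and then positive and negative parts; this is harmless up to constants, or exactly if one dualizes against a general complex $V=|W|^2e^{i\theta}$ and keeps track of the phase.

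Next, fix $W\in\widetilde X$. Boundedness of $T:L^2\to L^q_xL^r_t$ and H\"older's inequality give that $\overline W T:L^2\to L^2_{x,t}$ is bounded, so $WTT^*\overline W=(\overline W T)^*{}^*\cdots$ is bounded on $L^2_{x,t}$, with $(\overline WT)^*=T^*W$. Set $\gamma=\sum_j\lambda_j|f_j\rangle\langle f_j|$, which has $\|\gamma\|_{\mathcal C^\beta}=\|\lambda\|_{\ell^\beta}$ by orthonormality. The key identity is
\[
\int |W|^2\sum_j\lambda_j|Tf_j|^2=\sum_j\lambda_j\langle \overline W Tf_j,\overline W Tf_j\rangle=\operatorname{Tr}\big(\overline WT\gamma T^*W\big)=\operatorname{Tr}\big(\gamma\,(T^*W)(\overline W T)\big).
\]
This is first justified for finitely many $j$ and then extended by monotone convergence and density. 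Schatten H\"older's inequality then bounds the trace by $\|\gamma\|_{\mathcal C^\beta}\|T^*|W|^2T\|_{\mathcal C^{\beta'}}$. Finally, $T^*|W|^2T=A^*A$ and $WTT^*\overline W=AA^*$ with $A=\overline WT$, and these share nonzero singular values, so their $\mathcal C^{\beta'}$ norms agree. This proves (ii)$\Rightarrow$(i) with $C\le C'$.

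For (i)$\Rightarrow$(ii), I would use the duality $\mathcal C^{\beta'}=(\mathcal C^{\beta})^*$, with $\mathcal C^\infty$ paired against $\mathcal C^1$ in the case $\beta=1$. Write
\[
\|A^*A\|_{\mathcal C^{\beta'}}=\sup\big\{|\operatorname{Tr}(\gamma A^*A)|:\ \gamma\ \text{finite rank},\ \|\gamma\|_{\mathcal C^\beta}\le1\big\}.
\]
Since $A^*A$ is positive, it suffices to take $\gamma$ self-adjoint. Spectrally decomposing $\gamma=\sum\lambda_j|f_j\rangle\langle f_j|$ with $(f_j)$ orthonormal, the identity above together with H\"older in mixed norms and (i) gives
\[
|\operatorname{Tr}(\gamma A^*A)|\le\||W|^2\|_{X^*}\Big\|\sum\lambda_j|Tf_j|^2\Big\|_X\le C\|W\|_{\widetilde X}^2.
\]
So $C'\le C$, and the constants coincide. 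The argument for the order $L^{\frac r2}_tL^{\frac q2}_x$ is verbatim, with the mixed-norm duality taken in that order.

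The main obstacle is not algebraic but technical. First, the justification of traces for infinite sums, and of the fact that $A^*A$ lies in $\mathcal C^{\beta'}$ before we know it is bounded there, has to be handled by truncating to finite rank and using lower semicontinuity of Schatten norms. Second, the mixed-norm duality must be handled at the endpoint exponents ($q$ or $r$ equal to $2$ or $\infty$). I would resolve both by working with finite-rank $\gamma$ and bounded compactly supported $W$ throughout, and then passing to limits.
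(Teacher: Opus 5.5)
Your proposal is correct, and it is exactly the Frank--Sabin duality argument: the trace identity $\int V\sum_j\lambda_j|Tf_j|^2=\operatorname{Tr}(\gamma T^*VT)$, Schatten H\"older, the duality $(\mathcal C^{\beta})^*=\mathcal C^{\beta'}$ tested on finite-rank positive $\gamma$, and the equality of the $\mathcal C^{\beta'}$ norms of $A^*A$ and $AA^*$; the paper gives no proof of its own and simply cites Frank--Sabin and Bez et al. for this lemma. Two small points. To get equality of the optimal constants you need the complex-$V$ (phase-tracking) route, not the splitting of $\lambda$ into positive parts. Also, with $A=\overline W T$ one gets $AA^*=\overline W TT^*W$, so you should take $A=WT$ instead (equivalently, replace $W$ by $\overline W$, which is harmless).
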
 

When $\beta=1$, the dual estimate is understood as an operator norm bound, without a compactness requirement.
We write $\mathcal B(L^2(\R^d))$ for the space of bounded operators on $L^2(\R^d)$. We also record the following consequence of Lemma \ref{lem:dual}, which will be used repeatedly.

\begin{lem}\label{lem:TB-T1}
Let $q,r\in[2,\infty]$ and $\beta\in[1,\infty]$.
Let $Y=L_x^{q/2}L_t^{r/2}$ or $Y=L_t^{r/2}L_x^{q/2}$, and suppose that $T$ is a bounded operator from $L^2(\R^d)$ to $L_x^qL_t^r$ or $L_t^rL_x^q$, respectively.
Let $Q\in\mathcal B(L^2(\R^d))$. If
\begin{align}\label{assume-T}
\big\|\sum_j \lambda_j|Tf_j|^2 \big\|_{Y} \le C\|\lambda\|_{\ell^\beta}
\end{align}
holds for all orthonormal systems $(f_j) \subset L^2(\R^d)$ and all $\lambda=(\lambda_j)\in \ell^\beta$, then
\begin{equation}\label{result-TQ}
\big\|\sum_j \lambda_j|TQf_j|^2 \big\|_{Y} \le C\|Q\|_{\mathcal B(L^2)}^2\|\lambda\|_{\ell^\beta}
\end{equation}
holds for all orthonormal systems $(f_j) \subset L^2(\R^d)$ and all $\lambda=(\lambda_j)\in \ell^\beta$.
\end{lem}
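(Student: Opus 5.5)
The most direct route is to avoid the duality principle and work with the orthonormal system itself, reducing \eqref{result-TQ} to the hypothesis \eqref{assume-T} applied to a new orthonormal family. Let $(f_j)$ be orthonormal in $L^2(\R^d)$ and $\lambda\in\ell^\beta$. We may assume $Q\neq0$ and normalize $\|Q\|_{\mathcal B(L^2)}=1$ by homogeneity, since both sides of \eqref{result-TQ} scale like $\|Q\|^2$. By monotone convergence in $Y$ we may also assume $\lambda$ is finitely supported. The quantity $\sum_j\lambda_j|TQf_j|^2$ is the density of the operator $TQ\gamma Q^*T^*$, where $\gamma=\sum_j\lambda_j|f_j\rangle\langle f_j|$.

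The key step is to rediagonalize $Q\gamma Q^*=\sum_j\lambda_j|Qf_j\rangle\langle Qf_j|$. If $\lambda_j\ge0$ for all $j$, this operator is finite rank and self-adjoint, so it can be written as $\sum_k\mu_k|g_k\rangle\langle g_k|$ with $(g_k)$ orthonormal. Pointwise on the diagonal this gives
\[
\sum_j\lambda_j|TQf_j|^2=\sum_k\mu_k|Tg_k|^2 .
\]
The $\mu_k$ are the eigenvalues of $Q\gamma Q^*$. Since $\|Q\gamma Q^*\|_{\mathcal C^\beta}\le\|Q\|^2\|\gamma\|_{\mathcal C^\beta}=\|\lambda\|_{\ell^\beta}$ by the ideal property of Schatten classes, we get $\|\mu\|_{\ell^\beta}\le\|\lambda\|_{\ell^\beta}$. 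Applying \eqref{assume-T} to $(g_k)$ then yields \eqref{result-TQ}.

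The delicate point is the justification of the displayed identity, because $T$ maps only into $L_x^qL_t^r$. We verify it by writing $Q\gamma Q^*=\sum_k\mu_k|g_k\rangle\langle g_k|$ as an identity of finite-rank operators and comparing the kernels of $T(\cdot)T^*$ on the diagonal. Concretely, $Tg_k$ is a finite linear combination of the $TQf_j$ and vice versa, and the identity $\sum_j\lambda_j u_j\otimes\bar u_j=\sum_k\mu_kv_k\otimes\bar v_k$, with $u_j=Qf_j$ and $v_k=g_k$, in the finite-dimensional span transfers to $T u_j$ and $T v_k$ by linearity. The identity therefore holds almost everywhere.

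For complex $\lambda$ the reduction needs one more step. First split $\lambda=\lambda^{(1)}-\lambda^{(2)}+i(\lambda^{(3)}-\lambda^{(4)})$ with nonnegative parts, each of $\ell^\beta$ norm at most $\|\lambda\|_{\ell^\beta}$. The triangle inequality in $Y$ then gives \eqref{result-TQ} with constant $4C$. To keep the constant exactly $C$, one instead normalizes the left side via the duality of Lemma~\ref{lem:dual}: one has $W TQQ^*T^*\overline W=(WTQ)(WTQ)^*$, and for $\beta'\ge1$ this satisfies
\[
\|(WTQ)(WTQ)^*\|_{\mathcal C^{\beta'}}=\|(WTQ)^*(WTQ)\|_{\mathcal C^{\beta'}}\le\|Q\|^2\|(WT)^*(WT)\|_{\mathcal C^{\beta'}}=\|Q\|^2\|WTT^*\overline W\|_{\mathcal C^{\beta'}} .
\]
By Lemma~\ref{lem:dual} this gives the sharp constant in all cases, including $\beta=1$, where the bounds are operator-norm bounds. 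I would present this duality argument as the main proof and keep the rediagonalization as intuition. The only obstacle I anticipate is justifying the Schatten identity $\|AA^*\|_{\mathcal C^p}=\|A^*A\|_{\mathcal C^p}$ and the ideal inequality $\|Q^*BQ\|_{\mathcal C^p}\le\|Q\|^2\|B\|_{\mathcal C^p}$, both of which are standard.
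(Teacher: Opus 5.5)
Your main argument is the paper's proof. Lemma~\ref{lem:dual} reduces the claim to a Schatten bound, and your chain $\|(WTQ)^*(WTQ)\|_{\mathcal C^{\beta'}}=\|Q^*(WT)^*(WT)Q\|_{\mathcal C^{\beta'}}\le\|Q\|^2\|WTT^*\overline W\|_{\mathcal C^{\beta'}}$ is the same H\"older/ideal estimate that the paper writes as $\|WTQ\|_{\mathcal C^{2\beta'}}^2\le\|Q\|_{\mathcal B(L^2)}^2\|WT\|_{\mathcal C^{2\beta'}}^2$. Your rediagonalization sketch is also sound: for real $\lambda$ it only needs $Q\gamma Q^*$ to be self-adjoint, not positive. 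As you note, though, it loses a constant factor for complex $\lambda$, so the duality route is the right one to present.
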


\begin{proof}
Let $\widetilde Y$ be $L_x^{\tilde q}L_t^{\tilde r}$ when $Y=L_x^{q/2}L_t^{r/2}$ and $\widetilde Y=L_t^{\tilde r}L_x^{\tilde q}$ when $Y=L_t^{r/2}L_x^{q/2}$  with $\tilde q, \tilde r$ defined in \eqref{tilde-qr}. 
By Lemma \ref{lem:dual}, \eqref{assume-T} and \eqref{result-TQ} are equivalent to
\begin{align}\label{assume-WT}
\|WTT^*\overline{W}\|_{\mathcal C^{\beta'}}\le C\|W\|_{\widetilde Y}^2
\end{align}
and
\begin{equation}\label{result-WTQ}
\|WTQ(TQ)^*\overline{W}\|_{\mathcal C^{\beta'}}\le C\|Q\|_{\mathcal B(L^2)}^2\|W\|_{\widetilde Y}^2, 
\end{equation}
respectively.
Therefore, it suffices to show \eqref{result-WTQ} assuming \eqref{assume-WT}.

By H\"older's inequality in the Schatten spaces (\cite[Theorem 2.7]{Simon}), we obtain
\begin{align*}
\|WTQ(TQ)^*\overline{W}\|_{\mathcal C^{\beta'}}=\|WTQ\|_{\mathcal C^{2\beta'}}^2
\le 
\|Q\|_{\mathcal{B}(L^2)}^2\|WT\|_{\mathcal C^{2\beta'}}^2
=\|Q\|_{\mathcal{B}(L^2)}^2\|WTT^*\overline{W}\|_{\mathcal C^{\beta'}}.
\end{align*}
Applying \eqref{assume-WT} proves \eqref{result-WTQ}.
\end{proof}

\subsection{Proof of Theorem \ref{thm:on-max-gen}}\label{sec:proof:on-gen}
We now prove Theorem \ref{thm:on-max-gen} by combining the equivalence principle in Theorem \ref{thm:equiv} with  the orthonormal maximal estimates for the Schr\"odinger operator in Theorem \ref{thm:max}.

As discussed in \cite[Lemma 4.1]{CLV} and the subsection~\ref{single function},
both $e^{-it\mathcal H_\omega}$ and $e^{-it\mathcal L}$ can be represented,
locally in time, in terms of $S_{\fa}^\zeta$ for suitable choices of
$\fa$ and $\zeta$.
Hence, it suffices to prove the desired estimates \eqref{S-op} for $S_{\fa}^\zeta$ instead of $e^{-it\mathcal A}$.

Let $1\le\beta\le2$. Applying Theorem \ref{thm:max} in dimension $n$ and Theorem \ref{thm:equiv} with $(q,r)=(2\beta,\infty)$,  we obtain
\begin{equation}\label{betabeta}
\Big\| \sup_{t\in I} \Big| \sum_j \lambda_j \big| S_{\fa}^\zeta \langle D\rangle^{-s} g_j \big|^2 \Big| \Big\|_{L_x^{\beta}(B_1^n)}
\lesssim \|\lambda\|_{\ell^\beta}
\end{equation}
for every $s>s(n,\beta)$ (cf. \eqref{s0nbeta}), every orthonormal system $(g_j)\subset L^2(\R^n)$,
and every $(\lambda_j)\in \ell^\beta$.

Next, consider $\beta=\infty$. For every $(x,t)\in B_1^n\times I_{\mathcal A}$, we
define $\overline{h_{x,t}}(\xi)=e^{i\zeta_t(x)\cdot \xi-i\sum_{l=1}^n \fa_l(t)\xi_l^2}\langle \xi \rangle^{-s}$.
Then $S_{\fa}^\zeta \langle D\rangle^{-s} g_j= (2\pi)^{-n}\langle \,\widehat{g_j}\,, h_{x,t}\rangle$. For any orthonormal system $(g_j) \subset L^2(\R^n)$, $((2\pi)^{-n/2}\widehat{g_j})$ is also an orthonormal system in $L^2(\R^n)$ by Plancherel's theorem. Hence,
Bessel's inequality (cf. \cite{BHLNS}) gives, whenever $s>n/2$,
\begin{align*}
\sum_j|S_{\fa}^\zeta \langle D\rangle^{-s} g_j(x,t)|^2
\lesssim
\|h_{x,t}\|_2^2
\lesssim \|\langle \xi \rangle^{-s}\|_2^2 <\infty.
\end{align*}
Moreover, for every $(\lambda_j)\in\ell^\infty$,
\begin{align*}
\Big|\sum_j\lambda_j|S_{\fa}^\zeta\langle D\rangle^{-s}g_j(x,t)|^2\Big|
\le \|\lambda\|_{\ell^\infty}
\sum_j|S_{\fa}^\zeta\langle D\rangle^{-s}g_j(x,t)|^2
\lesssim \|\lambda\|_{\ell^\infty}.
\end{align*}
Consequently, \eqref{betabeta} holds with $\beta=\infty$ and $s>\frac n2$.

Finally, let $2< \beta<\infty$. Fix $\varepsilon>0$. To interpolate between the estimates \eqref{betabeta} at $\beta=2$ and $\beta=\infty$, set $s(2)=\frac n4+\varepsilon$ and $s(\infty)=\frac n2+\varepsilon$, and consider the analytic family $T_z = S_\fa^\zeta \langle D \rangle^{-s(z)}$
where $s(z)=(1-z)s(2)+zs(\infty)$ for $0\le \Re(z)\le 1$.
By Lemma \ref{lem:dual}, \eqref{betabeta} at $\beta=2$ and $\beta=\infty$ yield, respectively,
\begin{align}\label{dual interpol}
\|WT_{i\tau}\|_{\mathcal C^4}\lesssim\|W\|_{L_x^4L_t^2},
\qquad
\|WT_{1+i\tau}\|_{\mathcal C^2}\lesssim\|W\|_{L_x^2L_t^2},
\end{align}
uniformly in $\tau\in\mathbb R$.\footnotemark 
\footnotetext{Indeed, note that $T_{i\tau}=T_0Q_\tau$ and $T_{1+i\tau}=T_1Q_\tau$ for $Q_\tau=\langle D \rangle^{-i\tau(s(\infty)-s(2))}$ where $T_0=S_{\fa}^\zeta \langle D\rangle^{-s(2)}$ and $T_1=S_{\fa}^\zeta \langle D\rangle^{-s(\infty)}$.
Since $Q_\tau Q_\tau^*=I$ on $L^2(\R^n)$, we have
$
(WT_{i\tau})(WT_{i\tau})^*
=WT_0T_0^*\overline W,
$
and the estimate \eqref{betabeta} with $\beta=2$ and duality gives the estimate for $WT_{i\tau}$ in \eqref{dual interpol}. A similar computation applies to $WT_{1+i\tau}$.
}
Applying complex interpolation to the analytic family $W\rightarrow WT_z$ gives
\[
\|WT_\theta\|_{\mathcal C^{2\beta'}}
\lesssim\|W\|_{L_x^{2\beta'}L_t^2}
\]
where $\theta=1-\frac 2\beta$ and $\frac{1}{2\beta'}=\frac{1-\theta}4+\frac{\theta}2$.
Here, we use $[L_x^4L_t^2,L_x^2L_t^2]_\theta
=L_x^{2\beta'}L_t^2$ and $[\mathcal C^4,\mathcal C^2]_\theta
=\mathcal C^{2\beta'}$ (see Theorem 2.9 and Remark 1 in p.23 in \cite{Simon}).
Since $(1-\theta)s(2)+\theta s(\infty)=\frac n2(1-\frac1\beta)+\varepsilon$, another application of Lemma~\ref{lem:dual} proves \eqref{betabeta} for every $s>\frac n2(1-\frac1\beta)=s(n,\beta)$ and $\beta\in [2,\infty]$. Combining this with the case $\beta \in [1,2]$ proved above completes the proof.
 \qed

\section{Proof of Theorem \ref{thm:equiv}}\label{sec:4}
\subsection{Reductions to frequency localized estimates}
We now reduce Theorem \ref{thm:equiv} to an equivalence at each frequency-localized scale. 
Choose $\phi_0\in C_0^\infty(B^d_2)$ equal to $1$ on $B_1^d$, and $\phi(\xi)=\phi_0(\xi)-\phi_0(2\xi)$.
Then $\phi \in C^\infty_0(B^d_2\setminus B^d_{1/2})$ and $\phi_0(\xi)+\sum_{k\ge1}\phi(2^{-k}\xi)=1$ for $\xi \neq0$. Define the projection operator $P_0$ and $P_k$, $k\ge1$, by $\widehat{P_0f}=\phi_0\widehat f$ and $\widehat{P_kf}=\phi(2^{-k}\cdot)\widehat f$. 

\begin{prop}\label{prop:reduction}
Let $s\in\mathbb R$, $q,r\in[2,\infty]$, and $\beta\in[1,2]$, and
assume that $2\beta\le r$. Let $\fa$ and $\zeta$ satisfy
\eqref{deriv-a}, \eqref{lip-const}, and \eqref{zeta-holder}, and set
$Y(I)=L_x^{\frac q2}(B_1^d;L_t^{\frac r2}(I))$ or $Y(I)=L_t^{\frac r2}(I;L_x^{\frac q2}(B_1^d))$.
In the case $Y(I)=L^{\frac q2}(B_1^d;L_t^{\frac r2}(I))$, assume additionally that $2\beta\le q$.
Then the following estimates are equivalent, with constants independent of $k\ge1$:
\begin{enumerate}[$(i)$]
\item 
There exists a constant $C>0$ such that
\begin{align}\label{aaa}
\Big\|\sum_j\lambda_j|e^{it\Delta}P_kf_j|^2\Big\|_{Y(I)}
&\le C2^{2ks}\|\lambda\|_{\ell^\beta}
\end{align}
holds for all orthonormal initial data $(f_j)\subset L^2(\mathbb R^d)$ and all sequences $\lambda=(\lambda_j)\in \ell^\beta$.
\item 
There exists a constant $C>0$ such that
\begin{align}\label{bbb}
\Big\|\sum_j\lambda_j|S_{\fa}^\zeta P_kf_j|^2\Big\|_{Y(I)}
&\le C2^{2ks}\|\lambda\|_{\ell^\beta}
\end{align}
holds for all orthonormal initial data $(f_j)\subset L^2(\mathbb R^d)$ and all sequences $\lambda=(\lambda_j)\in \ell^\beta$. 
\end{enumerate}
\end{prop}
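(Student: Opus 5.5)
We follow the time localization scheme of \cite{CLV}, transferred to the dual Schatten formulation. By Lemma \ref{lem:dual}, \eqref{aaa} is equivalent to
\[
\|W e^{it\Delta}P_k (e^{it\Delta}P_k)^*\overline W\|_{\mathcal C^{\beta'}}\lesssim 2^{2ks}\|W\|^2_{\widetilde Y(I)},
\]
and similarly for $S^\zeta_\fa P_k$. The argument is symmetric in the two propagators, so we describe the direction (i)$\Rightarrow$(ii); the converse uses the same steps with the roles exchanged.

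\textbf{Step 1: superlocal comparison.} Partition $I=[0,1]$ into intervals $J_\nu=[t_\nu,t_\nu+c2^{-k}]$, with $c$ small depending on $A_0,D_0$ and $\|\fa\|_{C^2}$. On $J_\nu$, Taylor expand $\fa_j(t)=\fa_j(t_\nu)+\fa_j'(t_\nu)(t-t_\nu)+O(2^{-2k})$. The error contributes a phase $O(2^{2k}2^{-2k})=O(1)$ on $\supp\phi(2^{-k}\cdot)$, and we remove it by a Fourier series expansion in $\xi$ with rapidly decaying coefficients. Likewise $\zeta_t(x)=\zeta_{t_\nu}(x)+O(2^{-k})$ by \eqref{zeta-holder}, and the resulting factor $e^{iO(2^{-k})\cdot\xi}$ is handled the same way. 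Up to these harmless multipliers, we obtain
\[
S^\zeta_\fa P_kf(x,t)=e^{i\,\mathrm{lin}}\,e^{i\fa'(t_\nu)(t-t_\nu)\Delta} \big[e^{-i\fa(t_\nu)|D|^2}P_kf\big](\zeta_{t_\nu}(x)),
\]
where $\fa'(t_\nu)(t-t_\nu)\Delta$ denotes the anisotropic Laplacian $\sum_j\fa_j'(t_\nu)(t-t_\nu)\partial_j^2$. The operator $e^{-i\fa(t_\nu)|D|^2}$ is a unitary $Q$, so Lemma \ref{lem:TB-T1} absorbs it. The anisotropic dilation $\xi_j\mapsto \sqrt{\fa_j'(t_\nu)}\,\xi_j$ (comparable to $1$ by \eqref{deriv-a}) and the bi-Lipschitz change of variables $x\mapsto\zeta_{t_\nu}(x)$ (by \eqref{lip-const}) convert this into $e^{it\Delta}P_k$ on a time interval of length $\sim2^{-k}$ and on a ball of bounded radius. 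Both changes cost only constants in $Y$, the latter after covering the image ball by boundedly many unit balls and translating. The Fourier-series coefficients are summed via the triangle inequality in $\mathcal C^{\beta'}$. Hence (i), restricted to short intervals, gives
\[
\|W_\nu T_kT_k^*\overline{W_\nu}\|_{\mathcal C^{\beta'}}\lesssim 2^{2ks}\|W_\nu\|^2_{\widetilde Y},\qquad W_\nu=W\chi_{J_\nu},\quad T_k=S^\zeta_\fa P_k.
\]

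\textbf{Step 2: gluing the $\sim2^k$ intervals (the main obstacle).} We write $WT_kT_k^*\overline W=\sum_{\nu,\mu}W_\nu T_kT_k^*\overline{W_\mu}$. The diagonal and near-diagonal terms ($|\nu-\mu|\lesssim1$) are controlled by Step 1 together with a Schatten-space orthogonality argument. Since $(W_\nu T_k)$ have disjoint time supports, $\|\sum_\nu W_\nu T_k\|_{\mathcal C^{2\beta'}}^{2\beta'}$ is bounded through $\sum_\nu\|W_\nu T_k\|^{2\beta'}$ when $2\beta'\ge2$, i.e.\ $\beta\le2$. We then need $\big(\sum_\nu\|W_\nu\|_{\widetilde Y}^{2\beta'}\big)^{1/2\beta'}\le\|W\|_{\widetilde Y}$, which holds because the time exponent satisfies $\tilde r\le 2\beta'$, i.e.\ $2\beta\le r$; in the $L_x L_t$ case we also need the Minkowski exchange, which requires $\tilde q\le 2\beta'$, i.e.\ $2\beta\le q$. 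This is exactly where the hypotheses enter.

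For the far terms $|t-t'|\gg2^{-k}$, the kernel of $T_kT_k^*$ is an oscillatory integral with phase $(\zeta_t(x)-\zeta_{t'}(x'))\cdot\xi-(\fa(t)-\fa(t'))\xi^2$. By \eqref{deriv-a}, $|\fa(t)-\fa(t')|\gtrsim|t-t'|$, so stationary phase gives the dispersive bound $|t-t'|^{-d/2}$ together with rapid decay unless $|\zeta_t(x)-\zeta_{t'}(x')|\sim2^k|t-t'|$. Since $x,x'\in B_1$, this is impossible once $|t-t'|\gg 2^{-k}$ is large enough relative to $D_0$, by \eqref{zeta-holder}. The far part therefore has kernel $O(2^{-Nk})$ and is bounded in $\mathcal C^2\subset\mathcal C^{\beta'}$ trivially. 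For intermediate separations $2^{-k}\ll|t-t'|\lesssim D_0^{\,2}2^{-k}$, which involve only boundedly many blocks, we bound each pair by Cauchy--Schwarz in Schatten norms, $\|W_\nu T_k T_k^*\overline{W_\mu}\|\le\|W_\nu T_k\|\,\|W_\mu T_k\|$, and sum using the same orthogonality as above.

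The delicate point is to make Step 2 precise in $\mathcal C^{\beta'}$ rather than in operator norm. I expect this to follow by Schur-type summation of the blocks via the Schatten H\"older inequality $\|AB\|_{\mathcal C^{p}}\le\|A\|_{\mathcal C^{2p}}\|B\|_{\mathcal C^{2p}}$, with block counting at the $\ell^{2\beta'}$ level; the detailed execution is the time localization result deferred to Section \ref{sec:5}.
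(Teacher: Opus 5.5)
Your two-step architecture is the paper's. Proposition~\ref{prop:reduction} comes from the superlocal equivalence (Lemma~\ref{lem:ap} and Proposition~\ref{lem:local equiv}: Taylor expansion of $\fa$, freezing $\zeta$ at $t_0$, a Fourier series for the $O(1)$ error phase, unitary $Q_m$, and the change of variables $M_\fa\zeta_{t_0}$). It is combined with the globalization of Proposition~\ref{lem:glob3}, which uses the diagonal/near/far decomposition, the nonstationary phase bound of Lemma~\ref{lem:ker}, the Hilbert--Schmidt bound with $\mathcal C^2\subset\mathcal C^{\beta'}$ for the far part (Lemma~\ref{lem:Ifar}), and the $\ell^{2\beta'}$ summation \eqref{W-sum}. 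As you say, that summation is exactly where $2\beta\le r$ and $2\beta\le q$ enter. Your treatment of the Fourier coefficients is an equivalent dual version of the paper's Cauchy--Schwarz in $m$: you absorb $c_m(x,t)$ into the weight, $\|Wc_m\|_{\widetilde Y}\lesssim(1+|m|)^{-N}\|W\|_{\widetilde Y}$, and sum by the triangle inequality in $\mathcal C^{2\beta'}$.

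The orthogonality claim in Step~2 is false as written. Write $U_\nu=W_\nu T_k$. Disjoint time supports give $U_\nu^*U_\mu=0$ for $\nu\ne\mu$, but not $U_\nu U_\mu^*=0$. This one-sided orthogonality only yields
$\|\sum_\nu U_\nu\|_{\mathcal C^{2\beta'}}^2=\|\sum_\nu U_\nu^*U_\nu\|_{\mathcal C^{\beta'}}\le\sum_\nu\|U_\nu\|_{\mathcal C^{2\beta'}}^2$.
The $\ell^{2\beta'}$ bound you state fails for every $\beta'>1$. For a counterexample, take $T_kf=\langle f,u\rangle g$ of rank one with $\|u\|_2=1$; then $\|WT_k\|_{\mathcal C^{2\beta'}}=(\sum_\nu\|W_\nu g\|_2^2)^{1/2}$. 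Since $\sum_\nu U_\nu=WT_k$, your inequality would give the global bound directly from Step~1, making the near and far analysis unnecessary.

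What is true, and what the paper uses through Lemma~\ref{lem:car}, is two-sided orthogonality. The diagonal blocks $U_\nu U_\nu^*$, and for each fixed shift $m$ the blocks $B_\nu=U_\nu U_{\nu+m}^*$, satisfy $B_\nu B_\mu^*=B_\nu^*B_\mu=0$ for $\nu\ne\mu$, so their $\mathcal C^{\beta'}$ norms add in $\ell^{\beta'}$ for every $\beta'$. Combine this with $\|U_\nu U_{\nu+m}^*\|_{\mathcal C^{\beta'}}\le\|U_\nu\|_{\mathcal C^{2\beta'}}\|U_{\nu+m}\|_{\mathcal C^{2\beta'}}$ and Cauchy--Schwarz in $\nu$. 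This gives the bound $(\sum_\nu\|U_\nu\|_{\mathcal C^{2\beta'}}^{2\beta'})^{1/\beta'}$, but only for the diagonal and near-diagonal parts. Consequently $\beta\le 2$ plays no role in this step; it is needed only for $\mathcal C^2\subset\mathcal C^{\beta'}$ in the far part. With this correction your outline goes through as in the paper.
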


\noindent \textit{Proof of Theorem \ref{thm:equiv}.}
We first prove Theorem \ref{thm:equiv} assuming Proposition \ref{prop:reduction}.
Suppose first that \eqref{on-sch-2} holds for every Sobolev exponent larger than $s_0$. 
Fix $\tau>s_0$ and choose $s$ so that $s_0<s<\tau$.

Apply Lemma \ref{lem:TB-T1} to $T=e^{it\Delta} \langle D \rangle^{-s}$ with $Q=2^{-ks}\langle D \rangle^s P_k$.
Since $\|Q\|_{\mathcal B(L^2)}\lesssim 1$ and $e^{it\Delta}P_k=2^{ks}TQ$, we obtain \eqref{aaa}. Proposition \ref{prop:reduction} then gives \eqref{bbb} with the same exponent.

Let $(f_j)$ be an orthonormal system in $H^\tau(\R^d)$ and set $g_j=\langle D\rangle^\tau f_j$ so that $(g_j)$ are orthonormal in $L^2(\R^d)$. For $k\ge1$, choose $\widetilde P_k$ with symbol supported near $|\xi|\sim 2^k$ such that $P_k\widetilde P_k = P_k$. Set $Q_k=2^{\tau k}\langle D\rangle^{-\tau}\widetilde P_k$. Then $\|Q_k\|_{\mathcal B(L^2)}\lesssim 1$ uniformly in $k$, and $P_k \langle D\rangle^{-\tau}=2^{-\tau k}P_kQ_k$. Applying Lemma \ref{lem:TB-T1} to \eqref{bbb}  gives
\[
\Big\|\sum_j \lambda_j |S^\zeta_{\fa} P_{k}\langle D \rangle^{-\tau} g_j|^2\Big\|_{Y(I)}
\lesssim C
2^{2k(s-\tau)} \|\lambda\|_{\ell^\beta}.
\]
The low frequency term $P_0$ is handled directly by the Bessel's inequality, since $\phi_0(\xi)\langle \xi\rangle^{-\tau} \in L^2(\R^d)$ and $\|\lambda\|_{\ell^\infty} \le \|\lambda\|_{\ell^\beta}$.
By Minkowski's inequality for the sum over $k\ge0$, we have
\begin{align}\label{eq:Minkowski}
\Big\|\sum_j \lambda_j |S^\zeta_{\fa}f_j|^2\Big\|_{Y(I)}^{1/2}
&\le \sum_{k\ge0}\Big\|\sum_j |\lambda_j| |S^\zeta_{\fa}P_k f_j|^2\Big\|_{Y(I)}^{1/2} \\
&\lesssim \sum_{k\ge0}2^{k(s-\tau)}\|\lambda\|_{\ell^\beta}^{1/2}.
\end{align}
Since $s<\tau$, this yields \eqref{on-hermite-2} and hence $(i)\Rightarrow(ii)$.

The reverse implication follows by the same argument with  \eqref{aaa} and \eqref{bbb} interchanged. Thus, Proposition \ref{prop:reduction} implies Theorem \ref{thm:equiv}.
\qed

\subsection{Superlocal equivalence and globalization}
To prove Proposition \ref{prop:reduction}, and hence Theorem~\ref{thm:equiv}, we adapt the argument of \cite{CLV} for the single-function estimates to the orthonormal setting. 
Fixing a dyadic frequency scale $2^k$, we call estimates on $I=[0,1]$ local-in-time estimates
and those on subintervals $J\subset I$ with $|J|\sim 2^{-k}$ superlocal-in-time estimates.

The proof consists of two main steps. First, we establish an orthonormal analogue
of the single-function equivalence in \cite{LR,CLV} on each superlocal time
interval (see Proposition~\ref{lem:local equiv}). On each such interval, we linearize $\fa$ in time and freeze $\zeta_t$ at the center of the interval. A Fourier series expansion of the error factor then gives, after a change of variables, an expression in terms of the Schr\"odinger operator.

We then deduce the local-in-time estimate on $I$ from the superlocal estimates for $S_{\fa}^\zeta$ (see Proposition~\ref{lem:glob3}). In the dual Schatten formulation, we
decompose the associated $TT^*$ operator into diagonal, near-diagonal, and
far-off-diagonal terms in time. The diagonal and near-diagonal terms are
bounded using the superlocal estimate, orthogonality, and the finite overlap.
For the far-off-diagonal term, the kernel estimate in
Lemma \ref{lem:ker} gives a Hilbert--Schmidt bound.
The inclusion $\mathcal C^2\subset\mathcal C^{\beta'}$
for $\beta'\ge2$ then yields the required estimate
for $1\le\beta\le2$.

We first state the equivalence on superlocal time intervals.

\begin{prop}\label{lem:local equiv}
Let $s\in\R$, $\beta\in[1,2]$, and $q,r\in [2,\infty]$.
For $k\ge1$, let $J_0\subset I$ be an interval of length comparable to $2^{-k}$. Assume that $\fa$ and $\zeta$ satisfy \eqref{deriv-a}, \eqref{lip-const}, and \eqref{zeta-holder}. Set
$
Y(J_0)=L_x^{\frac q2}(B_1^d;L_t^{\frac r2}(J_0))$, or
 $Y(J_0)=L_t^{\frac r2}(J_0;L_x^{\frac q2}( B_1^d))$.
Then the following are equivalent.

\begin{itemize}
    \item [(i)] There exists a constant $C>0$ such that
    \begin{align}\label{sch-local}
\Big\| \sum_j \lambda_j |e^{it\Delta} P_{k}f_j|^2 \Big\|_{Y(J_0)} \le C 2^{2ks}\|\lambda\|_{\ell^\beta}
\end{align}
holds for all orthonormal system $(f_j) \subset L^2(\R^d)$ and all sequences $(\lambda_j)\in \ell^\beta$.
\item[(ii)] There exists a constant $C>0$ such that
\begin{align}\label{aux-local}
\Big\| \sum_j \lambda_j |S^\zeta_{\fa} P_{k} f_j|^2 \Big\|_{Y(J_0)} \le C2^{2ks}\|\lambda\|_{\ell^\beta}
\end{align}
holds for all orthonormal system $(f_j) \subset L^2(\R^d)$ and all sequences $(\lambda_j)\in \ell^\beta$.
\end{itemize}
\end{prop}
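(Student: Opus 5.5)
We prove $(ii)\Rightarrow(i)$; the converse is symmetric, with the roles of $e^{it\Delta}$ and $S^\zeta_{\fa}$ exchanged. Write $J_0=[t_0,t_0+c2^{-k}]$.

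\emph{Linearization.} On $J_0$, Taylor expansion gives
\[
\fa_j(t)=\fa_j(t_0)+\fa_j'(t_0)(t-t_0)+O(2^{-2k}).
\]
Since $|\xi|\sim 2^k$ on $\supp\phi(2^{-k}\cdot)$, the error in the phase is $\sum_j O(2^{-2k})\xi_j^2=O(1)$. Likewise \eqref{zeta-holder} gives $\zeta_t(x)=\zeta_{t_0}(x)+O(2^{-k})$, so $(\zeta_t(x)-\zeta_{t_0}(x))\cdot\xi=O(1)$.

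\emph{Fourier series of the error.} We write
\[
S^\zeta_{\fa}P_kf(x,t)=(2\pi)^{-d}\int e^{i\zeta_{t_0}(x)\cdot\xi-i\sum_j(\fa_j(t_0)+\fa_j'(t_0)(t-t_0))\xi_j^2}\,E(x,t,\xi)\,\phi(2^{-k}\xi)\widehat f(\xi)\,d\xi,
\]
where
\[
E(x,t,\xi)=e^{i(\zeta_t(x)-\zeta_{t_0}(x))\cdot\xi-i\sum_j(\fa_j(t)-\fa_j(t_0)-\fa_j'(t_0)(t-t_0))\xi_j^2}.
\]
After rescaling $\eta=2^{-k}\xi$, the map $\eta\mapsto E(x,t,2^k\eta)$ is smooth on a fixed ball with $C^N$ bounds uniform in $(x,t)\in B_1^d\times J_0$ and in $k$. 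The phase is only $O(1)$, but its derivatives in $\eta$ are uniformly bounded because $|\partial_\eta^\alpha(2^{-2k}\cdot 2^{2k}\eta^2)|\lesssim1$. Multiplying by a cutoff $\tilde\phi(\eta)$ and expanding in a Fourier series on a cube gives
\[
E\,\tilde\phi(2^{-k}\xi)=\sum_{m\in\Z^d}c_m(x,t)\,e^{i m\cdot2^{-k}\xi},\qquad \sup_{x,t}|c_m(x,t)|\lesssim_N(1+|m|)^{-N}.
\]
Hence
\[
S^\zeta_{\fa}P_kf(x,t)=\sum_m c_m(x,t)\,U_mP_kf(x,t),
\]
where $U_m$ is the linearized propagator applied to $f$ translated by $2^{-k}m$. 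Cauchy--Schwarz in $m$, with weight $(1+|m|)^{-N}$, gives
\[
\sum_j\lambda_j|S^\zeta_{\fa}P_kf_j|^2\lesssim\sum_m(1+|m|)^{-N}\sum_j|\lambda_j|\,|U_mP_kf_j|^2.
\]
For complex $\lambda$ we first split it into real and imaginary, positive and negative parts. Translation is unitary, so $(\tau_{2^{-k}m}f_j)$ is still orthonormal, and Lemma \ref{lem:TB-T1} lets us absorb it. Since $Y(J_0)$ is a normed lattice, summing over $m$ reduces matters to the estimate for the linearized operator
\[
\widetilde Sf(x,t)=(2\pi)^{-d}\int e^{i\zeta_{t_0}(x)\cdot\xi-i\sum_j(\fa_j(t_0)+\fa_j'(t_0)(t-t_0))\xi_j^2}\,\widehat f(\xi)\,d\xi .
\]
The same expansion run in reverse gives the implication from $\widetilde S$ back to $S^\zeta_{\fa}$, so $S^\zeta_{\fa}$ and $\widetilde S$ satisfy equivalent estimates.

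\emph{Change of variables.} For fixed $t_0$, the operator $\widetilde S$ factors as follows. The factor $e^{-i\sum_j\fa_j(t_0)\xi_j^2}$ is a unitary Fourier multiplier, and it commutes with $P_k$; it is absorbed by Lemma \ref{lem:TB-T1}. Next, the anisotropic dilation $\xi_j\mapsto\fa_j'(t_0)^{-1/2}\xi_j$ turns $\fa_j'(t_0)(t-t_0)\xi_j^2$ into $(t-t_0)\xi_j^2$. It is an $L^2$-normalized unitary up to a constant factor, with $\fa_j'(t_0)\in[A_0,\|\fa\|_{C^1}]$. It moves frequencies only within $|\xi|\sim2^k$, so $P_k$ is replaced by an operator $\widetilde P_k$ of the same type, and this is handled by the usual overlapping of finitely many neighbouring projections. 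Finally, the translation $t\mapsto t-t_0$ in time and the spatial map $x\mapsto\zeta_{t_0}(x)$, combined with the rescaling above, act on the norm. The latter is bi-Lipschitz by \eqref{lip-const} and independent of $t$, so it changes the $Y(J_0)$ norm by at most a constant, up to replacing $B_1^d$ by a ball $B^d_{C}$. What remains is exactly $e^{it\Delta}\widetilde P_k f$ evaluated at $(\Lambda\zeta_{t_0}(x),t-t_0)$, where $\Lambda$ is the diagonal dilation.

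\emph{Main obstacle.} The step I expect to be hardest is reconciling the estimate on $B_C^d\times(J_0-t_0)$ with hypothesis \eqref{sch-local}, which is stated on $B_1^d\times J_0$ for intervals $J_0\subset I$. I would cover $B_C^d$ by finitely many unit balls and use translation invariance in $x$ of $e^{it\Delta}$, absorbed via Lemma \ref{lem:TB-T1} since the translations are unitary. For time, I would use the time translation $e^{i t_1\Delta}$, which is unitary and commutes with $P_k$. In the direction where the hypothesis is on $S^\zeta_{\fa}$, the same issue arises, and intervals near $t=0$ may need to be shifted inside $I$. Taking the $L^{q/2}$ or $L^{r/2}$ quasi-sum over these finitely many pieces is harmless. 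The constants depend only on $A_0$, $D_0$, $\|\fa\|_{C^2}$ and $N$, and not on $k$, which completes the plan.
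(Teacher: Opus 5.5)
Your proposal is correct and follows essentially the same route as the paper (Lemma \ref{lem:ap} and the proof of Proposition \ref{lem:local equiv}). You linearize $\fa$ and freeze $\zeta$ at $t_0$, Fourier-expand the $O(1)$ error factor with rapidly decaying coefficients, and apply Cauchy--Schwarz in $m$. You then absorb the translations and unitary multipliers via Lemma \ref{lem:TB-T1}, rescale by $M_\fa$ using neighbouring projections $P_\ell$ with $|\ell-k|\le L$, and cover the image ball by finitely many translates.

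Two minor remarks. First, the bound you actually write out, controlling $\sum_j\lambda_j|S^\zeta_{\fa}P_kf_j|^2$ by the linearized operator, is the direction $(i)\Rightarrow(ii)$ rather than $(ii)\Rightarrow(i)$. Second, your \emph{main obstacle} disappears if, as the paper does, you absorb the $t$-independent factor $e^{-i\sum_l(\fa_l(t_0)-t_0\fa_l'(t_0))\xi_l^2}$ into the isometry $Q_m$. Then the free propagator appears at the same times $t\in J_0$, and no shifting of intervals is ever needed.
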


We next turn to the second step, in which we recover the local-in-time estimate on $I$ from the superlocal-in-time estimates. This is provided by the following proposition.

\begin{prop}\label{lem:glob3}
Let $k\ge0$, $s\in \R$, $q,r\in [2,\infty]$ and $\beta\in [1,2]$, and assume that $2\beta\le r$.
Suppose that $\fa$ and $\zeta$ satisfy \eqref{deriv-a}, \eqref{lip-const}, and \eqref{zeta-holder}.
For an interval $E\subset I$, set $Y(E)$ to be either
$Y(E)=L_x^{\frac q2}(B_1^d;L_t^{\frac r2}(E))$
or $Y(E)=L_t^{\frac r2}(E;L_x^{\frac q2}(B_1^d))$.
In the case $Y(E)=L_x^{q/2}(B_1^d;L_t^{r/2}(E))$, assume additionally that $2\beta\le q$.
Suppose that there exists a constant $C>0$, independent of $k$ and $J$, such that for every interval $J\subset I$ with $|J|\sim 2^{-k}$,
    \begin{equation}\label{e:lolo1}
        \Big\|\sum_j \lambda_j |S^\zeta_{\fa}P_{k} f_j|^2\Big\|_{Y(J)}\le C 2^{2ks} \|\lambda\|_{\ell^\beta} 
    \end{equation}
    for all orthonormal basis $(f_j)_j\subset L^2$ and $(\lambda_j)_j \in \ell^\beta$. Then there exists a constant $C'>0$ such that
    \begin{equation}\label{e:glgl1}
        \Big\|\sum_j \lambda_j |S^\zeta_{\fa}P_{k} f_j|^2\Big\|_{Y(I)}
        \lesssim C'
        2^{2ks} \|\lambda\|_{\ell^\beta} 
    \end{equation}
holds for all orthonormal basis $(f_j)_j \subset L^2$ and all $\lambda = (\lambda_j)_j\in\ell^\beta$. Here, $C'$ is independent of $k$.
\end{prop}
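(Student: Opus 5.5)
The plan is to work in the dual Schatten formulation of Lemma \ref{lem:dual}. Set $T=S^\zeta_{\fa}P_k$, viewed as an operator from $L^2(\R^d)$ into functions on $B_1^d\times I$. By duality, \eqref{e:glgl1} is equivalent to
\[
\|WTT^*\overline W\|_{\mathcal C^{\beta'}}\lesssim 2^{2ks}\|W\|_{\widetilde Y(I)}^2,
\]
where $\widetilde Y$ is the dual mixed-norm space with exponents $\tilde q,\tilde r$ from \eqref{tilde-qr}. The hypothesis \eqref{e:lolo1} gives the same bound with $W$ supported in $B_1^d\times J$ for any $|J|\sim2^{-k}$, uniformly in $J$. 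I will partition $I$ into intervals $J_\nu$ of length $2^{-k}$ and write $W=\sum_\nu W_\nu$ with $W_\nu=W\chi_{J_\nu}(t)$. This gives
\[
WTT^*\overline W=\sum_{\nu,\mu}W_\nu TT^*\overline{W_\mu}.
\]
I split the double sum into the diagonal and near-diagonal part $|\nu-\mu|\le N$, for a fixed large constant $N$, and the far part $|\nu-\mu|>N$.

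\textbf{Diagonal and near-diagonal terms.} For fixed $m=\nu-\mu$ with $|m|\le N$, each block factors as $(W_\nu T)(W_\mu T)^*$. The union of $J_\nu$ and $J_\mu$ is still an interval of length $\sim2^{-k}$, so the local bound gives $\|W_\nu T\|_{\mathcal C^{2\beta'}}\lesssim 2^{ks}\|W_\nu\|_{\widetilde Y}$.

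To sum over $\nu$, I use the block structure of the operator. The family $(W_\nu T T^*\overline{W_{\nu+m}})_\nu$ acts between the mutually orthogonal subspaces $L^2(B_1\times J_\nu)$ and $L^2(B_1\times J_{\nu+m})$. For such block-orthogonal families, the $\mathcal C^{p}$ norm of the sum is the $\ell^p$ sum of the block norms, $\big(\sum_\nu\|\cdot\|_{\mathcal C^{p}}^{p}\big)^{1/p}$ with $p=\beta'$. Hölder then gives a bound
\[
\lesssim 2^{2ks}\Big(\sum_\nu\|W_\nu\|_{\widetilde Y}^{2\beta'}\Big)^{1/\beta'}.
\]

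It remains to bound $\big(\sum_\nu\|W_\nu\|_{\widetilde Y}^{2\beta'}\big)^{1/(2\beta')}$ by $\|W\|_{\widetilde Y(I)}$, and this is where the exponent hypotheses enter. In the case $L_t^{\tilde r}L_x^{\tilde q}$ this is immediate when $\tilde r\le 2\beta'$, which is equivalent to $2\beta\le r$. In the case $L_x^{\tilde q}L_t^{\tilde r}$, I apply Minkowski's inequality to move the $\ell^{2\beta'}_\nu$ inside the $L_x^{\tilde q}$ norm, which needs $\tilde q\le2\beta'$, i.e. $2\beta\le q$. After that, the same $\ell^{2\beta'}\supset\ell^{\tilde r}$ embedding in $t$ applies.

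\textbf{Far-off-diagonal terms.} For $|t-t'|\gtrsim N2^{-k}$, I would bound the kernel of $TT^*$,
\[
K(x,t,y,t')=\int e^{i(\zeta_t(x)-\zeta_{t'}(y))\cdot\xi-i\sum_j(\fa_j(t)-\fa_j(t'))\xi_j^2}\phi(2^{-k}\xi)^2\,d\xi .
\]
This is the content of Lemma \ref{lem:ker}, which I will prove. By \eqref{deriv-a}, $|\fa_j(t)-\fa_j(t')|\ge A_0|t-t'|$. Stationary phase in each coordinate should then give
\[
|K|\lesssim |t-t'|^{-d/2}\big(1+2^{k}|t-t'|\big)^{-M}2^{kd}\ \text{(or similar decay)}
\]
away from the stationary set, using the Lipschitz bounds \eqref{lip-const} and \eqref{zeta-holder} on $|x|,|y|\le1$. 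On the stationary set, where $|\zeta_t(x)-\zeta_{t'}(y)|\sim 2^k|t-t'|$, I only get the dispersive bound $|t-t'|^{-d/2}$. Since $x,y$ range over a bounded set and $2^k|t-t'|\gg1$ for $N$ large, the stationary point lies outside $|\xi|\sim2^k$ unless the displacement is of size $2^k|t-t'|$. This cannot happen because $|\zeta_t(x)-\zeta_{t'}(y)|\le 2D_0+D_0$. Hence for $N$ large, depending on $A_0$ and $D_0$, integration by parts gives rapid decay $\lesssim 2^{kd}(2^k|t-t'|)^{-M}$.

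The far part $\sum_{|\nu-\mu|>N}W_\nu TT^*\overline{W_\mu}$ is then Hilbert--Schmidt. By Hölder in the $(x,y)$ and $(t,t')$ variables (Young's inequality in time for the convolution-type decay), its $\mathcal C^2$ norm is $\lesssim \|W\|_{L^2_{x,t}}^2$ times a kernel constant that is acceptable since $s$ only needs a bound $\lesssim2^{2ks}$ for the relevant range. More carefully, I expect the constant to be $O(1)\lesssim 2^{2ks}$ if $s\ge0$; otherwise I will exploit the rapid decay for extra factors. Finally, $\|W\|_{L^2}\lesssim\|W\|_{\widetilde Y}$ because $\tilde q,\tilde r\ge2$ on a bounded domain, and $\mathcal C^2\subset\mathcal C^{\beta'}$ since $\beta'\ge2$.

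The main obstacle is the kernel estimate in the far region, uniform in $k$: making the non-stationary-phase argument rigorous with only $C^2$ regularity of $\fa$ and Lipschitz regularity of $\zeta$. I would integrate by parts only in $\xi$, which is legitimate because the phase is smooth in $\xi$, and use the lower bound $|\nabla_\xi\Phi|\gtrsim 2^k|t-t'|$.
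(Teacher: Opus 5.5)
Your diagonal and near-diagonal treatment is the paper's argument. It uses block orthogonality for each fixed shift via Lemma~\ref{lem:car}, Schatten H\"older, and the $\ell^{2\beta'}$ summation with $2\beta\le r$, plus $2\beta\le q$ after Minkowski in the $L_x^{\tilde q}L_t^{\tilde r}$ case. You only need the superlocal bound on each $J_\nu$ separately, not on $J_\nu\cup J_\mu$.

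\textbf{The gap is in the far-off-diagonal term.} The kernel decay you state, $|K|\lesssim 2^{kd}(2^k|t-t'|)^{-M}$, is too weak. At the inner edge of the far region, $|t-t'|\approx N2^{-k}$, it gives only $|K|\lesssim 2^{kd}N^{-M}$. The Hilbert--Schmidt bound is then $\|I_{\mathrm{far}}\|_{\mathcal C^2}\lesssim 2^{kd}\|W\|_{L^2}^2$, which is not $\lesssim 2^{2ks}\|W\|_{\widetilde Y}^2$ unless $s\ge d/2$. Using the $t$-decay through Young's inequality still leaves a positive power of $2^k$. So your claim that the constant is $O(1)$ does not follow, and neither does the plan to ``exploit the rapid decay'' when $s<0$. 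Since $s\in\R$ is arbitrary in the statement, you need the far kernel to be $O(2^{-kM})$ for every $M$. Negative $s$ is not vacuous here: for $q=r=2$ and $\beta=1$, the trivial bound on an interval of length $2^{-k}$ already gives $s=-\frac12$.

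A correct count in the integration by parts gives exactly this. Each integration by parts gains $(2^{2k}|t-t'|)^{-1}$, not $(2^k|t-t'|)^{-1}$. Derivatives of the amplitude $\phi^2(2^{-k}\xi)$ cost $2^{-k}$ and the Hessian of the phase is $O(|t-t'|)$, against $|\nabla_\xi\Phi|\gtrsim 2^k|t-t'|$. Equivalently, after $\xi\to 2^k\xi$ the phase is $2^{2k}|t-t'|\Psi_k$, with $|\nabla_\xi\Psi_k|\ge A_0/2$ and bounded derivatives on the unit annulus, once $|t-t'|\ge C_0 2^{-k}$ with $C_0$ large in terms of $A_0,D_0$. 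Hence $|K|\lesssim_M 2^{kd}(2^{2k}|t-t'|)^{-M}\le 2^{kd}(C_0 2^k)^{-M}$ uniformly on the far region; this is Lemma~\ref{lem:ker}. It follows that $\|I_{\mathrm{far}}\|_{\mathcal C^{\beta'}}\le \|I_{\mathrm{far}}\|_{\mathcal C^{2}}\lesssim_M 2^{k(d-M)}\|W\|_{L^2}^2\lesssim 2^{k(d-M)}\|W\|_{\widetilde Y}^2$. Taking $M$ large depending on $s$, this is absorbed by $2^{2ks}$. No stationary-phase analysis is needed, since there is no critical point on the support in this region. With this correction, your argument is exactly the paper's route through Proposition~\ref{lem:glob2}.
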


We now deduce Proposition \ref{prop:reduction} from Propositions \ref{lem:local equiv} and \ref{lem:glob3}.

\noindent\textit{Proof of Proposition \ref{prop:reduction}.}
Assume first that \eqref{aaa} holds. Restricting the estimate to an interval $J\subset I$ of length $|J|\sim2^{-k}$ gives \eqref{sch-local}. Proposition~\ref{lem:local equiv} therefore yields
\[
\Big\|\sum_j\lambda_j|S_{\fa}^\zeta P_kf_j|^2\Big\|_{Y(J)}
\lesssim2^{2ks}\|\lambda\|_{\ell^\beta}
\]
which is \eqref{e:lolo1}, uniformly in $k$ and $J$. Proposition \ref{lem:glob3} then gives \eqref{bbb} on $I$.

Conversely, suppose that \eqref{bbb} holds. After restriction to
each interval $J$ of length comparable to $2^{-k}$, the reverse implication in Proposition \ref{lem:local equiv} gives the corresponding free Schr\"odinger
estimate on $J$. We then apply Proposition \ref{lem:glob3} with
$\zeta(x,t)=x$ and $\fa(t)=(t,\ldots,t)$, so that
$S_{\fa}^\zeta=e^{it\Delta}$. This yields \eqref{aaa} on $I$ and completes the proof. \qed\\

In the remainder of this section, we prove Proposition~\ref{lem:local equiv}. The proof of Proposition~\ref{lem:glob3} is given in Section \ref{sec:5}.

\subsection{Equivalence in superlocal temporal estimates}

Following the single function argument in \cite[Lemma 4.2]{CLV}, we first establish a representation for each function $f$ that preserves orthonormality.

\begin{lem}\label{lem:ap}
Let $k\ge1$, and let $J_0\subset I$ be an interval centered at $t_0$ with $|J_0|\sim 2^{-k}$. 
Assume that $\fa$ and $\zeta$ satisfy \eqref{deriv-a}, \eqref{lip-const}, and \eqref{zeta-holder}.
Then there exist a function $\varphi \in C_0^\infty(\R^d)$ supported in a fixed annulus, and for each $m\in \Z^d$, a linear isometry $Q_m:L^2(\R^d)\to L^2(\R^d)$ and a coefficient $c_m(x,t)$, such that, for every $f\in L^2(\R^d)$, $x\in B_1^d$ and $t\in J_0$,
\begin{align}\label{approx}
(S_{\fa}^\zeta  P_{k}f)(x,t)
= \sum_{m\in \mathbb Z^d} c_m(x,t) \big[e^{it\Delta} P_{\varphi,k} (Q_m f)\big]
\big( {M}_\fa \zeta_{t_0}(x)\big).
\end{align}
Here, $P_{\varphi,k}$ is defined by $\mathcal F(P_{\varphi,k}g)=\varphi(2^{-k}\cdot)\widehat g$, and $M_\fa=\operatorname{diag}((\fa_l'(t_0))^{-1/2})_{l=1}^d$. Moreover, for any $N\ge1$, $|c_m(x,t)| \lesssim_N (1+|m|)^{-N}$ for all $m\in \Z^d$ with an implicit constant independent of $k$, $t\in J_0$, $x\in B_1^d$.
\end{lem}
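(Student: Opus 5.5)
We write $S_\fa^\zeta P_kf$ in frequency form and expand the phase around $t_0$. Explicitly,
\[
S_\fa^\zeta P_kf(x,t)=(2\pi)^{-d}\int e^{i\zeta_t(x)\cdot\xi-i\sum_l\fa_l(t)\xi_l^2}\phi(2^{-k}\xi)\widehat f(\xi)\,d\xi .
\]
By Taylor's theorem, $\fa_l(t)=\fa_l(t_0)+\fa_l'(t_0)(t-t_0)+R_l(t)$ with $|R_l(t)|\lesssim 2^{-2k}$ on $J_0$, since $\fa\in C^2$. We also write $\zeta_t(x)=\zeta_{t_0}(x)+(\zeta_t(x)-\zeta_{t_0}(x))$, where the difference is $O(2^{-k})$ by \eqref{zeta-holder}. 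Then
\[
\zeta_t(x)\cdot\xi-\sum_l\fa_l(t)\xi_l^2
=\zeta_{t_0}(x)\cdot\xi-\sum_l\fa_l(t_0)\xi_l^2-(t-t_0)\sum_l\fa_l'(t_0)\xi_l^2+E(x,t,\xi),
\]
where $E=(\zeta_t(x)-\zeta_{t_0}(x))\cdot\xi-\sum_lR_l(t)\xi_l^2$. On the support $|\xi|\sim2^k$, we have $|\partial_\xi^\alpha E|\lesssim 2^{-k|\alpha|}$ uniformly in $x\in B_1^d$ and $t\in J_0$. Hence, in the rescaled variable $\eta=2^{-k}\xi$, the function $e^{iE(x,t,2^k\eta)}$ is smooth with bounded derivatives on a fixed annulus.

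Next we change variables $\xi=M_\fa^{-1}\eta$, i.e. $\eta_l=\fa_l'(t_0)^{1/2}\xi_l$. This turns $(t-t_0)\sum_l\fa_l'(t_0)\xi_l^2$ into $(t-t_0)|\eta|^2$, and $\zeta_{t_0}(x)\cdot\xi$ into $M_\fa\zeta_{t_0}(x)\cdot\eta$. By \eqref{deriv-a} and $\fa\in C^2$, $M_\fa$ is bounded with bounded inverse uniformly in $t_0$. We then take a cutoff $\tilde\chi$ equal to $1$ on the support of $\phi(2^{-k}M_\fa^{-1}\cdot)$ rescaled, and supported in a slightly larger cube of side $\sim1$ (scaled by $2^k$). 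On that cube we expand $e^{iE}\phi(2^{-k}M_\fa^{-1}\eta)$ in a Fourier series in $2^{-k}\eta$:
\[
e^{iE(x,t,M_\fa^{-1}\eta)}\phi(2^{-k}M_\fa^{-1}\eta)
=\sum_{m\in\Z^d}c_m(x,t)\,e^{i c\,m\cdot 2^{-k}\eta}\,\varphi(2^{-k}\eta),
\]
where $\varphi$ is a smooth cutoff equal to $1$ on the annulus containing the rescaled supports, for all admissible $t_0$. Integrating by parts in the Fourier coefficients, using the uniform derivative bounds, gives $|c_m(x,t)|\lesssim_N(1+|m|)^{-N}$, independent of $k$, $x$ and $t$.

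Each term is then recognised as a free Schr\"odinger evolution. The remaining factors are the phase $e^{-i\sum_l\fa_l(t_0)\xi_l^2}$, the modulation $e^{icm\cdot2^{-k}\eta}$, the Jacobian $\det M_\fa^{-1}$, and the factor $e^{-it_0|\eta|^2}$ needed to turn $e^{-i(t-t_0)|\eta|^2}$ into $e^{-it|\eta|^2}$. We combine all of these with the dilation $\widehat f(M_\fa^{-1}\eta)$ into one operator $Q_m$, defined on the Fourier side by
\[
\widehat{Q_mf}(\eta)=(\det M_\fa)^{-1/2}\, e^{it_0|\eta|^2-i\sum_l\fa_l(t_0)(M_\fa^{-1}\eta)_l^2+icm\cdot2^{-k}\eta}\,\widehat f(M_\fa^{-1}\eta).
\]
We absorb the leftover constant $(\det M_\fa)^{-1/2}$ into $c_m$. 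Then $Q_m$ is a composition of a unimodular Fourier multiplier with an $L^2$-normalised dilation, hence a linear isometry, and it is independent of $x$ and $t$. With this $Q_m$, each summand is exactly $[e^{it\Delta}P_{\varphi,k}(Q_mf)](M_\fa\zeta_{t_0}(x))$, which gives \eqref{approx}. Since the Fourier series converges absolutely, the interchange of sum and integral is justified for Schwartz $f$, and the identity extends to all $f\in L^2$ by density.

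The main obstacle is arranging a single fixed annulus $\varphi$ and uniform coefficient bounds, so that the multipliers remaining in $Q_m$ are unimodular and the $x,t$-dependence sits entirely in $c_m$. This requires checking that all derivative bounds on $E$ hold uniformly in $k$ and $t_0$, which in turn uses the $C^2$ bound on $\fa$, the lower bound \eqref{deriv-a}, and the Lipschitz bound \eqref{zeta-holder}.
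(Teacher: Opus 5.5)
Your proof is correct and is essentially the paper's argument: Taylor-expand the phase about $t_0$, expand the smooth error factor $e^{iE}$ on the rescaled annulus in a Fourier series with rapidly decaying coefficients, and absorb the remaining unimodular multipliers, including $e^{it_0|\eta|^2}$, together with the $L^2$-normalized $M_\fa$-dilation into $Q_m$. The only cosmetic difference is that the paper expands $\widetilde\phi\, e^{i(2^kE_1-2^{2k}E_2)}$ and takes $\varphi=\phi\circ M_\fa$, whereas your fixed cutoff $\varphi$ does not depend on $t_0$.

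One slip to fix: the substitution that produces $(t-t_0)|\eta|^2$ and $M_\fa\zeta_{t_0}(x)\cdot\eta$ is $\xi=M_\fa\eta$, as your clause $\eta_l=\fa_l'(t_0)^{1/2}\xi_l$ says, not $\xi=M_\fa^{-1}\eta$. So $M_\fa^{-1}\eta$ should read $M_\fa\eta$ throughout, and both the normalization in $Q_m$ and the constant absorbed into $c_m$ become $(\det M_\fa)^{1/2}$, exactly as in the paper.
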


\begin{proof}
For simplicity, we set $S_k := S^\zeta_{\fa}P_{k}$.  Changing variables $\xi\rightarrow 2^k\xi$, we obtain
\begin{equation}\label{SP_k}
S_kf(x,t)
=(2\pi)^{-d}\,2^{dk} \int e^{i2^k\zeta_t(x)\cdot \xi-i2^{2k}
\sum_l \fa_l(t)\xi_l^2} \phi(\xi) \widehat f(2^k\xi)\,d\xi.
\end{equation}
By rewriting the functions $\zeta_t(x)$ and $\fa_l(t)$ in the phase as
\begin{align*}
\zeta_t(x)&=\zeta_{t_0}(x)+\big(\zeta_t(x)-\zeta_{t_0}(x)),\\
\fa_l(t)&=\fa_l(t_0) + \fa_l'(t_0)(t-t_0) + \big(\fa_l(t)-\fa_l(t_0)
-\fa_l'(t_0)(t-t_0)\big),
\end{align*}
the phase function in \eqref{SP_k} can be written as
\begin{equation*}
\begin{aligned}
&2^k\zeta_t(x)\cdot \xi - 2^{2k} \sum_{l=1}^d \fa_l(t)\xi_l^2 \\ 
= & 2^k\zeta_{t_0}(x)\cdot \xi
-2^{2k}  \sum_{l=1}^d \big(\fa_l(t_0)+ \fa_l'(t_0)(t-t_0)\big)\xi_l^2 + 2^kE_1-2^{2k} E_2,
\end{aligned}
\end{equation*}
where
\begin{align*}
E_1(x,t,\xi)&:=\big( \zeta_t(x)-\zeta_{t_0}(x) \big)\cdot \xi,\\
E_2(x,t,\xi)&:=\sum_{l=1}^d\big(\fa_l(t)-\fa_l(t_0)-\fa_l'(t_0)(t-t_0)\big) \xi_l^2.
\end{align*}

Choose a smooth function $\widetilde\phi\in C^\infty_0(B_3^d)$ such that $\widetilde\phi=1$ on $\supp\phi$.
On $[-\pi,\pi]^d$, the following function has the Fourier series expansion
\begin{align}\label{exp-error}
\widetilde \phi(\xi) e^{i(2^kE_1(x,t,\xi)-2^{2k} E_2(x,t,\xi))}
=\sum_{m\in\mathbb Z^d} C_m(x,t)e^{im\cdot \xi}
\end{align}
with $|C_m(x,t)|\lesssim_N (1+|m|)^{-N}$ for any $N\ge1$, uniformly in $k$, $J_0$, $x\in B_1^d$ and $t\in J_0$. Indeed, the Lipschitz condition \eqref{zeta-holder} and $|t-t_0| \lesssim 2^{-k}$ give 
$|\partial_\xi^a  E_1(x,t, \xi)|\lesssim 2^{-k}$ for all multi-indices $a\in\N^d_0$ and $\xi\in\supp \widetilde\phi$.
Similarly, Taylor's theorem and $\fa_l \in C^2(I)$ give $ |\partial_\xi^a E_2(x,t,\xi)|\lesssim 2^{-2k}$
for all $a \in \mathbb N_0^d$ on the support of $\widetilde \phi$. The Fourier coefficients in \eqref{exp-error} are $C_m(x,t)=(2\pi)^{-d}\int e^{i(-m\cdot \xi +2^kE_1-2^{2k}E_2)} \widetilde\phi(\xi)\,d\xi$. The desired decay estimates follow by repeated integration by parts.

We now insert \eqref{exp-error} into \eqref{SP_k} and obtain
\begin{align*}
S_kf(x,t)
&=\sum_{m\in\mathbb Z^d} C_m(x,t) \frac{2^{dk}}{(2\pi)^d} \int e^{i(2^k\zeta_{t_0}(x)\cdot \xi-2^{2k}t\sum_{l=1}^d 
\fa_l'(t_0)\xi_l^2)}\phi(\xi)\widehat {f_m}(2^k\xi)\,d\xi.
\end{align*}
Here, $f_m$ is given by
\begin{align*}
\widehat {f_m}(\xi)
= e^{i\Psi_m(2^{-k}\xi)} \widehat f(\xi),
\end{align*}
with $
\Psi_m(\xi)=m\cdot \xi - 2^{2k}\sum_{l=1}^d(\fa_l(t_0)-t_0 \fa_l'(t_0))\xi_l^2.
$
Let $M_\fa=\operatorname{diag}((\fa_l'(t_0))^{-1/2})_{l=1}^d$ and $\varphi:=\phi\circ M_\fa \in C^\infty_0(\R^d)$, whose support lies in a fixed annulus. Thus, changing variables $\xi \rightarrow 2^{-k} M_\fa \xi$, we have
\[
S_kf(x,t)
=\sum_{m\in \mathbb Z^d} C_m(x,t) (\det M_\fa)^{1/2}
e^{it\Delta} P_{\varphi,k} Q_m f(M_\fa\zeta_{t_0}(x))
\]
where 
$
\widehat {Q_m f}(\xi)= (\det M_\fa)^{1/2} \widehat{f_m}(M_\fa \xi).
$
Clearly, $\|Q_mf\|_2=\|f\|_2$.
Finally, setting $c_m= (\det M_\fa)^{1/2}C_m$ proves \eqref{approx} and the asserted coefficient bounds.
\end{proof}

Using Lemma \ref{lem:ap}, we now prove Proposition \ref{lem:local equiv}.

\begin{proof}[Proof of Proposition \ref{lem:local equiv}] 
It suffices to consider sufficiently large $k$, since the finitely many remaining terms follow directly from Bessel's inequality.
We first prove $(i)\Rightarrow(ii)$.
We treat the first mixed norm $Y(J_0)=L_x^{\frac q2}(B_1^d; L_t^{\frac r2}(J_0))$ only, because the argument for the other mixed norm is the same. By Lemma \ref{lem:ap}, we have \eqref{approx}
for some $\varphi\in C_0^\infty(\R^d)$. Note that $c_m$ and $\varphi$ are independent of  the choice of $f$. 
Since $|c_m|\le C_N(1+|m|)^{-N}$ for large $N$, the Cauchy--Schwarz inequality yields, for every $j$,
\[ 
|S_{\fa}^\zeta  P_{k}
 f_j(x,t)|^2
\lesssim \sum_{m\in \mathbb Z^d} (1+|m|)^{-N} \big|e^{it\Delta} P_{\varphi,k} (Q_m f_j)
\big( {M}_\fa \zeta_{t_0}(x)\big)\big|^2. 
\]
Since $\zeta_{t_0}$ is bi-Lipschitz on $B_1^d$, we have
$\{ M_\fa \zeta_{t_0}(x) :x\in B_1^d\}\subset B_C^d(x_0)$ for $x_0=M_\fa\zeta_{t_0}(0)\in \R^d$ and some constant $C>0$. Thus, by the Minkowski integral inequality and the change of variables $x\rightarrow (M_{\fa} \zeta_{t_0})^{-1}x$, we have
\begin{equation}\label{UtoE}
\begin{aligned}
&\Big\| \sum_j \lambda_j |S^\zeta_{\fa} P_{k} f_j|^2 \Big\|_{L_x^{\frac q2}(B_1^d; L_t^{\frac r2}(J_0))}\\
\lesssim 
&\sum_{m\in \mathbb Z^d} (1+|m|)^{-N}
\Big\| \sum_j |\lambda_j| |e^{it\Delta}  P_{\varphi,k} Q_m f_j|^2 \Big\|_{
L_x^{\frac q2}(B_C^d(x_0); L_t^{\frac r2}(J_0))}.
\end{aligned}
\end{equation}
For sufficiently large $k$, the support of $\varphi$ gives $P_{\varphi,k}=\sum_{|\ell-k|\le L}P_{\ell}P_{\varphi,k}$ with $\ell\ge1$ and $L$ independent of $k$.
Applying Lemma \ref{lem:TB-T1} to \eqref{sch-local} at each scale $\ell$, with $Q=P_{\varphi,k}Q_m \in \mathcal B(L^2(\R^d))$, and using translation invariance and finite covering of $B_C^d(x_0)$, we obtain
\[
\Big\|\sum_j|\lambda_j||e^{it\Delta}P_{\varphi,k}Q_mf_j|^2\Big\|_{L_x^{q/2}(B_C^d(x_0);L_t^{r/2}(J_0))}
\lesssim 2^{2ks}\|\lambda\|_{\ell^\beta}.
\]
Thus, summing over $m$ in \eqref{UtoE} yields \eqref{aux-local}, proving $(i)\Rightarrow(ii)$.

For the reverse implication ($(ii)\Rightarrow(i)$), set $\psi=\phi\circ M_\fa^{-1}$.
Repeating the Fourier-series argument above in \eqref{exp-error}, we obtain
\begin{align*}
e^{it\Delta}P_kf(y)
=
\sum_{m\in\Z^d}\widetilde c_m(y,t)
\big[S_{\fa}^{\zeta}P_{\psi,k}(\widetilde Q_mf)\big]
\big((M_{\fa}\zeta_{t_0})^{-1}y,t\big)
\end{align*}
for $y \in (M_{\fa} \zeta_{t_0})(B_1^d)$ and $t\in J_0$. 
Here, $\psi\in C_0^\infty(\R^d)$ is supported in a fixed annulus, and each $\widetilde Q_m$ is an $L^2$-isometry, and
$|\widetilde c_m(y,t)|\lesssim_N(1+|m|)^{-N}$.
By change of variables $y \rightarrow (M_\fa \zeta_{t_0})y$, $B_c^d(x_0)\subset M_\fa \zeta_{t_0}(B_1^d)$ for a constant $c>0$. By translation invariance of $e^{it\Delta}P_k$, we cover the ball $B_1^d$ by small balls $B_c^d(x_0)$. This proves $(ii)\Rightarrow(i)$.
\end{proof}

\section{From superlocal to local estimates}\label{sec:5}

In this section, we prove a globalization result in a slightly more general form than is needed for Theorem \ref{thm:equiv}, allowing both an arbitrary temporal scale $\nu^\theta$, with $\theta>1$, and an $\alpha$-dimensional measure in the spatial variable. The case used in the proof of Theorem \ref{thm:equiv} corresponds to $\theta=2$ and Lebesgue measure, for which $\alpha=d$.

Fix $\theta>1$ and $0<\alpha \le d$. A positive Borel measure $\mu$ on $\R^d$ is called $\alpha$-dimensional if there exists a constant $C_\mu>0$ such that
\[
\mu(B_\rho^d(x)) \le C_\mu \rho^\alpha, \quad \text{for all} \quad~ (x,\rho)\in \R^d\times (0,\infty).
\]
For $0<\nu\ll1$, let $T_\nu$ be an integral operator mapping $L^2(\R^d)$ into a space of measurable functions on $\R^d\times \R$. We assume that, for some $C_0>0$, the kernel $K_\nu$ of $T_\nu T_\nu^*$ satisfies 
\begin{equation}\label{e:TT* hypothesis2}
    |K_\nu(x,t,x',t')|\lesssim_N \,\psi^N_{\nu,\theta}(x-x',t-t'),
\end{equation}
for every integer $N\ge1$ and all  $(x,t),\,(x',t')\in B_1^d\times I$ such that $|t-t'|\ge C_0 \nu^{\theta-1}$, where
\begin{equation*}
\psi^N_{\nu,\theta}(x,t)=\nu^{-d}(1+\nu^{-1}|x|)^{-N}(1+\nu^{-\theta}|t|)^{-N},\quad (x,t)\in \R^d\times \R.
\end{equation*}

Proposition \ref{lem:glob2} shows that, for $1\le \beta \le2$ and under the mixed norm conditions below,
a superlocal estimate on intervals of length $\sim \nu^{\theta-1}$ can be globalized to $I$.

\begin{prop}\label{lem:glob2}
Let $q,r\in [2,\infty]$ and $1\le \beta \le 2$. For each interval $E \subset I$, let $Y(E)$ be either $Y(E)=L_x^{\frac q2}(B_1^d,d \mu;L_t^{\frac r2}(E,dt))$ or $Y(E)=L_t^{\frac r2}(E,dt;L_x^{\frac q2}(B_1^d,d\mu))$.
Let $\mu$ be an $\alpha$-dimensional measure in $\R^d$. Suppose
\begin{equation}\label{cond beta r}
\begin{aligned}
2\beta\le
\begin{cases}
\min\{q,r\},&Y(I)=L_x^{\frac q2}(B_1^d,d \mu;L_t^{\frac r2}(I,dt)),\\
r,&Y(I)=L_t^{\frac r2}(I,dt;L_x^{\frac q2}(B_1^d,d\mu)).
\end{cases}
\end{aligned}
\end{equation}
For $0<\nu\ll 1$, let $\mathcal J:=\{J\}$ be a finite partition of $I$ into disjoint intervals of equal length $\sim \nu^{\theta-1}$, and suppose that $T_\nu$ satisfies \eqref{e:TT* hypothesis2}.
Assume further that $C(\nu)>0$ decays at most polynomially as $\nu \to 0$, and that, for every $J\in \mathcal J$, 
    \begin{equation}\label{H1}
        \Big\|\sum_j \lambda_j |T_\nu f_j|^2\Big\|_{Y(J)}
        \le
        C(\nu) \|\lambda\|_{\ell^\beta} 
    \end{equation}
    for all orthonormal family $(f_j)_j\subset L^2$ and all sequences $\lambda = (\lambda_j)_j \in \ell^\beta$. Then
    \begin{equation}\label{H2}
        \Big\|\sum_j \lambda_j |T_\nu f_j|^2\Big\|_{Y(I)}
        \lesssim
        C(\nu) \|\lambda\|_{\ell^\beta} 
    \end{equation}
holds for all orthonormal family $(f_j)_j \subset L^2$ and all sequences $\lambda = (\lambda_j)_j\in \ell^\beta$.
\end{prop}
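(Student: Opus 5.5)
By Lemma \ref{lem:dual}, \eqref{H1} is equivalent to the dual Schatten bound
\[
\|W\chi_J T_\nu T_\nu^*\chi_J\overline W\|_{\mathcal C^{\beta'}}\le C(\nu)\|W\|_{\widetilde Y(J)}^2,
\]
and \eqref{H2} is equivalent to the same bound with $\chi_J$ replaced by $\chi_I$. Here $\widetilde Y$ is the dual mixed norm $L_x^{\tilde q}L_t^{\tilde r}$ or $L_t^{\tilde r}L_x^{\tilde q}$ with respect to $d\mu\,dt$. The plan is to fix $W\in\widetilde Y(I)$ and write $W_J=W\chi_J$. We then decompose
\[
WT_\nu T_\nu^*\overline W=\sum_{J,J'}W_JT_\nu T_\nu^*\overline{W_{J'}}
\]
according to the separation $\operatorname{dist}(J,J')$. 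Pairs with $\operatorname{dist}(J,J')\le C_0\nu^{\theta-1}$ form the diagonal and near-diagonal part $\mathcal D$. The remaining pairs form the far part $\mathcal F$, and on those pairs the kernel bound \eqref{e:TT* hypothesis2} is available.

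\textbf{Near-diagonal part.} Write $W_JT_\nu T_\nu^*\overline{W_{J'}}=(W_JT_\nu)(W_{J'}T_\nu)^*$. H\"older's inequality in Schatten spaces, together with \eqref{H1} in the form $\|W_JT_\nu\|_{\mathcal C^{2\beta'}}^2\le C(\nu)\|W_J\|_{\widetilde Y(J)}^2$, bounds each block by $C(\nu)\|W_J\|\,\|W_{J'}\|$. To sum, split $\mathcal D$ into boundedly many shifted diagonals $J'=J+\ell$ with $|\ell|\lesssim1$. For each fixed shift the operators have disjointly supported left and right factors in $t$. For block-diagonal operators with respect to orthogonal decompositions $\{\chi_J\}$ on both sides, the $\mathcal C^{\beta'}$ norm is the $\ell^{\beta'}$ sum of the block norms. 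This gives
\[
\Big(\sum_J\|W_J\|_{\widetilde Y(J)}^{\beta'}\|W_{J+\ell}\|_{\widetilde Y(J+\ell)}^{\beta'}\Big)^{1/\beta'}\le\Big(\sum_J\|W_J\|^{2\beta'}_{\widetilde Y(J)}\Big)^{1/\beta'}.
\]
We then need $(\sum_J\|W_J\|^{2\beta'})^{1/(2\beta')}\le\|W\|_{\widetilde Y(I)}$. For $Y=L_t^{r/2}L_x^{q/2}$ this follows from $\tilde r\le 2\beta'$, which is equivalent to $2\beta\le r$. For $Y=L_x^{q/2}L_t^{r/2}$ the $L_t^{\tilde r}$ norm sits inside, so we pass through Minkowski's inequality. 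This requires $\tilde q\le 2\beta'$ as well, i.e.\ $2\beta\le q$, and it is exactly where condition \eqref{cond beta r} enters.

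\textbf{Far part.} Here we do not use \eqref{H1}. Since $\beta'\ge2$, we have $\|\cdot\|_{\mathcal C^{\beta'}}\le\|\cdot\|_{\mathcal C^2}$, and we bound the Hilbert--Schmidt norm
\[
\Big(\iint|W(x,t)|^2|K_\nu(x,t,x',t')|^2|W(x',t')|^2\,d\mu\,dt\,d\mu\,dt'\Big)^{1/2}
\]
restricted to $|t-t'|\gtrsim\nu^{\theta-1}$. By \eqref{e:TT* hypothesis2}, $|K_\nu|^2\lesssim\nu^{-2d}(1+\nu^{-1}|x-x'|)^{-2N}(1+\nu^{-\theta}|t-t'|)^{-2N}$, and on this region the time factor is at most $\nu^{N}$ after losing a fixed power. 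The $\alpha$-dimensional condition on $\mu$ controls $\int(1+\nu^{-1}|x-x'|)^{-2N}d\mu(x')\lesssim\nu^{\alpha}$. Young's or Schur's inequality in $(x,t)$, with H\"older's inequality on the bounded set $B_1^d\times I$ (using $\tilde q,\tilde r\ge2$, so that $|W|^2\in L^{\tilde q/2}L^{\tilde r/2}\subset L^1$ locally), then gives
\[
\|\mathcal F\|_{\mathcal C^2}\lesssim_N\nu^{N-c}\|W\|_{\widetilde Y(I)}^2 .
\]
Because $C(\nu)$ decays at most polynomially, choosing $N$ large makes this at most $C(\nu)\|W\|^2$.

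Combining the two parts and applying Lemma \ref{lem:dual} again yields \eqref{H2}. We expect the main obstacle to be the near-diagonal summation: making precise the claim that Schatten norms of block operators $\sum_J A_JB_{J+\ell}^*$ with orthogonal time-localized factors add in $\ell^{\beta'}$, and then controlling $\ell^{2\beta'}_J\widetilde Y(J)$ by $\widetilde Y(I)$ in the $L_xL_t$ ordering. The far-part kernel estimate is routine by comparison.
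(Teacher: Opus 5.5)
Your proposal is correct and follows essentially the same route as the paper: duality, the block decomposition $\sum_{J,J'}(W\chi_J T_\nu)(W\chi_{J'}T_\nu)^*$, mutual orthogonality (Lemma \ref{lem:car}) together with Schatten H\"older and Cauchy--Schwarz along each of the boundedly many shifted diagonals, the $\ell^{2\beta'}_J$ summation using $2\beta'\ge\tilde q,\tilde r$, and a Hilbert--Schmidt bound for the far part. In the far part, the $\alpha$-dimensional integral and Schur/Young are superfluous, and an $L^2$-based Schur bound would need $|W|^2\in L^2$, which is unavailable when $\tilde r=2$. Since $|W|^2$ is only in $L^1$ there, the right step is the pointwise bound $|K_\nu|\lesssim_N\nu^N$ on $|t-t'|\ge C_0\nu^{\theta-1}$. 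This gives $\|\mathcal F\|_{\mathcal C^2}\lesssim\nu^N\|W\|_{L^2(d\mu\,dt)}^2$, exactly as in the paper.
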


\subsection{Kernel estimates}\label{sec:local S}

We now prove Proposition \ref{lem:glob3}. We first verify the kernel bound \eqref{e:TT* hypothesis2} for $S_{\fa}^\zeta P_k$. We have
\[
(S_{\fa}^\zeta P_k)(S_{\fa}^\zeta P_k)^*G(x,t) = (2\pi)^{-d}\iint_{B_1^d\times I}  \mathcal K_k(x,t,x',t')G(x',t')\,dx'dt',
\]
where
\[
\CK_k(x,t,x',t')=\int e^{i(\zeta_t(x)-\zeta_{t'}(x'))\cdot \xi - i\sum_{l=1}^d(\fa_l(t)-\fa_l(t'))\xi_l^2} \, \phi^2(2^{-k}\xi)\,d\xi.
\]
We shall use the following kernel estimates.
\begin{lem}\label{lem:ker}
Let $x,x'\in B_1^d$, $t,t'\in I$ and $k\ge1$, and let $A_0,D_0$ denote the constants in \eqref{deriv-a}, \eqref{lip-const}, \eqref{zeta-holder}. Then there exists a constant $C_0 =C_0 (A_0,D_0)>0$ such that,
whenever $|t-t'|\ge C_0 2^{-k}$, for any $N\ge1$
there is $C_N>0$ satisfying
\begin{align*}
|\CK_k(x,t,x',t')|\le C_N 2^{dk}(1+2^k|x-x'|)^{-N}
(1+2^{2k}|t-t'|)^{-N}.
\end{align*}
\end{lem}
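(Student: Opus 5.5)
Rescale $\xi\to2^k\xi$ to get
\[
\CK_k(x,t,x',t')=2^{dk}\int e^{i\Phi(\xi)}\phi^2(\xi)\,d\xi,\qquad
\Phi(\xi)=2^k(\zeta_t(x)-\zeta_{t'}(x'))\cdot\xi-2^{2k}\sum_{l}(\fa_l(t)-\fa_l(t'))\xi_l^2 .
\]
Write $X=\zeta_t(x)-\zeta_{t'}(x')$ and $a_l=\fa_l(t)-\fa_l(t')$. Assume without loss of generality that $t>t'$. By \eqref{deriv-a} and the mean value theorem, $a_l\ge A_0|t-t'|$; since $\fa\in C^2$ we also have $a_l\le A_1|t-t'|$ with $A_1=\max_l\|\fa_l'\|_\infty$. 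The plan is non-stationary phase on $\supp\phi\subset\{1/2\le|\xi|\le2\}$. The gradient is $\nabla\Phi(\xi)=2^kX-2^{2k+1}(a_l\xi_l)_l$. We split into two regimes according to the size of $|X|$ relative to $2^k|t-t'|$.

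First, suppose $|X|\ge C_1 2^k|t-t'|$ with $C_1=8A_1$. Then on the support $|2^{2k+1}(a_l\xi_l)|\le 4A_12^{2k}|t-t'|\le \tfrac12 2^k|X|$, so $|\nabla\Phi|\gtrsim 2^k|X|$. Second, suppose $|X|\le c_12^k|t-t'|$ with $c_1=A_0/8$. Then $|(a_l\xi_l)_l|\ge A_0|t-t'|/2$ gives $|\nabla\Phi|\gtrsim 2^{2k}|t-t'|$. In the intermediate regime $c_12^k|t-t'|<|X|<C_12^k|t-t'|$ the phase can be stationary, and here the hypothesis $|t-t'|\ge C_02^{-k}$ is used. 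I would show that this regime cannot occur for $x,x'\in B_1^d$ once $C_0$ is chosen large, by relating $X$ to $x-x'$ via \eqref{lip-const} and \eqref{zeta-holder}:
\[
|X|\le D_0|x-x'|+D_0|t-t'|\le 2D_0+D_0 .
\]
Thus $|X|=O(1)$, whereas $c_12^k|t-t'|\ge c_1C_0$. Choosing $C_0>3D_0/c_1$ means we are always in the second regime, so $|\nabla\Phi|\gtrsim2^{2k}|t-t'|$. Moreover, in this regime $|X|\lesssim 2^k|t-t'|$ gives $|\nabla\Phi|\gtrsim 2^{2k}|t-t'|+2^k|X|$.

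Next, integrate by parts $N$ times with the operator $L=\frac{\nabla\Phi\cdot\nabla}{i|\nabla\Phi|^2}$. The second derivatives of $\Phi$ are $O(2^{2k}|t-t'|)$ and all higher derivatives vanish. Each integration by parts therefore gains a factor $(2^{2k}|t-t'|+2^k|X|)^{-1}$. This yields
\[
|\CK_k|\lesssim_N 2^{dk}(1+2^{2k}|t-t'|+2^k|X|)^{-N}.
\]
Finally, convert $X$ to $x-x'$. By \eqref{lip-const} and \eqref{zeta-holder}, $|X|\ge D_0^{-1}|x-x'|-D_0|t-t'|$, so
\[
2^k|x-x'|\lesssim 2^k|X|+2^k|t-t'|\le 2^k|X|+2^{2k}|t-t'| .
\]
Hence $1+2^k|x-x'|\lesssim 1+2^{2k}|t-t'|+2^k|X|$. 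Splitting the exponent $2N$ as $N+N$ gives the stated product bound.

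The main obstacle I expect is exactly this regime bookkeeping. The time-variation of $\zeta$ enters $X$ only at scale $|t-t'|$, which is dominated by $2^k|t-t'|$ thanks to the extra factor $2^k$; this is why the threshold $C_02^{-k}$ suffices. Keeping $C_0$ depending only on $A_0$, $D_0$ and $\|\fa\|_{C^2}$ requires some care.
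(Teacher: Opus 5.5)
Your proof is correct and follows the paper's argument. After rescaling, the bound $|X|\le 3D_0$ together with $|t-t'|\ge C_02^{-k}$ (with $C_0\gtrsim D_0/A_0$) makes the quadratic part of the phase dominate, so $|\nabla\Phi|\gtrsim 2^{2k}|t-t'|$, and repeated integration by parts gives decay in $2^{2k}|t-t'|$, which then also controls $2^k|x-x'|$. Your first and intermediate regimes never occur, so the case split is unnecessary. For the last step the paper simply uses $2^k|x-x'|\le 2^{k+1}\le (2/C_0)2^{2k}|t-t'|$ instead of the lower Lipschitz bound, but your route works equally well.
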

The proof follows the nonstationary phase method in \cite[proof of Lemma 2.1]{CLV}.

\begin{proof}
Recall that $\phi$ is smooth and supported where $1/2\le|\xi|\le2$.
By changing variables $\xi \rightarrow 2^k \xi$, we write
\[\CK_k(x,t,x',t')=2^{dk}\int e^{i2^{2k}|t-t'|\Psi_k(x,t,x',t',\xi)} \phi^2(\xi)\,d\xi,
\]
where
\[
\Psi_k(x,t,x',t',\xi)=\frac{(\zeta_t(x)-\zeta_{t'}(x'))\cdot \xi}{2^k|t-t'|} - \frac{\sum_{l=1}^d(\fa_l(t)-\fa_l(t'))\xi_l^2}{|t-t'|}.
\]
We claim that
\begin{align}\label{e:Psik}
&    |\nabla_\xi \Psi_k|\ge A_0/2 \qquad
  |\partial_\xi^a \Psi_k|\le C_a,
\end{align}
holds for any multi-index $a$, uniformly on the support of $\phi$. Indeed, by \eqref{deriv-a} and the mean value theorem,
$|\fa_l(t)-\fa_l(t')| \ge A_0|t-t'|$, $1\le l \le d$.
Consequently for $\xi \in \supp \phi$,
\[
\big|\nabla_\xi \big(\sum_{l=1}^d(\fa_l(t)-\fa_l(t'))\xi_l^2\big)\big| \ge A_0|t-t'|.
\]
Thus, after division by $|t-t'|$, the gradient of the quadratic part of $\Psi_k$
is bounded below by $A_0$ on $\supp\phi$.

On the other hand, by \eqref{lip-const} and \eqref{zeta-holder}, we have
$|\zeta_t(x)-\zeta_{t'}(x')| 
\le D_0(|x-x'|+|t-t'|)\le 3D_0$, where we used $x,x'\in B_1^d$, $t,t'\in I$.
Hence if $|t-t'|\ge C_0 2^{-k}$ and $C_0$ is chosen sufficiently large in terms of $A_0$, $D_0$, then
\[
\frac{|\zeta_t(x)-\zeta_{t'}(x')|}
{2^k|t-t'|}
\le
\frac{A_0}{2}.
\]
This proves the lower bound in \eqref{e:Psik}. The derivative bounds in \eqref{e:Psik} are immediate, since $\Psi_k$ is a polynomial of degree $2$ in $\xi$.

Since $|\nabla_\xi \Psi_k|\gtrsim 1$ on the support of $\phi$, repeated integration by parts yields
\begin{align*}
|\mathcal K_k(x,t,x',t')| 
&\lesssim_N 2^{dk}(1+2^{2k}|t-t'|)^{-2N}
\end{align*}
for all $N\ge1$ provided $|t-t'|\ge C_0 2^{-k}$.
Finally, since $x,x'\in B_1^d$ and $|t-t'|\ge C_0 2^{-k}$, we have
$2^{-1}C_0 2^{k}|x-x'|\le 2^{2k}|t-t'|$. Therefore,
\begin{align*}
|\mathcal K_k(x,t,x',t')| 
&\lesssim_N 2^{dk}(1+2^k|x-x'|)^{-N}(1+2^{2k}|t-t'|)^{-N}.
\end{align*}
This completes the proof.
\end{proof}

We now deduce Proposition~\ref{lem:glob3} from Proposition~\ref{lem:glob2}.

\begin{proof}[Proof of Proposition \ref{lem:glob3}]
The finitely many $k$ for which $2^{-k}\sim1$ are handled by covering $I$ with finitely many intervals $J$ of length $|J|\sim2^{-k}$ and applying \eqref{e:lolo1} together with the triangle inequality. It therefore suffices to consider $0<2^{-k}\ll1$.

By Lemma \ref{lem:ker}, the kernel $\mathcal K_k$ of $(S^\zeta_{\fa}P_k)(S^\zeta_{\fa}P_k)^*$ satisfies
\[
|\mathcal K_k(x,t,x',t')|\lesssim_N\psi^N_{2^{-k},2}(x-x',t-t')
\]
for every $N\ge1$ whenever $|t-t'|\ge C_0 2^{-k}$.
Applying Proposition \ref{lem:glob2} with $\nu=2^{-k}$, $\theta=2$, and $C(\nu)=C\nu^{-2s}=C2^{2ks}$ gives \eqref{e:glgl1}.
\end{proof}

\subsection{Proof of Proposition \ref{lem:glob2}} We prove the result for $Y(I)=L_x^{\frac q2}(B_1^d,d\mu;L_t^{\frac r2}(I,dt))$. The other mixed norm $Y(I)=L_t^{r/2}(I,dt;L_x^{q/2}(B_1^d,d\mu))$ is treated in Section \ref{subsec:time norm}.
Set $T=T_\nu $ and let $\nu^{\theta-1}$ be the length of the intervals in $\mathcal J$. Let $\tilde q, \tilde r \in[2,\infty]$ be defined by the relations \eqref{tilde-qr}.
By Lemma \ref{lem:dual}, applied with spatial measure $d\mu$, the hypothesis \eqref{H1} and the desired estimate \eqref{H2} are equivalent to \textbf{(D-J)} and \textbf{(D-I)} below, respectively.
\begin{enumerate}
	\item \textbf{(D-J)} For every $J\in \mathcal J$ and every $W\in L_x^{\tilde q}(d\mu)L_t^{\tilde r}(dt)$ supported on $B^d_1\times J$,
\begin{align*} 
\big\| WTT^* \overline W \big\|_{\mathcal C^{\beta'}} \le C(\nu) \| W \|^2_{L_x^{\tilde q}(d\mu;L_t^{\tilde r}(dt))}.
\end{align*}
\item\textbf{(D-I)} For every $W\in L_x^{\tilde q}(d\mu)L_t^{\tilde r}(dt)$ supported on $B^d_1\times I$,
\begin{align*} 
\big\| WTT^*\overline W \big\|_{\mathcal C^{\beta'}} \lesssim C(\nu) \| W \|^2_{L_x^{\tilde q}(d\mu;L_t^{\tilde r}(dt))}.
\end{align*}
\end{enumerate}

For a function $W$ as in \textbf{(D-I)}, define
\[
U_J:=\chi_J WT.
\]
By H\"older's inequality and the single function case of \eqref{H1}, each $U_J:L^2(\R^d)\rightarrow L^2(B_1^d\times I;d\mu dt)$ is bounded since $\chi_J T:L^2(\R^d)\to L_x^{q}(B_1^d, d\mu; L_t^{r}(J,dt))$ and $W\in L_x^{\tilde q}(B_1^d, d\mu; L_t^{\tilde r}(I,dt	))$.
Since $U_J(U_{J'})^*=\chi_J WTT^* \overline W \chi_{J'}$, we have
\begin{equation}\label{eq_decom}
   WTT^* \overline W = \sum_{J,J'\in \mathcal J} U_JU_{J'}^*. 
\end{equation}
We decompose $WTT^* \overline W = I_{\mathrm{diag}} + I_{\mathrm{near}} + I_{\mathrm{far}}$, where
\begin{align*}
I_{\mathrm{diag}} = \sum_{J\in \mathcal J} U_JU_J^*,
\quad
I_{\mathrm{near}} = \sum_{\substack{J,J'\in \mathcal J, \, J\neq J',\\\dist(J,J')\le C_0 \nu^{\theta-1}}} U_JU_{J'}^*,
\quad
I_{\mathrm{far}} = \sum_{\dist(J,J')>C_0 \nu^{\theta-1}} U_JU_{J'}^*.
\end{align*}

It remains to prove, for each $\mathcal I\in \{I_{\mathrm{diag}},I_{\mathrm{near}},I_{\mathrm{far}}\}$, that
\begin{equation}\label{eq_goal}
    \big\| \mathcal I\big\|_{\mathcal C^{\beta'}} \lesssim C(\nu) \|W\|^2_{L_x^{\tilde q}(d\mu;L_t^{\tilde r}(dt))}.
\end{equation}
Summing these three bounds and using the triangle inequality in $\mathcal C^{\beta'}$ gives \textbf{(D-I)}, hence \eqref{H2}. All implicit constants are independent of $\nu$ and $W$. 

\subsection{Estimate for $I_{\mathrm{diag}}$}
To control $I_{\mathrm{diag}}$, we use the following property of mutually orthogonal operators.
\begin{lem}[{\cite{Car09}}]\label{lem:car}
    Let $p \in [1,\infty]$. Suppose that a family of operators $\{T_j\}$ on a Hilbert space is mutually orthogonal in the sense that  $T_j T_k^*  =  T_j^* T_k = 0$ for every $j\neq k$. Then
    \[\Big\|\sum_j T_j\Big\|_{\mathcal C^p} =\big(\sum_j \|T_j\|_{\mathcal C^p}^p\big)^{1/p}\]
    where the right-hand side is interpreted as a supremum when $p=\infty$.
\end{lem}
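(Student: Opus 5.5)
The plan is to reduce the identity to its two extreme cases $p=2$ and $p=\infty$, where the orthogonality hypothesis gives explicit formulas, and then interpolate. Write $T=\sum_j T_j$. Since the statement is only meaningful when the right-hand side is finite, I assume $\sum_j\|T_j\|_{\mathcal C^p}^p<\infty$ (for $p<\infty$), and I first treat finite families, passing to infinite families at the end by a limiting argument. The orthogonality relations give
\[
T^*T=\sum_{j,k}T_j^*T_k=\sum_j T_j^*T_j,\qquad TT^*=\sum_j T_jT_j^*.
\]
Hence $|T|^2=\sum_j|T_j|^2$. The operators $|T_j|^2=T_j^*T_j$ are mutually orthogonal in the strong sense that $|T_j|^2|T_k|^2=T_j^*(T_jT_k^*)T_k=0$ for $j\ne k$.

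The first key step is to show that the ranges of the $|T_j|$ are mutually orthogonal. For $j\ne k$ we have $\langle T_j^*T_j x, T_k^*T_k y\rangle=\langle T_kT_k^*T_jT_j^{*}\cdots\rangle$; more directly, $T_jT_k^*=0$ means $\operatorname{Ran}T_k^*\perp\operatorname{Ran}T_j^*$. The closure of $\operatorname{Ran}|T_j|$ equals the closure of $\operatorname{Ran}T_j^*$, so the closed subspaces $H_j:=\overline{\operatorname{Ran}T_j^*}$ are mutually orthogonal. Each positive operator $|T_j|^2$ lives on $H_j$ and vanishes on $H_j^\perp$. It follows that $|T|^2=\bigoplus_j |T_j|^2$ is block diagonal, and by functional calculus $|T|=\bigoplus_j|T_j|$, i.e. $|T|=\sum_j|T_j|$ with pairwise orthogonal supports.

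With this in hand both extreme cases are immediate. For $1\le p<\infty$, the spectral function calculus gives $|T|^p=\sum_j|T_j|^p$. Taking traces then yields
\[
\|T\|_{\mathcal C^p}^p=\operatorname{tr}|T|^p=\sum_j\operatorname{tr}|T_j|^p=\sum_j\|T_j\|_{\mathcal C^p}^p.
\]
Equivalently, the singular values of $T$ are the union, with multiplicities, of the singular values of the $T_j$. For $p=\infty$, the operator norm of a block diagonal operator is the supremum of the norms of its blocks: $\|T\|=\||T|\|=\sup_j\||T_j|\|=\sup_j\|T_j\|$. So no interpolation is actually needed once the singular value statement is established; the singular value description handles all $p$ uniformly.

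The step I expect to need the most care is the passage to infinite families, including the sense in which $\sum_jT_j$ converges. For finite partial sums $S_N=\sum_{j\le N}T_j$, the identity already proven shows that $(S_N)$ is Cauchy in $\mathcal C^p$ when $p<\infty$, since $\|S_N-S_M\|_{\mathcal C^p}^p=\sum_{M<j\le N}\|T_j\|^p_{\mathcal C^p}$. The limit therefore exists in $\mathcal C^p$, and the norm identity passes to the limit. For $p=\infty$ I would interpret the sum in the strong operator topology. On $\bigoplus_j H_j$, $T$ acts as $T_j$ on each $H_j$, with orthogonal ranges $\overline{\operatorname{Ran}T_j}$ (orthogonal because $T_j^*T_k=0$). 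Hence the strong sum converges and $\|T\|=\sup_j\|T_j\|$.
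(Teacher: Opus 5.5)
Your argument is correct and is essentially the standard proof of this fact; the paper gives no proof of its own and simply cites \cite{Car09}. In that argument, orthogonality gives $|T|^2=\sum_j|T_j|^2$ with summands supported on the mutually orthogonal subspaces $\overline{\operatorname{Ran}T_j^*}$, so $|T|^p=\sum_j|T_j|^p$, and one takes traces, or the supremum of the block norms when $p=\infty$. The interpolation you announce at the start is never used, and the garbled inner-product line before ``more directly'' should be deleted. Since the partition $\mathcal J$ in the paper is finite, your limiting argument is not needed for the paper's application.
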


When $J \neq J'$, $\chi_J \chi_{J'}=0$ and hence $U_J^*U_{J'}=0$. 
Hence
\begin{align}\label{disj UU*}
(U_{J}U_{J}^*)(U_{J'} U_{J'}^*)^*=(U_J U_J^*)^*(U_{J'}U_{J'}^*)=0.
\end{align}
Thus, Lemma \ref{lem:car}, applied with $p=\beta'$, gives
\begin{align*}
\big\| I_{\mathrm{diag}} \big\|_{\mathcal C^{\beta'}}
=\Big(\sum_{J\in \mathcal J} \big\| U_J U_J^* \big\|_{\mathcal C^{\beta'}}^{\beta'}\Big)^{\frac 1{\beta'}}
=\Big(\sum_{J\in\mathcal J}\|U_J\|_{\mathcal C^{2\beta'}}^{2\beta'}\Big)^{1/\beta'}.
\end{align*}
Applying \textbf{(D-J)} to each $J$, we obtain
\[
\big\| I_{\mathrm{diag}} \big\|_{\mathcal C^{\beta'}}
\le C(\nu)\Big(\sum_{J\in \mathcal J}\big\| W \chi_J\big\|_{L_x^{\tilde q}(d\mu)L_t^{\tilde r}(dt)}^{2\beta'}\Big)^{\frac1{\beta'}}.
\]
The assumptions $\beta\le q/2$ and $\beta \le r/2$ imply $2\beta'\ge \tilde q$ and $2\beta'\ge \tilde r$, respectively.  Hence, Minkowski's inequality, followed by the embedding $\ell^{\tilde r} \subset \ell^{2\beta'}$, yields
\begin{equation}
\begin{aligned}\label{W-sum}
\Big(\sum_J\big\| W \chi_J\big\|_{L_x^{\tilde q}L_t^{\tilde r}}^{2\beta'}\Big)^{\frac1{\beta'}}
&\le \Big\| \Big(\sum_J\big\| W\chi_J\big\|_{L_t^{\tilde r}}^{2\beta'} \Big)^{\frac1{2\beta'}} \Big\|_{L_x^{\tilde q}}^2\\
&\le \Big\| \Big(\sum_J\big\| W\chi_J\big\|_{L_t^{\tilde r}}^{\tilde r} \Big)^{\frac1{\tilde r}} \Big\|^2_{L_x^{\tilde q}}
=\|W\|_{L_x^{\tilde q}(d\mu)L_t^{\tilde r}(dt)}^2.
\end{aligned}
\end{equation}
Combining these estimates proves \eqref{eq_goal} for $\mathcal I=I_{\mathrm{diag}}$.

\subsection{Estimate for $I_{\mathrm{near}}$}

For each fixed $J\in \mathcal J$ and $m\in\mathbb Z$, define
\[
J_m=J_m(J):=I\cap (J+m\nu^{\theta-1})
=\{t+m\nu^{\theta-1}\in I:t\in J\}.
\]
Then $J_m$ is either an element of $\mathcal J$ or the empty set.
Thus, with
\[
\mathcal M=\{m\in\mathbb Z:0<|m|\le 1+C_0 \},
\]
the near-diagonal term can be written as
\[
I_{\mathrm{near}}
=
\sum_{m\in\mathcal M}\sum_{J\in\mathcal J}
U_JU_{J_m}^*,
\]
where the terms with $J_m=\emptyset$ are omitted.

Fix $m\in\mathcal M$. If $J\neq J'$, then similar to \eqref{disj UU*}, $(U_JU_{J_m}^*)(U_{J'}U_{J'_m}^*)^*
=0$ and $(U_JU_{J_m}^*)^*(U_{J'}U_{J'_m}^*)=0$. Hence the family $\{U_J U_{J_m}^*\}_{J}$ is mutually orthogonal. The triangle inequality and Lemma \ref{lem:car} now give
\begin{equation}\label{eq_tem1}
\big\| I_{\mathrm{near}} \big\|_{\mathcal C^{\beta'}}
\le \sum_{m\in \mathcal M} \Big(\sum_J \big\| U_JU_{J_m}^* \big\|_{\mathcal C^{\beta'}}^{\beta'} \Big)^{\frac 1{\beta'}}.
\end{equation}
By H\"older's inequality for Schatten norms,
\begin{equation}\label{eq_tem}
\big\| U_JU_{J_m}^* \big\|_{\mathcal C^{\beta'}}
\le \big\| U_J \big\|_{\mathcal C^{2\beta'}} \big\|U_{J_m}^* \big\|_{\mathcal C^{2\beta'}}.
\end{equation}
Moreover, $\|U_J\|_{\mathcal C^{2\beta'}}^2 =\|U_J^* U_J\|_{\mathcal C^{\beta'}}=\|U_J U_J^*\|_{\mathcal C^{\beta'}}$ and $\|U_{J_m}^*\|_{\mathcal C^{2\beta'}}^2= \|U_{J_m} U_{J_m}^*\|_{\mathcal C^{\beta'}}$. 
Combining \eqref{eq_tem1}, \eqref{eq_tem} and (\textbf{D-J}), and then applying Cauchy--Schwarz to the sum over $J$,
\begin{align*}
 \big\| I_{\mathrm{near}} \big\|_{\mathcal C^{\beta'}}
&\le C(\nu) \sum_m \Big(\sum_J \big\|W \chi_J\big\|_{L_x^{\tilde q}L_t^{\tilde r}}^{\beta'} \big\|W \chi_{J_m}\big\|_{L_x^{\tilde q}L_t^{\tilde r}}	^{{\beta'}} \Big)^{\frac 1{\beta'}}\\
&\le C(\nu)\sum_m \Big(\sum_J \big\|W\chi_J\big\|_{L_x^{\tilde q}L_t^{\tilde r}}	^{2{\beta'}}\Big)^{\frac1{2\beta'}} \Big(\sum_{J} \big\|W\chi_{J_m}\big\|_{L_x^{\tilde q}L_t^{\tilde r}}^{2{\beta'}}\Big)^{\frac 1{2\beta'}}.   
\end{align*}
By \eqref{W-sum} and injectivity of $J\mapsto J_m$, both $\ell^{2\beta'}$ sums over $J$
are bounded by $\|W\|_{L_x^{\tilde q}(d\mu;L_t^{\tilde r}(dt))}$. Since $\mathcal M$ is finite, we get 
\[\big\| I_{\mathrm{near}}\|_{\mathcal C^{\beta'}}\lesssim 
C(\nu)\|W\|_{L_x^{\tilde q}(d\mu;L_t^{\tilde r}(dt))}^2. \]
This proves \eqref{eq_goal} for $\mathcal I=I_{\mathrm{near}}$. It remains to estimate $I_{\mathrm{far}}$.

\subsection{Estimates for $I_{\mathrm{far}}$} 

The estimate \eqref{eq_goal}  for $\mathcal{I}=I_{\mathrm{far}}$ is simpler because the kernel decays rapidly away from the temporal diagonal, and $W$ is supported on a set $B_1^d\times I$ of finite measure. In fact, we prove the following stronger estimate.

\begin{lem}\label{lem:Ifar} 
Let $2\le \beta' \le \infty$ and $\tilde q, \tilde r\in [2,\infty]$.
For every $N\in \N,$ we have
\begin{equation}\label{e:B}
\| I_{\mathrm{far}}\|_{\mathcal C^{\beta'}} \le C_N \nu^{N} \|W\|_{L^{\tilde q}_x(d\mu; L^{\tilde r}_t(dt))}^2.
\end{equation}
for every $W \in L^{\tilde q}_x(d\mu) L^{\tilde r}_t(dt)$ supported on $B^d_1\times I.$
\end{lem}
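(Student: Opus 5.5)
Since $\beta'\ge2$, we have $\mathcal C^2\subset\mathcal C^{\beta'}$ with $\|\cdot\|_{\mathcal C^{\beta'}}\le\|\cdot\|_{\mathcal C^2}$, so it suffices to bound the Hilbert--Schmidt norm of $I_{\mathrm{far}}$. This operator has kernel
\[
W(x,t)\,K_\nu(x,t,x',t')\,\overline{W(x',t')}\,\chi_{\{|t-t'|\gtrsim C_0\nu^{\theta-1}\}}(t,t'),
\]
where the restriction comes from summing only over pairs $J,J'$ with $\operatorname{dist}(J,J')>C_0\nu^{\theta-1}$. For such pairs every $t\in J$, $t'\in J'$ satisfies $|t-t'|\ge C_0\nu^{\theta-1}$, so the hypothesis \eqref{e:TT* hypothesis2} applies pointwise on the support of the kernel.

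On this support we discard the spatial decay factor and use
\[
|K_\nu|\lesssim_N \nu^{-d}(1+\nu^{-\theta}|t-t'|)^{-N}\le \nu^{-d}(C_0\nu^{-1})^{-N}\lesssim \nu^{N-d},
\]
which holds because $\nu^{-\theta}|t-t'|\ge C_0\nu^{-1}$. Since $N$ is arbitrary, the kernel bound is $O(\nu^{M})$ for every $M$. The Hilbert--Schmidt norm with respect to $d\mu\,dt$ then satisfies
\[
\|I_{\mathrm{far}}\|_{\mathcal C^2}^2\lesssim \nu^{2M}\iint |W(x,t)|^2|W(x',t')|^2\,d\mu(x)dt\,d\mu(x')dt' = \nu^{2M}\|W\|_{L^2(B_1^d\times I,d\mu dt)}^4 .
\]

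Finally we pass from $L^2$ to the mixed norm. Because $\tilde q,\tilde r\ge2$ and $B_1^d\times I$ has finite $\mu\times dt$ measure (we have $\mu(B_1^d)\le C_\mu$), Hölder's inequality in $t$ and then in $x$ gives
\[
\|W\|_{L^2_xL^2_t}\lesssim \|W\|_{L_x^{\tilde q}(d\mu;L_t^{\tilde r}(dt))}.
\]
The implicit constant depends only on $C_\mu$ and $|I|$, not on $\nu$. Taking square roots yields \eqref{e:B} with $\nu^M$ for every $M$.

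The only point needing care is the measurability and support bookkeeping. We must check that $I_{\mathrm{far}}$ is genuinely the integral operator with kernel equal to the restricted kernel $K_\nu$ multiplied by $W$ and $\overline W$, which follows from $U_JU_{J'}^*=\chi_JWTT^*\overline W\chi_{J'}$. We must also check that the pairs in the far sum satisfy the separation $|t-t'|\ge C_0\nu^{\theta-1}$ required by the kernel hypothesis. There is no real obstacle beyond this: the polynomial loss $\nu^{-d}$ is absorbed by the arbitrary decay in $N$.
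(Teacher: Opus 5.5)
Your proposal is correct and follows essentially the same route as the paper: you reduce to the Hilbert--Schmidt norm via $\mathcal C^2\subset\mathcal C^{\beta'}$, bound the far-restricted kernel pointwise by $\nu^{-d}(C_0\nu^{-1})^{-N}$ using the temporal separation, and pass from $L^2$ to the mixed norm by H\"older's inequality. You also make explicit the finiteness of $\mu(B_1^d)\le C_\mu$ and $|I|$ in the last step, which the paper uses without stating it.
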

Clearly, the estimate \eqref{eq_goal} for $\mathcal{I}= I_{\mathrm{far}}$ follows from Lemma \ref{lem:Ifar} with $N>N_\circ$, since $C(\nu)\gtrsim \nu^{N_\circ}$ for some $N_\circ\in \mathbb N$.

\begin{proof}[Proof of Lemma \ref{lem:Ifar}] 
Since $\mathcal C^2 \subset \mathcal C^{\beta'}$ for $\beta'\ge 2$, it suffices to prove \eqref{e:B} when $\beta'=2$.
Define
\[
E_\nu(x,t,x',t')=\sum_{\substack{J,J'\in \mathcal J,\\\text{dist}(J,J')>C_0\nu^{\theta-1}}}\chi_J(t) K_\nu(x,t,x',t')\chi_{J'}(t'),
\]
where $K_\nu$ is the kernel of $TT^*$. The kernel of $I_{\text{far}}$ is $W(x,t)E_\nu(x,t,x',t')\overline{W(x',t')}$.

We first prove that, for every $N\in\N$, and $(x,t), (x',t')\in B_1^d\times I$,
\begin{equation}\label{e:ptB}
|E_\nu(x,t,x',t')| \lesssim_N \nu^{N}.
\end{equation}
Indeed, let $M=N+d$. If $\dist(J,J')\ge C_0 \nu^{\theta-1}$, $t\in J$ and $t'\in J'$, then $|t-t'|\ge C_0 \nu^{\theta-1}$.
By definition, $\psi_{\nu,\theta}^M(x-x',t-t')\lesssim \nu^{-d}(1+C_0\nu^{-1})^{-M}\lesssim \nu^N$.
Consequently, \eqref{e:TT* hypothesis2} applied with decay order $M$, gives
\begin{equation*}\label{eq_ptbd}
    |\chi_J(t) K_\nu(x,t, x',t')\chi_{J'}(t')|
    \lesssim_N \nu^N\, \chi_J(t)\chi_{J'}(t').
\end{equation*}
Taking the sum over $J$ and $J'$ satisfying $\dist(J,J')> C_0 \nu^{\theta-1}$,  we have \eqref{e:ptB}.

Since the $\mathcal C^2$ norm equals the Hilbert--Schmidt norm,
\[
\| I_{\mathrm{far}}\|_{\mathcal C^2}^2 = \int_{B_1^d\times I}\int_{B_1^d\times I} |W(x,t)E_\nu(x,t,x',t')W(x',t')|^2 d\mu(x')dt' d\mu(x) dt.
\]
It follows from \eqref{e:ptB} and H\"older's inequality that
\begin{equation*}
\| I_{\mathrm{far}}\|_{\mathcal C^2} 
\lesssim \nu^N   \|W\|_{L^{2}(B_1^d\times I, d\mu dt)}^2
\lesssim \nu^N   \|W\|_{L^{\tilde q}_x(d\mu; L^{\tilde r}_t(dt))}^2.
\end{equation*}
The last inequality uses $\tilde q,\tilde r\ge2$. This proves the lemma.
\end{proof}

\subsection{Completion of the proof for $L_t^{r/2}L_x^{q/2}$ norm}
\label{subsec:time norm}
We now prove Proposition~\ref{lem:glob2} for $Y(I)=L_t^{r/2}(I;L_x^{q/2}(B_1^d,d\mu))$.
Here we assume $\beta\le r/2$ only since the intervals $J$ partition $I$ and $L_t^{\tilde r}$ is taken as the outer norm.
Only the summation step \eqref{W-sum} changes.
The assumption $\beta\le r/2$ gives $2\beta'\ge\tilde r$.
Consequently,
\[
\Big(\sum_J\|W\chi_J\|_{L_t^{\tilde r}L_x^{\tilde q}}^{2\beta'} \Big)^{1/\beta'}
\le
\Big(\sum_J\|W\chi_J\|_{L_t^{\tilde r}L_x^{\tilde q}}^{\tilde r} \Big)^{2/\tilde r}=\|W\|_{L_t^{\tilde r}L_x^{\tilde q}}^2.
\]
Unlike \eqref{W-sum}, this argument does not interchange a sequence norm
with a spatial norm, so it requires no relation between $2\beta'$ and
$\tilde q$.
This completes the proof of Proposition \ref{lem:glob2}. \qed

\section{Necessary conditions on the regularity order}\label{sec:example}
In this section, we prove Theorem \ref{thm:ex}. To obtain lower bounds on $s$ for which \eqref{beta-beta} holds for $(f_j)\subset H^s(\R^d)$, we combine the single function example of \cite{Bourgain16} in $l$ variables with the many particle example of \cite{BKS_TLMS} in the remaining $(d-l)$ variables.

\subsection{Two examples}
We first record the two ingredients.

\begin{lem}\label{lem:Bourgain}
Let $l\ge1$. For arbitrarily large $R>1$,
there exist a constant $C>1$, independent of $R$, and 
a function $h_R$ with $\|h_R\|_{L^2(\R^l)}=1$ and $\supp \widehat{h}_R \subset \{\xi'\in \R^l: C^{-1}R\le |\xi'|\le CR\}$, and a measurable set
$\Omega_R\subset B_{1/2}^l$ with $|\Omega_R|\gtrsim 1$ such that,  for each $x'\in \Omega_R$, there exists $\tau_R(x')\in(0,CR^{-1})$ satisfying
\begin{align}\label{hr-low}
 \left|e^{i\tau_R(x')\Delta_l}h_R(x')\right|
 &\gtrsim R^{\frac{l}{2(l+1)}}.
\end{align}
\end{lem}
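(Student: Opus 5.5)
The plan is to extract Lemma \ref{lem:Bourgain} from the known single-function counterexamples: Bourgain's construction \cite{Bourgain16} for $l\ge2$ and the Dahlberg--Kenig example \cite{DK82} for $l=1$. These examples are usually stated as the failure of the maximal estimate. What we need is the explicit form behind that failure: a frequency-localized function whose solution is large on a set of positive measure, at times of order $R^{-1}$. I would therefore go back into the constructions, rescale them to the normalization $\|h_R\|_2=1$, and read off the bound \eqref{hr-low}.

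\textbf{Case $l=1$.} Take $\widehat{h}_R(\xi)=R^{-1/2}\chi(\xi/R-1)$ for a fixed bump $\chi$ supported near $0$. Then $\|h_R\|_2\sim1$ and $\supp\widehat h_R\subset\{|\xi|\sim R\}$. Write
\[
e^{it\partial_x^2}h_R(x)=R^{-1/2}\int e^{i(x\xi-t\xi^2)}\chi(\xi/R-1)\,d\xi .
\]
Substituting $\xi=R+R\eta$ makes the phase $R(x-2tR)\eta - tR^2\eta^2$ plus a term independent of $\eta$. Choose $\tau_R(x)=x/(2R)$, which lies in $(0,CR^{-1})$ for $x\in(0,1/2)$. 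At this time the linear term vanishes and $tR^2\le R$. If instead we take $\chi$ of width $R^{-1/2}$, i.e. $\widehat h_R=R^{-1/4}\chi((\xi-R)/R^{1/2})$, the quadratic phase is $O(1)$ and the integral has modulus $\gtrsim R^{1/2}$. This gives $|e^{i\tau_R\partial_x^2}h_R(x)|\gtrsim R^{-1/4}R^{1/2}=R^{1/4}=R^{l/(2(l+1))}$ on $\Omega_R=(0,1/2)$. This is exactly the Dahlberg--Kenig example. Translating in $x$ places $\Omega_R$ inside $B^1_{1/2}$.

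\textbf{Case $l\ge2$.} Use Bourgain's example in the form given by Du--Guth--Li or Lucà--Rogers. Take $\widehat{h}_R$ to be a normalized sum of bumps of size about $1$ centered on a lattice in the frequency annulus $|\xi'|\sim R$, on a slab that is long in the first coordinate, together with a modulation $e^{iR\xi_1'}$-type shift. The Gauss-sum structure then makes $e^{it\Delta}h_R$ refocus. For each $x'$ in a set of positive measure, there is a time $t\lesssim R^{-1}$, determined by rational approximation via Dirichlet's theorem, at which the lattice phases nearly align. This produces the gain $R^{l/(2(l+1))}$ relative to $\|h_R\|_2$.

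Concretely, I would cite the precise statement of \cite{Bourgain16}, as reformulated in \cite[Section 2]{LR17} or \cite{DGL17}: $|\sup_{0<t<R^{-1}}|e^{it\Delta}h_R||\gtrsim R^{l/(2(l+1))}\|h_R\|_2$ on a set of measure $\gtrsim1$ in $B^l_{1/2}$, with $\widehat h_R$ supported in $|\xi'|\sim R$. Choosing a near-maximizing time measurably in $x'$ then gives $\tau_R(x')$. Strictly speaking a measurable selection is needed, but only a lower bound at some time is required, and $t\mapsto e^{it\Delta}h_R(x')$ is continuous. So I can take $\tau_R(x')$ to be a rational time achieving at least half of the supremum, and this choice is measurable.

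The main obstacle is checking that the cited construction really gives times in $(0,CR^{-1})$, not merely $(0,1)$, and that the frequency support is an annulus rather than a ball. Bourgain's original times are $\lesssim R^{-1}$ after the standard parabolic rescaling, and the frequencies are shifted by $Re_1$, so both properties should hold. I would verify this by tracking the scaling $x\to Rx$, $t\to R^2t$ in the Lucà--Rogers presentation.
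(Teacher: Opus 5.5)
Your proposal is correct and follows essentially the paper's route: the paper also just imports Bourgain's construction for $l\ge2$ (reading off the annular Fourier support, the $L^2$ normalization and the $O(R^{-1})$ times, then dilating the set into $B_{1/2}^l$) and the Dahlberg--Kenig example for $l=1$, and your explicit computation with $\widehat h_R=R^{-1/4}\chi((\xi-R)/R^{1/2})$, $\tau_R(x)=x/(2R)$ is a correct self-contained check of the case $l=1$, provided $\chi\ge0$ is supported in $[-1,1]$ so that the residual phase $\frac x2\eta^2\le\frac14$ causes no cancellation. One correction to your heuristic sketch for $l\ge2$: Bourgain's datum has Fourier support that is thin in $\xi_1$ around $R$ (coming from the modulation $e^{iRx_1}$) and spread over unit bumps on a lattice $D\Z^{l-1}$ of extent $\lesssim R$ in the other frequency variables, not a slab long in the first coordinate; since your argument rests on citing the construction, this does not create a gap.
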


For $l\ge2$, Lemma \ref{lem:Bourgain} is the normalized form of Bourgain's counterexample in \cite{Bourgain16}. The frequency localization and $L^2$ normalization follow from \cite[(2.2)]{Bourgain16}, while the construction in \cite[(2.3)--(2.9)]{Bourgain16} gives a set of fixed positive measure on which \eqref{hr-low} holds at time of size $O(R^{-1})$. After spatial dilation, this set may be taken inside $B_{1/2}^l$.
The case $l=1$ follows from the classical example of Dahlberg--Kenig \cite{DK82}.

\begin{lem}\label{lem:BKS2}
Let $1\le r \le d$ and $A>0$.  There exist constants $0<\delta<1/2$ and $c_0>0$ such that, for every sufficiently large $R>1$,
the set $J_R = (B_R^r \setminus B_{R/2}^r)\cap \Z^r$ satisfies $|J_R|\sim R^r$, and there exists an orthonormal family $(p_j)$, $j\in J_R$ in $L^2(\R^r)$ such that $\supp \,\widehat{p}_j \subset B_\delta^r(j)$, and
\begin{align}\label{pj-low}
|e^{it\Delta_r}p_j(x'')|\ge c_0
\end{align}
whenever $j\in J_R$, $x''\in B_{1/2}^r$, and $0\le t\le AR^{-1}$.
\end{lem}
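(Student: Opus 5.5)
The plan is to use the many-particle example of \cite{BKS_TLMS}: a lattice of frequency-translated bumps that stay essentially coherent for times $t\lesssim R^{-1}$. Fix $A>0$ and $\delta<1/2$ small, to be chosen at the end. Fix a real-valued $\eta\in C_0^\infty(B_\delta^r)$ with $\|\eta\|_{L^2}=1$, $\eta\ge0$, and $\int\eta>0$. For $j\in J_R$ set
\[
\widehat{p}_j(\xi)=\eta(\xi-j),\qquad\text{that is,}\qquad p_j(x)=e^{ix\cdot j}\check\eta(x).
\]
Then $\supp\widehat p_j\subset B_\delta^r(j)$, and since $\delta<1/2$ the supports are disjoint for distinct lattice points. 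Plancherel therefore gives orthonormality directly. The count $|J_R|\sim R^r$ is immediate from lattice point counting in the annulus $B_R^r\setminus B_{R/2}^r$.

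For the lower bound \eqref{pj-low}, write $\xi=j+\eta'$ with $|\eta'|\le\delta$. Then
\[
e^{it\Delta_r}p_j(x)=(2\pi)^{-r}e^{i(x\cdot j-t|j|^2)}\int e^{i(x-2tj)\cdot\eta'-it|\eta'|^2}\eta(\eta')\,d\eta'.
\]
The modulus of this equals that of the integral, so it suffices to show the remaining phase is small on $\supp\eta$. For $x\in B_{1/2}^r$, $0\le t\le AR^{-1}$ and $|j|\le R$, we have $|x-2tj|\le \tfrac12+2A$ and $t|\eta'|^2\le A\delta^2R^{-1}$. The total phase is therefore at most $(\tfrac12+2A)\delta+A\delta^2$.

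This is the only real point of the proof. The transport term $2tj$ has size $O(A)$ uniformly in $R$, because $t\lesssim R^{-1}$ exactly compensates $|j|\sim R$. So choosing $\delta=\delta(A)$ small makes the phase at most $1/2$, say. Then $\cos(\text{phase})\ge\cos(1/2)$, and the real part of the integral is at least $\cos(1/2)\int\eta$. This gives \eqref{pj-low} with
\[
c_0=(2\pi)^{-r}\cos(1/2)\int\eta>0,
\]
uniform in $j$, $x$, $t$ and $R$.

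I expect no genuine obstacle. The care needed is in choosing $\delta$ in terms of $A$ before $\eta$, with $\eta$ depending only on $\delta$ and not on $R$. This makes $c_0$ and $\delta$ independent of $R$, as the statement requires.
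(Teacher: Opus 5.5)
Your proposal is correct and is essentially the paper's proof: the paper also takes $p_j(x'')=e^{ij\cdot x''}\psi(x'')$ with $\widehat\psi\ge0$ supported in $B_\delta^r$, gets orthonormality from the disjoint Fourier supports, and uses $|x''-2tj|\le\frac12+2A$ to choose $\delta$ small; your explicit phase bound and value of $c_0$ simply spell out the final step. One cosmetic fix: with the $(2\pi)^{-r}$ inversion convention you use, Plancherel gives $\|p_j\|_{2}=(2\pi)^{-r/2}\|\eta\|_{2}$, so you should normalize $\|\eta\|_2=(2\pi)^{r/2}$, which only changes $c_0$ by a constant factor.
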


A closely related construction is used in \cite{BKS_TLMS}. For completeness, we include the elementary proof.

\begin{proof}
Let $0<\delta<1/2$ be sufficiently small and let $\psi\in \mathcal S(\mathbb R^r)$ satisfy
$\|\psi\|_2=1$, $\widehat\psi\geq0$, and $\supp\widehat\psi\subset B_\delta^r$. 
For $j\in J_R$, define
\[
p_j(x'')=e^{ij\cdot x''}\psi(x'')
\]
so that $\supp \widehat p_j \subset B_\delta^r(j)$. Since $\delta<1/2$, the Fourier supports are pairwise disjoint.
Hence $(p_j)$, $j\in J_R$ are orthonormal in $L^2(\R^r)$ since $\|p_j\|_2=1$.

Since $\widehat p_j=\widehat \psi(\cdot-j)$, we observe that
\begin{align*}
\big|e^{it\Delta_r}p_j(x'')\big|=(2\pi)^{-r}
\Big|\int e^{i(x''-2tj)\cdot \xi''-it|\xi''|^2} \widehat \psi(\xi'')\,d\xi'' \Big|.
\end{align*}
If $x''\in B_{1/2}^r$, $j\in J_R$, and $0\leq t\le AR^{-1}$, then $|x''-2tj|\le \frac12+2A$. By taking $\delta>0$ sufficiently small, \eqref{pj-low} holds provided $x''\in B_{1/2}^r$ and $0\leq t\le AR^{-1}$.
\end{proof}

We now prove Theorem \ref{thm:ex} using Lemmas \ref{lem:Bourgain} and \ref{lem:BKS2}.

\subsection{Proof of Theorem \ref{thm:ex}}
Fix $l\in\{0,1,\ldots,d\}$ and set $r=d-l$.
Let $h_R$, $\Omega_R$, and
$\tau_R$ be as in Lemma \ref{lem:Bourgain}.
Also, let $J_R$ and $(p_j)_{j\in J_R}$ be as in Lemma \ref{lem:BKS2},
applied with $A=C$, where $C$ is the constant in Lemma \ref{lem:Bourgain}.

For $ (x',x'')\in\R^l\times\R^{d-l}$, set
\[
g_j(x',x'')=h_R(x')p_j(x'').
\]
Each $g_j$ has $L^2$ norm $1$. Moreover, the identity $ \widehat g_j(\xi) = \widehat h_R(\xi')\widehat p_j(\xi'')$ shows that their Fourier supports are pairwise disjoint and contained in $\{\xi:|\xi|\sim R\}$, uniformly in $j\in J_R$. Thus $\|g_j\|_{H^s(\R^d)}\sim R^s$, and the normalized functions $f_j:=g_j/\|g_j\|_{H^s(\R^d)}$ form an orthonormal family in $H^s(\R^d)$.

Note that
\begin{equation*}
 e^{it\Delta}f_j(x',x'')
 =\|g_j\|_{H^s(\R^d)}^{-1}
 \bigl(e^{it\Delta_l}h_R\bigr)(x')
 \bigl(e^{it\Delta_{d-l}}p_j\bigr)(x'').
\end{equation*}
Here, $\Delta_l$ and $\Delta_{d-l}$ denote the Laplacians in the $x'$, $x''$ variables, respectively.
For each $x'\in \Omega_R$, choose $t=\tau_R(x')$ as in Lemma \ref{lem:Bourgain}. Since $0<\tau_R(x')<CR^{-1}$,
the lower bound \eqref{pj-low} holds at this time
simultaneously for every $j\in J_R$. Hence \eqref{hr-low} and
\eqref{pj-low} imply
\begin{equation}\label{eq:each-lower}
 \left|
 e^{i\tau_R(x')\Delta}f_j(x',x'')
 \right|
 \gtrsim
 R^{-s+\frac{l}{2(l+1)}}
\end{equation}
for every $j\in J_R$ and $(x',x'')\in \Omega_R \times B_{1/2}^{d-l} \subset B_1^d$. Moreover $|\Omega_R\times B_{1/2}^{d-l}|\gtrsim 1$.

Set $\lambda_j=1$ for every $j\in J_R$. Then $\|(\lambda_j)\|_{\ell^\beta}\sim R^{\frac {d-l}\beta}$. Since the same time
$\tau_R(x')$ is used for all $j$, it follows from
\eqref{eq:each-lower} that
\begin{align*}
\sup_{t\in I}
\sum_{j\in J_R}
\left|e^{it\Delta}f_j(x',x'')\right|^2
&\geq
\sum_{j\in J_R}
\left|e^{i\tau_R(x')\Delta}f_j(x',x'')\right|^2
\gtrsim |J_R| R^{\frac{l}{l+1}-2s}
\end{align*}
on $\Omega_R\times B_{1/2}^r$. Therefore,
\begin{equation*}
\Big\| \sup_{t\in I} \Big| \sum_{j\in J_R}\big|e^{it\Delta}f_j\big|^2 \Big|
\Big\|_{L_x^\beta(B_1^d)}
\gtrsim
R^{(d-l)+\frac{l}{l+1}-2s}.
\end{equation*}

Thus, if \eqref{beta-beta} holds uniformly in $R$, then
$ (d-l)+\frac{l}{l+1}-2s \leq\frac {d-l}\beta$.
This gives
\[
 s
 \geq
 \frac{l}{2(l+1)}
 +
 \frac{d-l}{2}
 \left(1-\frac1\beta\right)
\]
for every $l\in\{0,1,\ldots,d\}$. Hence \eqref{beta-beta} holds only if $s\ge \sigma(d,\beta)$ defined in \eqref{nec range}.
\qed

\section{Remarks on the Strichartz estimates}\label{sec:7}

Although the main focus of this paper is on maximal estimates, the pointwise
identity established in Section~\ref{sub:twisted} also gives an exact
correspondence between Strichartz estimates for the special Hermite and free
Schr\"odinger propagators. This is analogous to the Sj\"ogren--Torrea identity
for the Hermite propagator. We use this observation below to transfer
orthonormal Strichartz estimates from the free Schr\"odinger flow to the
special Hermite flow and, in particular, to recover the result of
Ghosh--Mondal--Swain \cite{GMS}.

\subsection{An identity for the special Hermite flow}

Recall that the special Hermite operator $\mathcal L$ acts on
$\mathbb C^d\simeq\mathbb R^{n}$. We write $n=2d$ for the underlying real
dimension. The following identity is the analogue of the Sj\"ogren--Torrea
identity for the Hermite propagator.

\begin{prop}\label{prop:sp-stri}
Let $d\geq1$ and $n=2d$. Let $2\leq r,q\leq\infty$ satisfy
\[
    \frac{2}{r}+\frac{n}{q}=\frac{n}{2}.
\]
Then, for all orthonormal systems $(f_j)\subset L^2(\mathbb R^n)$
and all sequences $(\lambda_j)$,
\[
\Big\|
    \sum_j\lambda_j |e^{-it\mathcal L}f_j|^2
\Big\|_{L_t^{\frac r2}((0,\frac \pi2);L_z^{\frac q2}(\mathbb R^n))}
=
\Big\|
    \sum_j\lambda_j |e^{is\Delta}f_j|^2
\Big\|_{L_s^{\frac r2}((0,\infty);L_z^{\frac q2}(\mathbb R^n))}.
\]
In particular, taking a single function gives
\[
    \|e^{-it\mathcal L}f\|_{L_t^r((0,\frac \pi2);L_z^q(\mathbb R^n))}
    =
    \|e^{is\Delta}f\|_{L_s^r((0,\infty);L_z^q(\mathbb R^n))}.
\]
\end{prop}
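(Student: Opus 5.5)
The plan is to use the pointwise identity \eqref{id-special} together with two changes of variables: a spatial one at fixed $t$, and then the time substitution $s=\tan t$. Fix $t\in(0,\frac\pi2)$ and set $\tau=\tan t$. By \eqref{id-special}, for each $j$,
\[
|e^{-it\mathcal L}f_j(z)|^2=(1+\tau^2)^{d}\,|e^{i\tau\Delta}f_j(z-\tau Jz)|^2 ,
\]
because the factor $e^{-i\tau|z|^2/4}$ has modulus one. Multiplying by $\lambda_j$ and summing gives $\sum_j\lambda_j|e^{-it\mathcal L}f_j(z)|^2=(1+\tau^2)^{d}F_\tau\big((I-\tau J)z\big)$, where $F_\tau=\sum_j\lambda_j|e^{i\tau\Delta}f_j|^2$. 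The identity \eqref{id-special} was derived for Schwartz $f$, so I would first prove the statement for finite orthonormal families of Schwartz functions and finite sequences. The general case then follows by density and monotone convergence (or Fatou), since both sides are defined through these limits.

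Next, at fixed $t$ I change variables $w=(I-\tau J)z$. Since $J^\top J=I_n$ and $J^\top=-J$, we have $(I-\tau J)^\top(I-\tau J)=(1+\tau^2)I_n$. Hence $I-\tau J$ is $\sqrt{1+\tau^2}$ times an orthogonal matrix, and its Jacobian is $(1+\tau^2)^{n/2}=(1+\tau^2)^{d}$. Therefore
\[
\Big\|\sum_j\lambda_j|e^{-it\mathcal L}f_j|^2\Big\|_{L_z^{q/2}}=(1+\tau^2)^{d-\frac{2d}{q}}\,\|F_\tau\|_{L^{q/2}} = (1+\tau^2)^{\frac n2-\frac nq}\|F_\tau\|_{L^{q/2}} .
\]
The admissibility relation $\frac2r+\frac nq=\frac n2$ turns the exponent into $\frac n2-\frac nq=\frac2r$.

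For the time integral, I raise this to the power $r/2$ and integrate over $t\in(0,\frac\pi2)$, which gives $\int_0^{\pi/2}(1+\tan^2t)\,\|F_{\tan t}\|_{L^{q/2}}^{r/2}\,dt$. With $s=\tan t$ we have $ds=\sec^2t\,dt=(1+s^2)\,dt$, so the weight cancels exactly and the integral becomes $\int_0^\infty\|F_s\|_{L^{q/2}}^{r/2}\,ds$. Taking the $2/r$-th power yields the claimed equality. The cases $r=\infty$ (which forces $q=2$) and $q=\infty$ need a separate check. When $r=\infty$ the exponent $\frac2r$ is $0$ and the map $t\mapsto\tan t$ is a bijection, so the suprema agree. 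When $q=\infty$ the spatial change of variables leaves the $L^\infty$ norm unchanged and the prefactor $(1+\tau^2)^{d}$ has to match the Jacobian weight $(1+s^2)^{2/r}$; admissibility gives $\frac2r=\frac n2=d$, so they do. The single-function statement is the special case of one function with $\lambda_1=1$.

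I do not expect a genuine obstacle. The points that need care are the bookkeeping of exponents, so that admissibility makes the spatial scaling factor cancel the $ds/dt$ Jacobian exactly, and the justification of the density and limit step from Schwartz data to general orthonormal systems.
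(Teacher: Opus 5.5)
Your proposal is correct and is essentially the paper's proof. It uses the pointwise identity \eqref{id-special}, then the spatial change of variables $w=(I-\tau J)z$, whose Jacobian $(1+\tau^2)^{d}$ leaves the factor $(1+\tau^2)^{d-2d/q}=(1+\tau^2)^{2/r}$, and then the substitution $s=\tan t$, whose Jacobian cancels this factor exactly by admissibility.

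One point is worth tightening: the limiting step. Instead of approximating orthonormal systems by Schwartz ones and appealing to monotone convergence (which does not directly apply to complex $\lambda_j$), note that for each fixed $t$ both sides of \eqref{id-special} are $L^2$-isometries in $f$. So the identity holds for all $f\in L^2$ and a.e.\ $z$, and the argument becomes purely pointwise.
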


\begin{proof}
Let $s=\tan t$, so that $d t=(1+s^2)^{-1} d s$. By \eqref{id-special},
\[
    \sum_j\lambda_j |e^{-it\mathcal L}f_j(z)|^2
    =
    (1+s^2)^d
    \sum_j\lambda_j
    |e^{is\Delta}f_j((I_{2d}-sJ)z)|^2.
\]
Since $\det(I_{2d}-sJ)=(1+s^2)^d$,
the change of variables $\zeta=(I_{2d}-sJ)z$ gives
\[
\Big\|
    \sum_j\lambda_j |e^{-it\mathcal L}f_j|^2
\Big\|_{L_z^{\frac q2}}
=
(1+s^2)^{d-\frac{2d}{q}}
\Big\|
    \sum_j\lambda_j |e^{is\Delta}f_j|^2
\Big\|_{L_\zeta^{\frac q2}}.
\]
Consequently, after taking the $L_t^{\frac r2}$ norm and the change of variables $s=\tan t$, the remaining power of $1+s^2$ is
\[
    \frac r2\Big(d-\frac{2d}{q}\Big)-1
    =
    \frac r2 \Big(\frac n2- \frac nq-\frac 2r\Big),
\]
but this equals zero under the admissible condition. 
\end{proof}

Proposition~\ref{prop:sp-stri} shows that, on the Schr\"odinger admissible
line, orthonormal Strichartz estimates for the special Hermite operators are
inherited directly from those for the free Schr\"odinger operators. More precisely,
whenever
\[
\Big\|
    \sum_j\lambda_j |e^{is\Delta}f_j|^2
\Big\|_{L_s^{\frac r2}(\mathbb R;L_z^{\frac q2}(\mathbb R^n))}
\lesssim
\|\lambda\|_{\ell^\beta}
\]
holds for all orthonormal systems $(f_j)\subset L^2(\mathbb R^n)$ and all $(\lambda_j)\in \ell^\beta$, we obtain
\[
\Big\|
    \sum_j\lambda_j |e^{-it\mathcal L}f_j|^2
\Big\|_{L_t^{\frac r2}((0,\frac \pi2);L_z^{\frac q2}(\mathbb R^n))}
\lesssim
\|\lambda\|_{\ell^\beta}.
\]

The same estimate holds on every interval of length $\pi/2$. Indeed, let
$I_0=t_0+(0,\pi/2)$ and set $\tau=t-t_0$, $g_j=e^{-it_0\mathcal L}f_j$. Then
\[
\Big\|
    \sum_j\lambda_j |e^{-it\mathcal L}f_j|^2
\Big\|_{L_t^{\frac r2}(I_0;L_z^{\frac q2}(\mathbb R^n))}
=\Big\|
    \sum_j\lambda_j |e^{-i\tau\mathcal L}g_j|^2
\Big\|_{L_\tau^{\frac r2}((0,\frac\pi2);
L_z^{\frac q2}(\mathbb R^n))}
\]
and $(g_j)$ remains orthonormal since $e^{-it_0\mathcal L}$ is unitary on $L^2$. Thus the estimate extends to any fixed bounded time
interval by a finite covering.

\subsection{A simple application}

The preceding identity should be compared with the Hermite case studied by
Bez et al. \cite[Section~7.3]{BHLNS}. Using the Sj\"ogren--Torrea
identity \cite{ST}, they showed that orthonormal Strichartz estimates for the
Hermite propagator follow directly from the corresponding free Schr\"odinger
estimates. Proposition~\ref{prop:sp-stri} gives the analogous implication for
the special Hermite propagator.

\begin{cor}[\cite{GMS}, Theorem 1.6]\label{cor:GMS}
Let $d\geq1$. Suppose that
\[
    2\leq q<\frac{2(2d+1)}{2d-1},
    \qquad
    \frac{1}{r}+\frac{d}{q}=\frac{d}{2}.
\]
Then,
\[
\Big\|
    \sum_j\lambda_j |e^{-it\mathcal L}f_j|^2
\Big\|_{L_t^{\frac r2}((-\pi,\pi);
L_z^{\frac q2}(\mathbb C^d))}
\lesssim
\|\lambda\|_{\ell^{\frac{2q}{q+2}}}
\]
for all orthonormal systems $(f_j)\subset L^2(\mathbb C^d)$ and all
$(\lambda_j)\in\ell^{\frac{2q}{q+2}}$.
\end{cor}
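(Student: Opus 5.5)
Our plan is to combine Proposition~\ref{prop:sp-stri} with the known orthonormal Strichartz estimates for the free Schr\"odinger flow in real dimension $n=2d$. First we translate the hypotheses. With $n=2d$, the condition $\frac1r+\frac dq=\frac d2$ becomes $\frac2r+\frac nq=\frac n2$, which is exactly the admissibility line in Proposition~\ref{prop:sp-stri}. The range $2\le q<\frac{2(2d+1)}{2d-1}$ becomes $2\le q<\frac{2(n+1)}{n-1}$. On this range, the Frank--Lewin--Lieb--Seiringer and Frank--Sabin theorem \cite{FLLS, FS_AJM} (see also \cite{BHLNS}) gives
\[
\Big\|\sum_j\lambda_j|e^{is\Delta}f_j|^2\Big\|_{L_s^{\frac r2}(\R;L_z^{\frac q2}(\R^n))}\lesssim\|\lambda\|_{\ell^{\frac{2q}{q+2}}}.
\]
This holds for all orthonormal $(f_j)\subset L^2(\R^n)$; note that $\frac{2q}{q+2}=\frac{2r}{r+n}$ on the admissible line, and that $q<\frac{2(n+1)}{n-1}$ corresponds to $r>n+1$, the regime where the Frank--Sabin exponent $\beta=\frac{2r}{r+n}$ is valid. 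We will double-check this parametrization against the statements in \cite{FS_AJM, BHLNS}, since it is the only nontrivial input. The endpoint $q=2$, $r=\infty$ is trivial, because there $\beta=1$ and the bound follows from unitarity and the triangle inequality.

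Next we apply Proposition~\ref{prop:sp-stri}. Restricting the free estimate to $s\in(0,\infty)$ and using the equality of norms yields the bound for $e^{-it\mathcal L}$ on $(0,\pi/2)$. Then, as in the remark following Proposition~\ref{prop:sp-stri}, we pass to any interval $t_0+(0,\pi/2)$ by replacing $f_j$ with $g_j=e^{-it_0\mathcal L}f_j$. This substitution preserves orthonormality because $e^{-it_0\mathcal L}$ is unitary.

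Finally, we cover $(-\pi,\pi)$ by the four intervals $-\pi+k\pi/2+(0,\pi/2)$, $k=0,1,2,3$. The omitted endpoints form a null set in $t$. We then use the triangle inequality (quasi-triangle inequality when $r/2<1$; here $r/2\ge1$ since $r\ge2$) in $L_t^{r/2}$ to sum the four bounds, which gives the claimed estimate with a constant at most four times the free one. The main obstacle is only bookkeeping: matching the exponent range and the Schatten exponent $\beta=\frac{2q}{q+2}$ to the precise form of the free orthonormal Strichartz theorem in dimension $n=2d$. The rest is a direct consequence of the exact identity.
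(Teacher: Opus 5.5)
Your proposal is correct and follows the paper's argument: invoke the Frank--Sabin estimate in dimension $n=2d$ with $\beta=\frac{2q}{q+2}$, transfer it to $(0,\frac\pi2)$ via Proposition~\ref{prop:sp-stri}, and extend to $(-\pi,\pi)$ by time translation and a finite covering. Your parenthetical reparametrization is wrong, though nothing depends on it: on the line $\frac2r+\frac nq=\frac n2$ one has $\frac{2q}{q+2}=\frac{nr}{nr-2}$, not $\frac{2r}{r+n}$, and $q<\frac{2(n+1)}{n-1}$ corresponds to $r>\frac{2(n+1)}{n}$, not $r>n+1$.
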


The study of orthonormal Strichartz estimates for the free Schr\"odinger
flow was initiated by Frank--Lewin--Lieb--Seiringer \cite{FLLS} and
further developed in \cite{FS_AJM,BHLNS,BLN_Forum}, with boundary cases
treated in \cite{BKS_TLMS}. 
More recently, Hadama--Toyoshima \cite{HT26} established
endpoint orthonormal Strichartz estimates with Lorentz sequence
norms in dimensions at least two.
Gong \cite{Gong26} obtained critical summability estimates
for the free and fractional Schr\"odinger propagators acting
on systems orthonormal in Sobolev spaces. Related estimates for the wave equation were obtained in \cite{BKS_RIMS}. Orthogonal Strichartz estimates have also been used to prove local smoothing estimates in modulation spaces for fractional Schr\"odinger flows \cite{BIS}.

For the present application, we use the corresponding free estimate
on the scaling critical line $\frac{2}{r}+\frac{n}{q}=\frac{n}{2}$ for $n=2d$, with summability exponent
$\beta=\frac{2q}{q+2}$.
Proposition~\ref{prop:sp-stri} therefore gives the estimate on $(0,\frac\pi2)$, and the preceding time-translation
argument extends it to $(-\pi,\pi)$.
The proof in \cite{GMS} proceeds through restriction estimates
for the special Hermite transform and the associated Schatten class estimates.
Here the result follows directly from the corresponding free Schr\"odinger
estimate through the exact identity in Proposition~\ref{prop:sp-stri}.

\subsection{Initial data in inhomogeneous Sobolev spaces}
\label{subsec:loc sch}

The preceding discussion concerns global Strichartz estimates on the
Schr\"odinger admissible line. Away from this line, one can still obtain
local-in-space-time estimates with Sobolev regularity by combining the known
orthonormal Strichartz estimates for the free Schr\"odinger equation with
Theorem~\ref{thm:equiv}.

More precisely, let $d\geq1$ and $r,q\geq2$ satisfy
\[
    \frac1r\leq \frac d2\Big(\frac12-\frac1q\Big).
\]
Fix $0<T_{\mathcal H}<\pi/4$ and set $I_{\mathcal H}=[0, T_{\mathcal H}]$.
Then, for
\[
    s>\frac d2-\frac dq-\frac2r,
\]
and for $\beta$ in the corresponding range of the orthonormal
Strichartz estimates for the free Schr\"odinger equation, subject to $1\le\beta\le2$ and $2\beta\le r$ as required by Theorem~\ref{thm:equiv}, one has
\[
\Big\|
    \sum_j\lambda_j |e^{-it\mathcal H}f_j|^2
\Big\|_{L_t^{\frac r2}(I_{\mathcal H};
L_x^{\frac q2}(B_1^d))}
\lesssim
\|\lambda\|_{\ell^\beta}
\]
for every orthonormal system $(f_j)\subset H^s(\mathbb R^d)$ and every
$(\lambda_j)\in\ell^\beta$.

The same local-in-space-time estimate holds for the special Hermite
propagator $e^{-it\mathcal L}$ with $d$ replaced by $2d$ and $I_{\mathcal H}$ replaced by
$I_{\mathcal L}=[0,T_{\mathcal L}]$, where $0<T_{\mathcal L}<\pi/2$, and with the free exponent range changed accordingly.
These statements follow from the corresponding free
Schr\"odinger estimates by the same localization argument used in the proof
of Theorem~\ref{thm:on-max-gen}, together with Theorem~\ref{thm:equiv}.
Related results for the $(k,a)$--Laguerre, Dunkl, and Dunkl--Hermite
operators can be found in \cite{MS23,GS}.

\appendix

\section{Verification of the $L_x^\beta$ form of Theorem \ref{thm:max}}
\label{sec:known}

\subsection{Frequency-localized endpoint estimates}

As mentioned in the introduction, the result of \cite{BKS_TLMS} underlying Theorem \ref{thm:max} is explicitly stated only for $q=2$. A careful reading of its proof shows that the case $q=2\beta$ can also be obtained. To justify the form of Theorem \ref{thm:max} used in Section \ref{sec:proof:on-gen}, we record the corresponding frequency-localized estimates.
The projections $P_k$ are those defined in Section \ref{sec:4}.

\begin{lem}[{\cite{BKS_TLMS}}]\label{lem:BKS}
Let $d\ge1$, $s\in \R$, $k\ge1$, and $\beta\in \{2,\infty\}$. Then
\begin{align*}
\Big\| \sup_{t\in I} \Big| \sum_j \lambda_j|e^{it\Delta} P_{k} \langle D\rangle^{-s}  g_j|^2 \Big| \Big\|_{L_x^{\beta} (B^d_1)} \lesssim C_\beta(k) 2^{2k(s(d,\beta)-s)}\|\lambda\|_{\ell^\beta}
\end{align*}
holds for all orthonormal initial data $(g_j)\subset L^2(\R^d)$
and all sequences $(\lambda_j)\in \ell^\beta$, where
$s(d,2)=\tfrac d4$ and $s(d,\infty)=\tfrac d2$, and $C_2(k)=k$, $C_\infty(k)=1$.
\end{lem}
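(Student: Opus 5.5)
The case $\beta=\infty$ reduces to Bessel's inequality, and the case $\beta=2$ to the dual Schatten formulation with a Hilbert--Schmidt computation of the $TT^*$ kernel.

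\textbf{Case $\beta=\infty$.} I will argue exactly as in Section~\ref{sec:proof:on-gen}. For fixed $(x,t)$ write
\[
e^{it\Delta}P_k\langle D\rangle^{-s}g_j(x)=(2\pi)^{-d}\langle \widehat{g_j},h_{x,t}\rangle,
\qquad
\overline{h_{x,t}}(\xi)=e^{ix\cdot\xi-it|\xi|^2}\phi(2^{-k}\xi)\langle\xi\rangle^{-s}.
\]
Bessel's inequality then gives
\[
\sum_j|e^{it\Delta}P_k\langle D\rangle^{-s}g_j(x)|^2\lesssim \|h_{x,t}\|_2^2\sim 2^{kd}2^{-2ks}=2^{2k(d/2-s)},
\]
uniformly in $(x,t)$. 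Taking the supremum over $t$ and $x$ and bounding $|\lambda_j|\le\|\lambda\|_{\ell^\infty}$ proves the claim with $C_\infty(k)=1$.

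\textbf{Case $\beta=2$.} By Lemma~\ref{lem:dual} with $(q,r)=(4,\infty)$, so that $(\tilde q,\tilde r)=(4,2)$, the estimate is equivalent to
\[
\|WT_kT_k^*\overline W\|_{\mathcal C^2}\lesssim k\,2^{2k(d/4-s)}\|W\|_{L^4_xL^2_t}^2,
\qquad T_k=e^{it\Delta}P_k\langle D\rangle^{-s},
\]
for $W$ supported in $B_1^d\times I$. Since the $\mathcal C^2$ norm is the Hilbert--Schmidt norm, it equals the $L^2$ norm of the kernel
\[
W(x,t)K_k(x-x',t-t')\overline{W(x',t')},
\qquad
K_k(x,t)=\int e^{ix\xi-it|\xi|^2}\phi^2(2^{-k}\xi)\langle\xi\rangle^{-2s}\,d\xi .
\]
The key pointwise bound on this kernel is
\[
|K_k(x,t)|\lesssim 2^{-2ks}\min\{2^{kd},|t|^{-d/2}\},
\]
with rapid decay when $|x|\gg 2^k|t|$. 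I will then estimate
\[
\iint |W(x,t)|^2|K_k|^2|W(x',t')|^2 .
\]
First integrate in $t,t'$, using $\sup_{t,t'}|K_k|^2$, where the supremum is over pairs with spatial separation $|x-x'|$. This reduces matters to
\[
\iint F(x)F(x')\,G_k(x-x')\,dx\,dx',
\qquad F=\|W(x,\cdot)\|_{L^2_t}^2,
\]
where $G_k(y)=\sup_{|t|\lesssim1}|K_k(y,t)|^2$. Because of the decay for $|y|\gg 2^k|t|$, the relevant times are $|t|\gtrsim 2^{-k}|y|$, which gives
\[
G_k(y)\lesssim 2^{-4ks}\min\{2^{2kd},(2^{-k}|y|)^{-d}\}.
\]
On $B_1^d$ this has $L^1$ norm about $2^{-4ks}2^{kd}\log 2^k\sim 2^{-4ks}2^{kd}k$. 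Young's inequality with $F\in L^2_x$, together with $\|F\|_{L^2}=\|W\|_{L^4_xL^2_t}^2$, yields
\[
\|WT_kT_k^*\overline W\|_{\mathcal C^2}^2\lesssim k\,2^{kd-4ks}\|W\|^4 .
\]
Taking square roots gives $\sqrt{k}\,2^{2k(d/4-s)}\|W\|_{L^4_xL^2_t}^2$, which is at most the stated $k\,2^{2k(d/4-s)}\|W\|^2$.

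\textbf{Main obstacle.} The step I expect to need the most care is justifying the bound for $G_k$ with a supremum in time inside the $L^1_y$ integral, that is, the uniform dispersive and nonstationary phase estimate for the frequency-localized kernel across all $|t|\le1$. This follows from stationary phase in the regime $|y|\sim 2^k|t|$ and integration by parts otherwise, as in Lemma~\ref{lem:ker}. The loss $k$, or $\sqrt{k}$ in the argument above, comes from the logarithmic divergence $\int_{2^{-k}\le|y|\le 1}|y|^{-d}\,dy$. This is consistent with the constant $C_2(k)=k$ in \cite[(6.13)--(6.14)]{BKS_TLMS}.
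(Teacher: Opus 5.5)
Your proof is correct, and it takes a somewhat different route from the paper's sketch.

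\textbf{The paper's route.} The paper works with $\widetilde U_k=e^{it\Delta}P_k|D|^{-s}$ and splits $\widetilde T_k=\widetilde U_k\widetilde U_k^*$ into pieces $\widetilde T_{k,l}$, localized dyadically in the time separation $|t-t'|$ as in \cite{BKS_TLMS}. It bounds each piece in $\mathcal C^2$ by the second line of \eqref{e:BKS} and adds the pieces with the triangle inequality. About $k$ of them have the same size $2^{(\frac d2-2s)k}$, which is the source of $C_2(k)=k$. The case $\beta=\infty$ is read off from the $\mathcal C^1$ bound, and $|D|^{-s}$ is then exchanged for $\langle D\rangle^{-s}$ via Lemma~\ref{lem:TB-T1}.

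\textbf{Your route.} You keep $\langle D\rangle^{-s}$ inside the kernel. This is legitimate, since after rescaling the amplitude $\phi^2(\xi)\langle 2^k\xi\rangle^{-2s}$ is $2^{-2ks}$ times a symbol bounded in $C^\infty$ uniformly in $k$. You should state this explicitly, because it is what makes your stationary and nonstationary phase bounds uniform in $k$. For $\beta=\infty$ you use Bessel's inequality; by duality this is equivalent to the $\mathcal C^1$ bound, and it is the same device used in Subsection~\ref{sec:proof:on-gen}. For $\beta=2$ you compute the entire Hilbert--Schmidt norm at once through the time-maximal kernel $G_k$ and Young's inequality.

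\textbf{What each buys.} In your computation the logarithm $\int_{2^{-k}\le|y|\le 2}|y|^{-d}\,dy$ ends up under a square root, so you get $\sqrt k$ instead of $k$. In effect you sum the squares of the Hilbert--Schmidt norms of the dyadic time pieces, whose kernels have boundedly overlapping supports, whereas the triangle inequality in the paper sums the norms themselves. Your argument is self-contained and slightly sharper; the paper's is a short citation of existing estimates. The improvement is immaterial for Corollary~\ref{cor:sch-all}, where any $2^{\varepsilon k}$ loss is absorbed.

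\textbf{The kernel bound you flag as the main obstacle.} The dispersive part is cleanest via $K_k=(e^{-it|\cdot|^2})^\vee*\check a$ with $\|\check a\|_{L^1}\lesssim 2^{-2ks}$. Nonstationary phase is only needed when $|t|\ll 2^{-k}|y|$. There the phase gradient is $\gtrsim 2^k|y|$ and dominates the second derivatives, which are of size $\sim 2^{2k}|t|$, so repeated integration by parts gives the rapid decay.
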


\begin{proof}[Sketch of proof of Lemma \ref{lem:BKS}]
For $k\ge1$, first set $\widetilde U_k=e^{it\Delta}P_k|D|^{-s}$ and $\widetilde T_k=\widetilde U_k\widetilde U_k^*$.
On $B_1^d\times I$, decompose $\widetilde T_k=\sum_{l\ge0}\widetilde T_{k,l}$ as in \cite[(6.13)--(6.15)]{BKS_TLMS}. The estimates there imply the bounds 
\begin{equation}\label{e:BKS}
\begin{aligned}
\big\| W\widetilde T_k \overline W\big\|_{\mathcal C^1} &\lesssim 2^{(d-2s)k} \|W\|_{L_x^2L_t^2}^2,\\
\big\| W\widetilde T_{k,l} \overline W \big\|_{\mathcal C^2} &\lesssim (1+2^{k-l})^{-\frac d2} 2^{(d-2s)k}2^{-\frac d2 l} \|W\|_{L_x^4L_t^2}^2.
\end{aligned}    
\end{equation}
Summing the second estimate in \eqref{e:BKS} over all $l\ge0$ yields
\begin{align*}
\big\| W\widetilde T_k \overline W \big\|_{\mathcal C^2} &\lesssim 
\Big(\sum_{0\le l \le k} (2^{k-l})^{-\frac d2} 2^{(d-2s)k}2^{-\frac d2 l} +\sum_{l>k} 2^{(d-2s)k}2^{-\frac d2 l} \Big)
\|W\|_{L_x^4L_t^2}^2 \\
&\lesssim k2^{(\frac d2-2s)k}\|W\|_{L_x^4L_t^2}^2.
\end{align*}
Now choose a smooth Fourier multiplier $\widetilde P_k$ with uniformly bounded symbol supported where $|\xi|\sim2^k$ and satisfying $P_k\widetilde P_k=P_k$. Set $Q_k=|D|^s\langle D\rangle^{-s}\widetilde P_k$.
Then $U_k:=e^{it\Delta}P_k\langle D\rangle^{-s}=\widetilde U_kQ_k,$ with $\sup_{k\ge1}\|Q_k\|_{\mathcal B(L^2)}<\infty$.
Hence by Lemma \ref{lem:TB-T1}, 
\[
\begin{aligned}
\|WU_kU_k^*\overline W\|_{\mathcal C^p}
=\|WU_k\|_{\mathcal C^{2p}}^2
\le \|Q_k\|_{\mathcal B(L^2)}^2
\|W\widetilde T_k\overline W\|_{\mathcal C^p},
\end{aligned}
\]
for $p=1,2$.
By  Lemma \ref{lem:dual}, these estimates give 
\begin{equation*}
\begin{aligned}
&\big\| \sup_{t\in I} \big| \sum_j \lambda_j|e^{it\Delta} P_k \langle D\rangle^{-s}  g_j|^2 \big| \big\|_{L_x^{\infty}(B_1^d)} \lesssim 2^{(d-2s)k}\|\lambda\|_{\ell^\infty}\\
&\big\| \sup_{t\in I} \big| \sum_j \lambda_j|e^{it\Delta} P_k \langle D\rangle^{-s}  g_j|^2 \big| \big\|_{L_x^{2}(B^d_1)} \lesssim k2^{(\frac d2-2s)k}\|\lambda\|_{\ell^2},
\end{aligned}
\end{equation*}
as claimed.
\end{proof}

\subsection{Interpolation and frequency summation}

We now interpolate the endpoint estimates and sum the frequency localized estimates to obtain the inhomogeneous Sobolev estimate used in Section \ref{sec:proof:on-gen}.

\begin{cor}\label{cor:sch-all}
Let $d\ge1$, $\beta \in [1,\infty]$, and $s>s(d,\beta)$ (cf. \eqref{s0nbeta}). Then
\begin{align}\label{corollary goal}
\Big\| \sup_{t\in I} \Big| \sum_j \lambda_j|e^{it\Delta}  \langle D\rangle^{-s}  g_j|^2 \Big| \Big\|_{L_x^{\beta}(B_1^d)} \lesssim \|\lambda\|_{\ell^\beta},
\end{align}
holds for all orthonormal initial data $(g_j) \subset L^2(\R^d)$ and all sequences $(\lambda_j)\in \ell^\beta$.
\end{cor}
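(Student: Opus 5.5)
We prove Corollary~\ref{cor:sch-all} in three steps: obtain frequency-localized estimates for every $\beta$, sum them over $k$, and use the single-function maximal theorems to cover $\beta$ near $1$. The low frequency piece $P_0$ is handled by Bessel's inequality exactly as in the proof of Theorem~\ref{thm:on-max-gen}. There $\phi_0(\xi)\langle\xi\rangle^{-s}\in L^2$, and $\|\lambda\|_{\ell^\infty}\le\|\lambda\|_{\ell^\beta}$, so the $P_0$ contribution is bounded in $L^\infty_x$ and hence in $L^\beta_x(B_1^d)$. It therefore suffices to show that, for each $k\ge1$,
\[
\Big\| \sup_{t\in I} \Big| \sum_j \lambda_j|e^{it\Delta} P_{k} \langle D\rangle^{-s}  g_j|^2 \Big| \Big\|_{L_x^{\beta} (B^d_1)} \lesssim_\varepsilon 2^{2k(s(d,\beta)+\varepsilon-s)}\|\lambda\|_{\ell^\beta}
\]
for every $\varepsilon>0$. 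Once this holds, Minkowski's inequality for the square roots, as in \eqref{eq:Minkowski}, sums the geometric series in $k$ whenever $s>s(d,\beta)$.

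\emph{Case $2\le\beta\le\infty$.} We interpolate the two endpoints of Lemma~\ref{lem:BKS}, and the losses in $k$ are harmless. Fix $k$ and pass to the dual formulation via Lemma~\ref{lem:dual} with $(q,r)=(2\beta,\infty)$. The endpoint bounds become $\|WU_k\|_{\mathcal C^4}\lesssim k^{1/2}2^{k(d/4-s)}\|W\|_{L^4_xL^2_t}$ and $\|WU_k\|_{\mathcal C^2}\lesssim 2^{k(d/2-s)}\|W\|_{L^2_xL^2_t}$, where $U_k=e^{it\Delta}P_k\langle D\rangle^{-s}$. Since the operator is fixed, we use bilinear/Stein complex interpolation in $W$ exactly as in the proof of Theorem~\ref{thm:on-max-gen}. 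The analytic family $W\mapsto W|W|^{z-1}\cdots$ is unnecessary here because $s$ is fixed and only the constants differ. This gives $\|WU_k\|_{\mathcal C^{2\beta'}}\lesssim k^{1/\beta}2^{k(\frac d2(1-\frac1\beta)-s)}\|W\|_{L^{2\beta'}_xL^2_t}$. Duality then returns the $L^\beta_x$ maximal bound with constant $k^{2/\beta}2^{2k(s(d,\beta)-s)}$, and the factor $k^{2/\beta}$ is absorbed into $2^{2k\varepsilon}$.

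\emph{Case $1\le\beta\le2$.} The required bound is now $s(d,\beta)=\max\{\frac d2(1-\frac1\beta),\frac{d}{2(d+1)}(d-\frac{d-1}\beta)\}$. The second expression is the linear interpolation in $1/\beta$ between $\frac{d}{2(d+1)}$ at $\beta=1$ and $\frac d4$ at $\beta=2$. The $\beta=1$ endpoint comes from the single-function maximal estimate: Theorem~\ref{t:max schrodinger single} for $d\ge2$, and Carleson's theorem for $d=1$, where $\frac14=\frac d{2(d+1)}$. Indeed, for $\beta=1$ the triangle inequality gives $\|\sup_t|\sum_j\lambda_j|U_kg_j|^2|\|_{L^1_x}\le\sum_j|\lambda_j|\,\|\sup_t|U_kg_j|\|_{L^2_x}^2\lesssim 2^{2k(\frac d{2(d+1)}+\varepsilon-s)}\|\lambda\|_{\ell^1}$. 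This uses $\|P_k\langle D\rangle^{-s}g\|_{H^{s'}}\lesssim 2^{k(s'-s)}\|g\|_2$ with $s'=\frac d{2(d+1)}+\varepsilon$.

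In the dual form this is a $\mathcal B(L^2)=\mathcal C^\infty$ bound, $\|WU_k\|_{\mathcal C^\infty}\lesssim 2^{k(\frac d{2(d+1)}+\varepsilon-s)}\|W\|_{L^2_xL^\infty_t}$, with $\tilde q=2$ and $\tilde r=2$ replaced appropriately by $(q,r)=(2,\infty)$. Interpolating this with the $\beta=2$ endpoint $\mathcal C^4$, $L^4_xL^2_t$ gives $\mathcal C^{2\beta'}$ with $L^{2\beta'}_xL^2_t$. The exponent becomes $(1-\vartheta)\frac d{2(d+1)}+\vartheta\frac d4$ with $\vartheta=2-\frac2\beta$, which equals $\frac{d}{2(d+1)}(d-\frac{d-1}\beta)$. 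Since $\frac d2(1-\frac1\beta)$ is never larger than this on $[1,2]$, this is exactly $s(d,\beta)$, again up to $\varepsilon$ and $k$ factors. Summation over $k$ then finishes the proof.

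The main technical point is checking that the dual spaces match for complex interpolation. The norms $L^{\tilde q}_xL^{\tilde r}_t$ have $\tilde r=2$ at both endpoints, since $r=\infty$, and the mixed-norm interpolation identity $[L^{a_0}_xL^2_t,L^{a_1}_xL^2_t]_\vartheta=L^{a}_xL^2_t$ is standard. One must also justify interpolating $W\mapsto WU_k$ into Schatten classes including $\mathcal C^\infty$, which is \cite[Theorem 2.9]{Simon}. Everything else is routine.
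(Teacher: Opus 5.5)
Your proposal is correct and follows the paper's proof. It uses the same ingredients: frequency-localized bounds at $\beta\in\{1,2,\infty\}$ (the single-function maximal estimate with the triangle inequality, and Lemma~\ref{lem:BKS}), interpolation in the dual Schatten formulation of Lemma~\ref{lem:dual}, Bessel's inequality for $P_0$, and Minkowski summation over $k$. The only cosmetic difference is that you keep $\langle D\rangle^{-s}$ inside the localized operator, whereas the paper proves the $s=0$ bound with $2^{2ks_*}$ and then inserts $Q_k=2^{ks}\langle D\rangle^{-s}\widetilde P_k$ via Lemma~\ref{lem:TB-T1}.

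One slip to fix: at $\beta=1$ you have $(q,r)=(2,\infty)$, hence $(\tilde q,\tilde r)=(\infty,2)$. The dual endpoint is therefore $\|WU_k\|_{\mathcal B(L^2)}\lesssim 2^{k(\frac{d}{2(d+1)}+\varepsilon-s)}\|W\|_{L^\infty_xL^2_t}$, not the bound with $\|W\|_{L^2_xL^\infty_t}$. This corrected endpoint is exactly what your choice $\vartheta=2-\frac2\beta$ and target space $L^{2\beta'}_xL^2_t$ require.
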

\begin{proof}
Fix $s$ and $\beta$ as in Corollary \ref{cor:sch-all} and choose $s_*$ so that $s(d,\beta)<s_*<s$.
We first obtain the required frequency-localized estimate
\begin{align}\label{claim-S}
\Big\| \sup_{t\in I} \Big| \sum_j \lambda_j \big| e^{it\Delta} P_{k} g_j \big|^2 \Big| \Big\|_{L_x^{\beta}(B_1^d)} \le C  2^{2ks_*}\|\lambda\|_{\ell^\beta}
\end{align}
for all $k\ge0$, and any orthonormal system $(g_j)\subset L^2(\R^d)$, and any $(\lambda_j)\in \ell^\beta$. Note that $f_j=\langle D\rangle^{-s}g_j$ form an orthonormal family in $H^s(\R^d)$.
Once \eqref{claim-S} is proved,
we apply Lemma \ref{lem:TB-T1} and sum over $k\ge0$ to obtain \eqref{corollary goal}.

When $\beta=1$, the single function maximal estimate (Theorem~\ref{t:max schrodinger single}) and the triangle inequality give, for every $\tau>\frac d{2(d+1)}$
\begin{align*}
    &\Big\| \sup_{t\in I} \big|\sum_j \lambda_j |e^{it\Delta} P_k g_j|^2\big| \Big\|_{L^1_x(B^d_1)}
    \lesssim \sum_j |\lambda_j|  \Big\| \sup_{t\in I} \big|e^{it\Delta} P_k g_j\big| \Big\|^2_{L^2_x(B^d_1)}
    \lesssim 2^{2k\tau}\|\lambda\|_{\ell^1}.
\end{align*}
Taking $s=0$ in Lemma \ref{lem:BKS} gives the corresponding estimates at $\beta=2$ and $\beta=\infty$.
Interpolating via Lemma \ref{lem:dual}, we obtain, for every $\varepsilon>0$,
\begin{align*}
\Big\|\sup_{t\in I}\Big|\sum_j\lambda_j
 |e^{it\Delta}P_kg_j|^2\Big|\Big\|_{L_x^\beta(B_1^d)}
\lesssim_\varepsilon
2^{2k(s(d,\beta)+\varepsilon)}\|\lambda\|_{\ell^\beta}.
\end{align*}

Here, interpolation between $\beta=1$ and $\beta=2$ gives $s(d,\beta)=\frac{d}{2(d+1)}(d-\frac{d-1}{\beta})$ for $1\le\beta\le2$, while interpolation between $\beta=2$ and $\beta=\infty$ gives $s(d,\beta)=\frac d2(1-\frac1\beta)$ for $2\le\beta\le\infty$. Since $\varepsilon>0$ is sufficiently small, we obtain \eqref{claim-S} for $k\ge1$. The case $k=0$ follows directly from the Bessel's inequality and $\|\lambda\|_{\ell^\infty} \le \|\lambda\|_{\ell^\beta}$.

For $k\ge1$, choose a smooth Fourier multiplier $\widetilde P_k$ such that $P_k\widetilde P_k=P_k$ whose symbol is supported on $\{C^{-1}2^k\le |\xi|\le C2^k\}$ for some $C>1$. Set $Q_k=2^{ks}\langle D \rangle^{-s}\widetilde P_k$. Then $\sup_{k\ge1}\|Q_k\|_{\mathcal B(L^2)}<\infty$.
Applying Lemma \ref{lem:TB-T1} to \eqref{claim-S}, and treating $k=0$ similarly, yields
\begin{align}\label{ccc}
\Big\| \sup_{t\in I} \Big| \sum_j \lambda_j |e^{it\Delta}P_k\langle D\rangle^{-s}g_j|^2 \Big| \Big\|_{L_x^\beta(B_1^d)}
\lesssim 2^{2k(s_*-s)}\|\lambda\|_{\ell^\beta},\qquad k\ge0.
\end{align}
It remains to sum over the frequency scales $k\ge0$.
Applying Minkowski's inequality as in \eqref{eq:Minkowski}
and using \eqref{ccc} together with $s>s_*$, we obtain
\eqref{corollary goal}, completing the proof.
\end{proof}

\section*{Acknowledgements} 
This work was supported by G-LAMP RS-2024-00443714, RS-2024-00339824 (E. Jeong and H. Ko); JBNU research funds for newly appointed professors in 2024 and RS-2026-25471967 (H. Ko); the Croatian Science Foundation under the project number HRZZ-IP-2022-10-5116 (FANAP) (S. Shiraki). The first and second authors would like to thank Chu-Hee Cho for valuable discussions on the twisted Laplacian.

\section*{AI Statement}
The authors used ChatGPT (OpenAI, ChatGPT-5.5 Pro) to clarify their understanding of Schatten spaces and to assist with language editing and improving the readability of the manuscript.

\end{document}